\documentclass[11pt]{amsart}

\usepackage[T1]{fontenc}
\usepackage[utf8x]{inputenc}
\usepackage[pagewise]{lineno} %\linenumbers

\usepackage{amsmath, amssymb}
\usepackage{enumerate}
\usepackage[T1]{fontenc}

\usepackage{theoremref}
\usepackage{mathrsfs}
\usepackage{hyperref}

\usepackage{tikz}
\usepackage{times}
\usepackage{color}
\usepackage{amsmath}
\usepackage{amssymb}
\usepackage{amsthm}
\usepackage{enumerate}
\usepackage{amsbsy}
\usepackage{amsfonts}
\topmargin -0.6in \textheight 9.5 in \flushbottom
\setlength{\textwidth}{6.75in} % 8.5in - 1in margin on each side = 6.5in
\setlength{\oddsidemargin}{.0in} % gives 1+.25 in left margin, page 163
\setlength{\evensidemargin}{.0in}% gives 1+.25 in left margin, page 163
\newtheorem{theo}{Theorem}[section]
\newtheorem{defin}[theo]{Definition}
\newtheorem{prop}[theo]{Proposition}

\newtheorem{lemm}[theo]{Lemma}
\newtheorem{rem}[theo]{Remark}

\newtheorem{assumption}[theo]{Assumption}

\newcommand{\al}{\alpha}

\newcommand{\Ga}{\Gamma}

\newcommand{\om}{\omega}

\newcommand{\ep}{\epsilon }

\newcommand{\De}{\Delta}
\newcommand{\de}{\delta}

\newcommand{\pa}{\partial}

\newcommand{\R}{{\mathbb R}^n}
\newcommand{\hR}{{\mathbb R}^n_+}

\newcommand{\Rn}{{\mathbb R}^{n-1}}

\newcommand{\na}{\nabla}

\newcommand{\calC}{{ \mathcal C }}

\newcommand{\bke}[1]{\left( #1 \right)}

\newcommand{\bket}[1]{\left\{ #1 \right\}}
\newcommand{\norm}[1]{\left\Vert #1 \right\Vert}
\newcommand{\abs}[1]{\left| #1 \right|}

\newcommand{\calD}{{ \mathcal D  }}
\newcommand{\calS}{{ \mathcal S  }}
\newcommand{\calT}{{ \mathcal T  }}

\newenvironment{pflem22}{{\par\noindent
            \textbf{Proof of Lemma \ref{lemma240109}}\quad}}{\qed}
            
\newenvironment{pflem24}{{\par\noindent
            \textbf{Proof of Lemma \ref{lemma0709-1}}\quad}}{\qed}

\begin{document}
\baselineskip=15pt

\title[ ]{Singular velocity of the Stokes and Navier-Stokes equations near boundary in the half space}

\maketitle

\begin{center}
  \normalsize
%  \authorfootnotes
  Tongkeun Chang \footnote{Department of Mathematics,
   Yonsei University Seoul, 03722,
    South Korea, \,\, chang7357@yonsei.ac.kr } and Kyungkeun Kang \footnote{Department of Mathematics,
   Yonsei University Seoul, 03722,
    South Korea, \,\, kkang@yonsei.ac.kr}%\textsuperscript{2}
 % \par
 % \vspace{3mm}%\footnote{Author A}\textsuperscript{1},
  %\textsuperscript{1}
 % {Department of Mathematics,\\
  % Yonsei University Seoul, 136-701,\\
  %  South Korea,}

\end{center}

\begin{abstract}

Local behaviors near boundary are analyzed for solutions of the Stokes and Navier-Stoke equations in the half space with localized non-smooth boundary data.
We construct solutions of Stokes equations whose velocity field is not bounded near boundary away from the support of boundary data, although velocity and gradient velocity of solutions are locally square integrable. This is an improvement compared to known results in the sense that velocity field is unbounded itself, since previously constructed solutions were bounded near boundary, although their normal derivatives are singular.
We also establish singular solutions and their derivatives that  do not belong to $L^q_{\rm{loc}}$ near boundary with $q> 1$.
For  such examples, there corresponding pressures turn out not to be locally integrable.
Similar construction via a perturbation argument is available to the Navier-Stokes equations near boundary as well.
\\
\\
\noindent 2020  {\em Mathematics Subject Classification.}  primary
35Q30,
secondary 35B65.

\noindent {\it Keywords and phrases: Singualr velocity Stokes and Navier-Stokes equations,  local regularity near boundary}

\end{abstract}

%\maketitle

\section{Introduction}

In this paper, we study local regularity of the Stokes and the Navier-Stokes equations  near flat boundary.
Let $B^+_{r}:=\{ x=(x', x_n )\in \Rn \times {\mathbb R}: |x|<r, x_n >0\}$, $ n \geq 2$. We consider the Stokes system near local boundaries.
\begin{equation}\label{Stokes-10}
u_t - \De u+  \na \pi =0\qquad \mbox{div } u =0 \quad \mbox{ in }
\,Q_2^+:=B^+_{2}\times (0, 4),
\end{equation}
where  $ u$  indicates the velocity of the fluid and $\pi$ is the associated pressure. We emphasize that no-slip boundary condition is given only on the flat boundary, i.e.
\begin{equation}\label{bddata-20}
u=0 \quad \mbox{ on } \,\Sigma:=(B_{2}\cap\{x_n=0\})\times (0,4).
\end{equation}
Similar situation can be examined  for the Navier-Stokes equations, namely
\begin{equation}\label{nse-30}
u_t - \De u +  (u\cdot\nabla) u+\nabla  \pi =0,\qquad \mbox{div } u=0 \quad \mbox{ in }
\,Q_2^+
\end{equation}
together with the boundary condition \eqref{bddata-20}.

Unlike the heat equation, due to non-local effect of incompressible fluid flow, we cannot expect local smoothing property for \eqref{Stokes-10} or \eqref{nse-30} near boundary.
Indeed, the first author constructed a weak solution of \eqref{Stokes-10} whose normal derivatives near boundary are unbounded. To be more precise, the singular solution, denoted by $ u$, constructed in \cite{Kang05} is locally bounded weak solutions in $Q_1^+$, namely,
\begin{equation}\label{bound-weak}
\norm{u}_{L^{\infty}(Q_1^+)}+\norm{u}_{L^2(Q_1^+)}<\infty,\qquad \norm{u}_{L^{\infty}(Q_1^+)}=\infty.
\end{equation}
In \cite{KLLT22}, such construction was extended to the Navier-Stokes equations as well and various types of blow-up rates were established near boundary.
We remark that constructed singular solutions in \cite{Kang05} and \cite{KLLT22} are due to nonlocal effect of solutions in the half-space  caused by compactly supportd singular boundary data.

Meanwhile,  in \cite{Seregin-Sverak10}, Seregin and \u{S}ver\'ak constructed a H\"{o}lder continuous solution up to boundary in the form of shear flow whose
 normal derivatives are, however, unbounded near boundary in the half-space. It is remarkable that such a solution
satisfies homogeneous initial and boundary conditions and the solution, however, is not of finite energy in the half space in contrast to ones constructed in \cite{Kang05} and \cite{KLLT22}.

 On the other hand, a certain type of local maximal regularity was proved near boundary via a priori estimate.
 More precisely, in \cite[Proposition 2]{Seregin00} it was shown in three dimensions that
for given $p, q\in (1,2]$ the solution of the  Stokes equations \eqref{Stokes-10}-\eqref{bddata-20} satisfies the following a priori estimate:
For any  $r$ with $p\le r<\infty$
\begin{equation}\label{Seregin-50}
\norm{u_t}_{L^{q}_t L^r_{x}(Q_{\frac{1}{4}}^+)}+\norm{\nabla^2 u}_{L^{q}_t L^r_{x}(Q_{\frac{1}{4}}^+)}+\norm{\nabla \pi}_{L^{q}_t L^r_{x}(Q_{\frac{1}{4}}^+)}
\le C\bke{\norm{ u}_{L^{q}_t W^{1,p}_{x}(Q_{1}^+)}+\norm{\pi}_{L^{q}_t L^p_{x}(Q_{1}^+)}}.
\end{equation}
Furthermore, due to parabolic embedding, it follows that
\begin{equation}\label{Seregin-100}
\norm{u}_{\calC^{\frac{\alpha}{2}}_t \calC^{\alpha}_{x}(Q_{\frac{1}{4}}^+)}
< \infty,\qquad  0< \alpha <2-\frac{2}{q}.
\end{equation}
It is worth to mention that the estimates \eqref{Seregin-50} is optimal, since the authors constructed in \cite{CK23}
an example that satisfies \eqref{Seregin-50}  but any higher integrability of the example fails (see also \cite{KLLT22}). 

We remark very recently that local boundary regularity of solutions was improved for the Stokes system, when boundary condition is given as Navier-boundary condition, not no-slip boundary condition (see \cite{CLT}).

 So far, singular solutions that has been constrructed in above references are bounded, in fact, H\"{o}lder continuous  but the normal derivatives of velocity are singular near boundary in the half space.
One may wonder whether or not there exists a singular solution that is unbounded itself near boundary. 
Our main objective is to construct a weak solution of the Stokes and Navier-Stokes equations sucht that velocity field is singular near boundary in the half space  (the definition of weak  solutions of Stokes equations is given in section \ref{prelim}).
Furthermore, we are also interested in establishing an example so that velocity fileld is bounded but it does not belong to $\calC^{\alpha}$, class  of H\"{o}lder  continuous functions, with any exponent $\alpha>0$ near boundary.
We remark that since we are looking for solutions that are not H\"{o}lder  continuous functions itself, due to above a priori estimate \eqref{Seregin-50}, the associated pressure is unlikely to belong to $L^p_{\rm loc}$, $p>1$, which turns out to be the case in examples of our construction.
Our main resutls will be precisely stated in the next section.

%\section{Stokes equations with boundary data in a half-space}
\section{Main results}
\label{SS-half}
\setcounter{equation}{0}

%We consider  the non-stationary Stokes equations near flat boundary
%\begin{equation}\label{Stokes-100}
%u_t - \De u +  \na \pi =0\qquad \mbox{div } u =0 \quad \mbox{ in }
%\,B^+_{2}\times (0, 4),
%\end{equation}
%where  $B^+_{r}:=\{ x=(x', x_n )\in \Rn \times {\mathbb R}: |x|<r, x_n >0\}$, $ n \geq 2$. Here, no-slip boundary condition is given only on the flat boundary, i.e.
%\begin{equation}\label{bddata-200}
%u=0 \quad \mbox{ on } \,\Sigma:=(B_{2}\cap\{x_n=0\})\times (0,4).
%\end{equation}

%We can also consider similar situation for the Navier-Stokes equations, i.e.
%\begin{equation}\label{nse-300}
%u_t - \De u +  (u\cdot\nabla) u+\nabla  \pi =0,\qquad \mbox{div } u =0 \quad \mbox{ in }
%\,B^+_{2}\times (0, 4)
%\end{equation}
%with the boundary condition \eqref{bddata-200}.

For convenience, we introduce a tensor $L_{ij}$ defined by
\begin{align}\label{L-tensor}
L_{ij} (x,t) & =  D_{x_j}\int_0^{x_n} \int_{\Rn}   D_{z_n}
\Ga(z,t) D_{x_i}   N(x-z)  dz,\quad
i,j=1,2,\cdots, n,%\\
%\notag A(x,t) & =\int_{\Rn}\Ga(z^{\prime},0,t)N(x^{\prime}-z^{\prime},x_n)dz^{\prime}.
\end{align}
where $\Ga$ and $N$ are  the $n $ dimensional Gaussian kernel and  Newtonian kernel defined by
\begin{align}\label{H-L-10}
\Ga(x,t) = \left\{ \begin{array}{cc}\vspace{2mm}
\frac1{(4\pi t)^{\frac{n}2}} e^{-\frac{ |x|^2}{4t}}& \quad t > 0,\\
0 &\quad t < 0,
\end{array}
\right.
\quad N(x) = \left\{\begin{array}{ll}\vspace{2mm}
-\frac1{n(n -2)\om_n} |x|^{-n +2} &\quad n \geq 3,\\
\frac1{2\pi} \ln |x| &  \quad n =2.
\end{array}
\right.
\end{align}
It is known that $L_{ij}$ satisfies following relations:
%From the direct calculation, we get the following relations on $L_{ij}$:
\begin{align} \label{1006-3}
\sum_{i=1}^{n} L_{ii} = \frac12 D_{x_n} \Ga, \quad L_{ij} = L_{ji} \,\, 1 \leq i,j \leq n-1, \quad   L_{in} =
L_{ni}  + B_{in} \,\, \mbox{ if }\, i \neq n,
\end{align}
where
\begin{equation}\label{B-tensor}
B_{in}(x,t) = \int_{\Rn}D_{x_n} \Ga(x^\prime -z^\prime ,
x_n, t) D_{z_i} N( z^{\prime},0) dz^\prime.%  =D_{x_n}R'_i  \Gamma (\cdot, x_n, t)(x').
\end{equation}
Furthermore, we remind an estimate of $L_{ij}$ defined in
\eqref{L-tensor} (see (2.6) in \cite{Sol02})
\begin{equation}\label{est-L-tensor}
|D^{l_0}_{x_n} D^{k_0}_{x'}  L_{ij}(x,t)| \leq
\frac{c}{t^{  \frac12} (|x|^2 +t )^{\frac12 n + \frac12 k_0}
(x_n^2 +t)^{\frac12 l_0}},
\end{equation}
where $ 1 \leq  i , j\leq n$ except for the case that $i \neq n$ and $j=n$.

We recall the following Stokes equations in a  half space:
\begin{align}\label{StokesRn+}
\begin{split}
& w_t - \De w +  \na p =0,\qquad \mbox{div }\, w =0 \quad \mbox{ in }
\, \R_+ \times (0,\infty),\\
& \qquad\quad \displaystyle w|_{t =0} =0, \qquad w|_{x_n =0} =g.
\end{split}
\end{align}
It is known that $w$ is represented by
\begin{align}\label{rep-bvp-stokes-w}
w_i(x,t) & = \sum_{j=1}^{n}\int_0^t \int_{\Rn} K_{ij}(
x^{\prime}-y^{\prime},x_n,t-s)g_j(y^{\prime},s) dy^{\prime}ds,
\end{align}
where  the Poisson kernel $K $ of the Stokes equations in $\R_+\times (0, \infty)$ is  given as follows   (see \cite{Sol02}):
\begin{align}\label{Poisson-tensor-K}
\begin{split}
  K_{ij}(x'-y',x_n,t) &  =  -2 \delta_{ij} D_{x_n}\Ga(x'-y', x_n,t)
+4L_{ij} (x'-y',x_n,t)\\
& \qquad +2  \de_{jn} \de(t)  D_{x_i} N(x'-y',x_n).
\end{split}
\end{align}
The formula of the associated pressure $p$ is also given as 
\begin{align}\label{representationpressure}
\begin{split}
p(x,t) = & - \sum_{k =1}^{k =n}\int_{\Rn} D_{x_k} D_{x_n} N(x' -y', x_n)    g_k(y', t) dy'  \\
& +    2\sum_{k =1}^{k=n}
 (D_t-\De') \int_0^t  \int_{\Rn} g_k (z',s) \int_{\Rn} D_{y_k} N(y', x_n) \Ga(x'-y' -z',0,t-s) dy' dz'ds\\
&  + N*' D_t g_n (x,t),
\end{split}
\end{align}

We specify a boundary data $g:\mathbb R^{n-1}\times \mathbb
R_+\rightarrow  \R$ with only non-zero $n-$th component defined
as follows:
\begin{align}\label{0502-6}
g(y', s) = \al (0, \cdots, 0,  g_n (y',s))=\al (0, \cdots, 0,g^{\calS}_n (y')g^{\calT}_n(s)),
\end{align}
where $\al > 0$ is a parameter, which will be taken sufficiently small for the case of 
the Navier-Stokes equations. Here $g_n^{\mathcal S} \geq 0$ and $g_n^{\mathcal T} \geq 0$ are assumed to satisfy
%{\footnote{KK:{\color{blue} Notation convension: how about $g^1_n$ and $g^2_n$ as $g^s_n$ and $g^t_n$? }}}
\begin{align}\label{boundarydata}
g^{\mathcal S}_n \in C^\infty_c (A), \qquad
 {\rm supp} \,\, g_n^{\mathcal T} \subset (0,1), \quad   g^{\mathcal T}_n   \in L^1({\mathbb R}) \cap C^\infty([0, 1)),
\end{align}
where the set $A\subset\Rn$ is defined by
\begin{align}\label{0330-1}
A = \{ y'=(y_1, y_2, \cdots, y_{n-1})\in \Rn \, | \, 1 < |y'| < 2, \,  -2 < y_i
< -1, \, \, 1 \leq i \leq n-1 \}.
\end{align}

Here we consider the case of non-smooth $g^{\mathcal T}_n$, which is supposed to be three kinds of prototype types. More precisely, we assume the following for $g^{\mathcal T}_n$:
\begin{assumption}\label{Assume-phi}
We suppose that  $  g^{\mathcal T}_n  (s) = \phi(1-s)  $, where  $\phi$ satisfies
\begin{align}
\phi \geq 0, \quad {\rm supp} \,\, \phi \subset (0,1), \quad   \phi   \in L^1({\mathbb R}) \cap C^\infty((0, 1]), \quad \phi(1) =0.
\end{align}
%and $\phi$ is strictly decreasing over $(0, \frac{3}{4})$.
Assume further that either
\begin{eqnarray}\label{phi-assume-log}
\phi(s)=\abs{\ln s}\quad \mbox{or}\quad \phi(s)=\abs{\ln s}^{-1},\qquad s\in (0, \frac{1}{2}]
\end{eqnarray}
or $\phi$ satisfies
\begin{eqnarray}\label{phi-assume-v10}
 \phi^{(k)}( s)\approx  \phi^{(k)}( 2s), \quad \qquad s\in (0, \frac{1}{4}],\,\,\,\,k = 0, \,\, 1.
\end{eqnarray}
Here, $A\approx B$ means that   there exists a  constant $c>1$ such that $c^{-1}B\le A\le cB$.
% for some positive  constants $c_1, c_2$.
Furthermore, we also assume that %for $\phi$ satisfying \eqref{phi-assume-v10} that
 \begin{eqnarray}\label{phi-assume-v15}
 &\displaystyle\int_\ep^1 s^{-\al} \phi(  s) ds \approx  \max( 1,  \ep^{ 1-\al} \phi (\ep)), %\quad \al+a\neq 1,
 \quad  0  <  \al, \quad \al \neq 1.\\
%\end{eqnarray}
%\begin{eqnarray}
 \label{phi-assume-v16}
 &\displaystyle\int_\ep^1 s^{-\al} \phi' (  s) ds \approx  \ep^{ 1-\al}  \phi' (\ep),    \quad  0  <  \al \quad \al \neq 1.
 \end{eqnarray}
%  If $0 <a<1$ in Assumption \ref{Assume-phi}, it follows that
 \begin{eqnarray}
 \label{phi-assume-v17}
 \displaystyle \int_0^\ep   \phi(s) ds \approx   \ep \phi(\ep).
 \end{eqnarray}
%\qed
\end{assumption}
Some comments are prepared for the Assumption \ref{Assume-phi}.
\begin{itemize}
\item[(i)]
 In case that $\frac34 <  t < 1$, if $\phi$ satisfies \eqref{phi-assume-v10}, then one can see that
 \begin{eqnarray}\label{phi-assume-v11}
 \phi^{(k)}(t-s)\approx \phi^{(k)}(1-s)\quad \mbox{ for } \quad   \frac12< s < 2t-1\quad k = 0,1,
 \end{eqnarray}
 \begin{eqnarray}\label{phi-assume-v12}
 \phi^{(k)}(1-t)\approx \phi^{(k)}(1-s)\quad \mbox{ for } \quad  2t-1< s <  t \quad k =0,1,
 \end{eqnarray}
since $\frac12 ( 1-s) \leq  t-s \leq  (1-s) $ for $\frac12< s < 2t-1$  and
 $\frac12( 1-s) \leq 1-t \leq  (1-s)$ for $2t-1< s <  t$. Similarly, if $ 1 < t< \frac54$,
it is straightforward that
\begin{eqnarray}\label{phi-assume-v13}
 \phi^{(k)}(t-s)\approx \phi^{(k)}(1-s)\quad \mbox{ for } \quad  \frac34< s < 2-t \quad k =0,1,
 \end{eqnarray}
 \begin{eqnarray}\label{phi-assume-v14}
 \phi^{(k)}( t-1)\approx \phi^{(k)} (t-s)\quad \mbox{ for } \quad  2-t< s <  1 \quad k =0,1.
 \end{eqnarray}
This is also due to facts that
$( 1-s) \leq  t-s \leq  2(1-s)$ and $\frac12( t-s) \leq t-1 \leq  (t-s)$ for $0< s < 2-t$ and $ 2-t< s <  1$, respectively.

\item[(ii)] One can check that $\phi(s)=\abs{\ln s}$ or $\phi(s)=\abs{\ln s}^{-1}$ satisfy
\eqref{phi-assume-v15}-\eqref{phi-assume-v17}. We note that $\phi(t)=t^{-a}$ with $a\in (0,1)$, $a+\alpha\neq 1$ is another typical type that we will treat in this paper.

\item[(iii)] 
We note that  $\phi(t) = t^{-a}\eta(t)$ is
a prototype  in Assumption \ref{Assume-phi}, where
$\eta$ is cut off fuction with $\eta(t)=1$ for $t\in (0, \frac{1}{2})$ and $\eta(t)=0$ for $t\ge \frac{3}{4}$.

 \end{itemize}

As a notational convention, we denote $\rm{sgn}\,(a)=1$ if $a>0$ and $\rm{sgn}\,(a)=-1$ if $a<0$.

Next lemma, some elementary properties of $\Phi$ are stated and, for clarity, its proof will be given in section 3.  
\begin{lemm}\label{lemma240109}
Let  $0 < t < \frac14$ and $\phi$ satisfy the Assumption \ref{Assume-phi}. Then,
%Let ${\rm sign} (\phi') =1$ if $\phi'  > 0  $ and ${\rm sign} (\phi')  = -1$ if $\phi' < 0$   such that ${\rm sign} (\phi')  \phi'(t) >0$ for $0 < t < \frac14$.
for $ 0 <  x_n< \frac14$ and   $  \sqrt{ t} < x_n$,
\begin{align}\label{0110-1}
\phi( t) e^{-\de_1 \frac{x_n^2}{t}} \leq c \phi (x_n^2), \quad {\rm sgn} (\phi')  \phi'( t) e^{-\de_1 \frac{x_n^2}{t}} \leq c \,{\rm sgn} (\phi') \phi' (x_n^2).
\end{align}
In the case $ \phi  = t^{-a} $,
there is a large constant $\ep_1\gg 1$
%for  $ \ep > 0$ there is $\ep_1>>1$
such that if  $\ep_1 \sqrt{t} < x_n$,  then there exists there a sufficiently small $\ep$ depending on $\ep_1$ such that
\begin{align}\label{240110-5}
t^{-\frac12} e^{-\de_1 \frac{x_n^2}{t}}  \phi( t) \leq \ep  x_n  \,{\rm sgn} (\phi') \phi' (x_n^2).
\end{align}
\end{lemm}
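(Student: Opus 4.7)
The plan is to introduce the scaling variable $u = x_n^2/t$, which by the hypothesis $\sqrt{t} < x_n$ satisfies $u > 1$, so that the exponential factor $e^{-\de_1 u}$ is available to absorb any polynomial growth. The whole argument reduces to showing that the ratios $\phi(t)/\phi(x_n^2)$ and $|\phi'(t)|/|\phi'(x_n^2)|$ grow at most polynomially in $u$; this is checked separately for the three prototype classes listed in Assumption \ref{Assume-phi}, since the argument is so elementary that a case split is the cleanest route.

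For the first inequality in \eqref{0110-1}, in the case $\phi(s)=|\ln s|$ I write $\phi(t)=|\ln x_n^2|+\ln u$ and use that $|\ln x_n^2| \geq \ln 16 > 0$ together with the elementary bound $(A+\ln u)e^{-\de_1 u} \leq e^{-\de_1}A+C$ for $u \geq 1$, giving $\phi(t)e^{-\de_1 u}\leq c\phi(x_n^2)$. In the case $\phi(s)=|\ln s|^{-1}$ the function $\phi$ is increasing on $(0,1)$, so $\phi(t)\leq\phi(x_n^2)$ and $e^{-\de_1 u}\leq 1$ closes the estimate at once. In the doubling case \eqref{phi-assume-v10}, iterating $\phi(s)\approx\phi(2s)$ approximately $k\approx\log_2 u$ times yields $\phi(t)\leq C u^{\alpha}\phi(x_n^2)$ for some $\alpha\geq 0$ determined by the doubling constant, and $u^\alpha e^{-\de_1 u}$ is bounded on $[1,\infty)$.

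For the $\phi'$ estimate, the same strategy works, but one must first verify in each case that $\phi'(t)$ and $\phi'(x_n^2)$ have the same sign, so that the inequality is equivalent to the one for absolute values. For $\phi(s)=|\ln s|$ we have $|\phi'(s)|=1/s$ and the ratio is exactly $u$, absorbed by $u e^{-\de_1 u}$. For $\phi(s)=|\ln s|^{-1}$ we have $|\phi'(s)|=1/(s(\ln s)^2)$, and the ratio becomes $u(\ln x_n^2)^2/(\ln t)^2 \leq u$, since $|\ln t|=|\ln x_n^2|+\ln u\geq|\ln x_n^2|$. The doubling case is handled as before, using the $k=1$ part of \eqref{phi-assume-v10}.

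Finally, for \eqref{240110-5} with $\phi(s)=s^{-a}$, I substitute $t=x_n^2/u$ and note that the left-hand side equals $u^{a+1/2}e^{-\de_1 u} x_n^{-2a-1}$, while the right-hand side is $\ep\cdot a\cdot x_n^{-2a-1}$ because $|\phi'(x_n^2)|=a x_n^{-2a-2}$. The inequality reduces to $u^{a+1/2}e^{-\de_1 u}\leq \ep a$, and since the hypothesis $\ep_1\sqrt{t}<x_n$ forces $u>\ep_1^2$, choosing $\ep_1$ large enough (depending on $\ep$, $a$, $\de_1$) makes $\sup_{u\geq\ep_1^2} u^{a+1/2}e^{-\de_1 u}\leq \ep a$ by the decay of $u^{a+1/2}e^{-\de_1 u}$ at infinity. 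The only place requiring minor care is the doubling iteration in the third case, where the explicit power $\alpha=\log_2 c$ has to be tracked to ensure it is independent of $t$ and $x_n$; all other estimates are one-line calculations once the substitution $u=x_n^2/t$ is in place.
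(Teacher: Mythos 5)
Your proof is correct and follows essentially the same strategy as the paper's: set $u = x_n^2/t > 1$, show that the ratio $\phi(t)/\phi(x_n^2)$ (and its primed analogue) grows at most polynomially in $u$, and let $e^{-\de_1 u}$ absorb the polynomial. The only organizational difference is that the paper treats all three prototype classes uniformly through the doubling iteration (taking $k$ with $2^k \le x_n^2/t < 2^{k+1}$ and using $e^{-x}\le c_m x^{-m}$), whereas you split into the three cases and compute the ratios explicitly for the two logarithmic prototypes and iterate doubling only in the third; this is a bit more explicit but buys no generality, since both $|\ln s|$ and $|\ln s|^{-1}$ already satisfy the doubling condition \eqref{phi-assume-v10} on $(0,\frac14]$. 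Your derivation of \eqref{240110-5} by reducing to $u^{a+1/2}e^{-\de_1 u}\le \ep a$ and using decay at infinity matches the paper's intent (the paper merely remarks that $t^{-1}\phi(t)\approx \mathrm{sgn}(\phi')\phi'(t)$ and repeats the argument), though note the quantifier order you use (choose $\ep_1$ after $\ep$) is the reverse of the statement's wording; both readings are serviceable for how the lemma is applied.
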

%The proof of Lemma \ref{lemma240109} will be given in section \ref{prelim}.

%Appendix \ref{proofoflemma240109}.

%Note that $supp \, g \subset (A \cap \{ x_n =0 \} ) \times (0, 1)$.
From \eqref{rep-bvp-stokes-w}, \eqref{Poisson-tensor-K}, and    $\eqref{1006-3}_3$,  for $1 \leq i \leq n$,  we have
\begin{align}\label{1220-8}
\begin{split}
  w_i (x,t) & =    4\int_0^t   \int_{\Rn} L_{n i }(x'- y', x_n, t
-s) g_n(y', s) dy' ds\\
  & \quad  + 4 ( 1 - \de_{in}) \int_0^t   \int_{\Rn} B_{i n}(x'- y', x_n, t
-s) g_n(y', s) dy' ds \\
  & \quad   +2    \int_{\Rn} D_{x_i} N (x'- y', x_n) g_n(y', t) dy' \\
&:=   w^{L}_{i}(x,t) + w^{B}_{i}(x,t) + w^{N}_{i}(x,t).
\end{split}
\end{align}

Form now on, for simplicity, we denote  by $\Ga'$  the $n -1$ dimensional Gaussian kernel.
\begin{defin}\label{nota-10}
As a notational convention, we denote
$A\vee B:=\max\bket{A, B}$. Further,  for a positive function $g$, we say that $f\approx \pm e^{-\delta g}$ if $ c^{-1}e^{-g(x)}\le |f(x)|\le ce^{-\frac{1}{8} g(x)}$ for some constant $c>0$.
\end{defin}

\begin{lemm}\label{lemma0709-1}
Let  $\Ga'$ and $N$ be $n -1$ dimensional Gaussian kernel and  $n$ dimensional Newtonian kernel defined in \eqref{H-L-10}.
Let $X'\in\Rn$ with $|X'| \geq 1$,  $ 0  <  x_n < \frac12 $ and $t>0$. Then,
 there exists $c > 0$, independent of $X', x_n$ and $t$, such that
\begin{align}\label{0515-1}
\begin{split}
\int_{\Rn}    \Ga'(X'-z',t)     D_{x_n} N( z',x_n) dz' &=   D_{x_n}  N( X',x_n) + I(X',x_n,t)  + J_1(X', x_n, t),\\
\int_{\Rn}    \Ga'(X'-z',t)    D_{z_i }  N( z',0) dz' &  =   D_{X_i }  N( X', 0)  + J_2(X', t),
\end{split}
\end{align}
where  $i=1,2,\cdots, n-1$ and $I$, $J_1$ and $J_2$ satisfy
\begin{align}\label{Jkl}
\begin{split}
I (x', x_n, t) \approx     t^{-\frac{n-1 }2  }   e^{- \de\frac{|x'|^2}{t}}, \quad
  | J_1(X', x_n,t)|  & \leq     c x_n     t^{\frac{1}{2}  },\qquad
  \abs{J_2 (X', t) }    \leq     c   t^{\frac{1}{2}  } \quad m \geq 1,
  \end{split}
\end{align}
where $''\approx''$ and $\de$ are defined  in Definition \ref{nota-10}.
\end{lemm}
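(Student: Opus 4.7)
The plan is a classical near--far decomposition relative to the point $X'$. Set
$A_1:=\{z'\in\Rn:|z'-X'|\le |X'|/2\}$ and $A_2:=\Rn\setminus A_1$. Since $|X'|\ge 1$, on $A_1$ we have $|z'|\ge 1/2$ (so the Newtonian kernels are smooth there), while on $A_2$ we have the Gaussian decay $\Ga'(X'-z',t)\le c\,t^{-(n-1)/2}e^{-|X'|^2/(16t)}$. Using $\int_{\Rn}\Ga'(X'-z',t)\,dz'=1$, each convolution in \eqref{0515-1} can be rewritten as the pointwise value plus $\int\Ga'(X'-z',t)[f(z')-f(X')]\,dz'$, where $f$ denotes the relevant kernel $D_{x_n}N(\cdot,x_n)$ or $D_{z_i}N(\cdot,0)$.

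For the first identity, define $I(X',x_n,t):=\int_{A_2}\Ga'(X'-z',t)D_{x_n}N(z',x_n)\,dz'$ and collect the rest into $J_1$. On $A_1$, direct differentiation yields $|\nabla_{z'}D_{x_n}N(z',x_n)|\le c\,x_n$ (since $|z'|\ge 1/2$), which combined with $\int_{\Rn}|y|\,\Ga'(y,t)\,dy\le c\,t^{1/2}$ bounds the $A_1$-part of $J_1$ by $c\,x_n\,t^{1/2}$; the boundary correction $D_{x_n}N(X',x_n)\int_{A_2}\Ga'$ is $\le c\,x_n\,e^{-c|X'|^2/t}\le c\,x_n\,t^{1/2}$ uniformly in $|X'|\ge 1$. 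For the upper bound on $I$ I use the Poisson-kernel mass $\int_{\Rn}D_{x_n}N(z',x_n)\,dz'=c_n$ (independent of $x_n$) together with the Gaussian bound on $A_2$. For the lower bound, I restrict the integration to $\{|z'|<x_n\}\subset A_2$ (the inclusion holds because $|X'|\ge 1$ and $x_n<1/2$), on which $D_{x_n}N(z',x_n)\gtrsim x_n^{-(n-1)}$, the volume is $\sim x_n^{n-1}$, and $\Ga'(X'-z',t)\gtrsim t^{-(n-1)/2}e^{-c|X'|^2/t}$, yielding $I\approx \pm t^{-(n-1)/2}e^{-\delta|X'|^2/t}$ in the sense of Definition~\ref{nota-10}.

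For the second identity, the same split applies but now $g(z'):=D_{z_i}N(z',0)$ (which equals $c_n(n-2)z_i|z'|^{-n}$ for $n\ge 3$, analogously for $n=2$) is singular at $z'=0\in A_2$. On $A_1$, $|\nabla g|\le c$ independent of the other variables, so the mean-value argument gives an $A_1$-contribution bounded by $c\,t^{1/2}$. For the $A_2$-integral I split further at $|z'|=1/2$: on $A_2\cap\{|z'|\ge 1/2\}$, $|g(z')|\le c|z'|^{-(n-1)}$ combined with the exponential Gaussian tail gives a small contribution; on $\{|z'|<1/2\}\subset A_2$, I exploit the odd symmetry of $g$ in $z_i$ to replace $\Ga'(X'-z',t)$ by $\Ga'(X'-z',t)-\Ga'(X',t)$ (the constant piece integrating to zero), then bound the difference by $c|z'|\cdot(|X'|/t)\,t^{-(n-1)/2}e^{-c|X'|^2/t}$ and use integrability of $|z'|^{-(n-2)}$ near the origin. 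A rescaling $z'=|X'|u$ verifies the uniform bound $|J_2|\le c\,t^{1/2}$ in all regimes of $(t,|X'|)$, including large-$t$. The main obstacle is precisely this last step: giving rigorous meaning to and a clean $O(t^{1/2})$ bound for a formally divergent principal-value integral via odd-symmetry cancellation against $\Ga'(X',t)$ together with the mean-value estimate on $\nabla\Ga'$.
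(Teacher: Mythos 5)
Your decomposition for the first identity is correct but genuinely different from the paper's. You split $\Rn$ into two regions centered at $X'$, namely $A_1=\{|z'-X'|\le|X'|/2\}$ and its complement $A_2$, and you define $I$ to be the integral over all of $A_2$. The paper instead uses a three-way split $D_1=\{|z'-X'|\le|X'|/10\}$, $D_2=\{|z'|\le|X'|/10\}$, $D_3$ the rest; its $I$ is only the $D_2$-integral (where $|x'-z'|\approx|x'|$ so $\Gamma'$ is genuinely comparable to $t^{-(n-1)/2}e^{-\delta|x'|^2/t}$), while the $D_3$-contribution $K_3$ is thrown into $J_1$ with the bound $|K_3|\le cx_n t^{1/2}$. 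Your $I$ absorbs the analogue of $K_3$, which is harmless: your upper bound $\int_{A_2}\Gamma' D_{x_n}N\le c\,t^{-(n-1)/2}e^{-|X'|^2/(16t)}$ (via the Gaussian tail on $A_2$ and $\int D_{x_n}N=c_n$) and your lower bound via restriction to $\{|z'|<x_n\}$ (valid since $\{|z'|<1/2\}\subset A_2$ when $|X'|\ge1$ and $D_{x_n}N>0$) give the same two-sided exponential estimate as the paper, up to the exact numerical constants in Definition~\ref{nota-10}. Your mean-value estimate for the near piece of $J_1$ and your bound $x_n e^{-c|X'|^2/t}\le c\,x_n t^{1/2}$ for the mass deficit both check out uniformly in $|X'|\ge1$, $0<x_n<1/2$, $t>0$. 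The paper's three-region version buys a cleaner identification of where the exponential comes from (precisely the region $|z'|\lesssim|x'|$), but the conclusion is the same.

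For the second identity, be aware that the paper does not prove it at all: it simply cites Lemma~3.2 of \cite{CK23} and then only proves the first identity. You attempt a direct proof, and you correctly identify the essential difficulty -- $D_{z_i}N(\cdot,0)$ scales like $|z'|^{-(n-1)}$ in $\mathbb R^{n-1}$, so the integral is only conditionally convergent and requires odd-symmetry cancellation. Your outline (subtract $\Gamma'(X',t)$, use $|\nabla\Gamma'(y,t)|\lesssim (|y|/t)\Gamma'(y,t)$ to gain one power of $|z'|$, integrate $|z'|^{-(n-2)}$ near the origin) is the right idea and does produce a bound of the form $c\,(|X'|/t)\,t^{-(n-1)/2}e^{-c|X'|^2/t}$ on the near-origin piece, which one can indeed dominate by $c\,t^{1/2}$ uniformly in $|X'|\ge 1$. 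But you yourself flag this as "the main obstacle" and leave the uniform verification to a rescaling heuristic; this step should be written out carefully (separating $t\ge1$ from $t<1$ and using $\sqrt u\,e^{-cu}\le C e^{-cu/2}$ for $u=|X'|^2/t\ge 1/t$). As it stands your argument for $J_2$ is a plausible sketch rather than a complete proof, though to be fair it addresses a part the paper delegates to an external reference.
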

The proof of Lemma \ref{lemma0709-1} will be provided in section \ref{prelim}.
%Appendix \ref{proofoflemma}.

\begin{rem}\label{rem1217}
Let $i=1,2,\cdots, n-1$.
Suppose that $x', y'\in \Rn$ with $|x'| < \frac12$ and $y'\in A$ and, for convenience, we denote $X'=x'-y'$. Then,  it is direct that $\frac{1}{2}<\abs{X'}\le \frac{5}{2}$, because $\frac12  \leq x_i -y_i$ and $  |X'| \leq   (|x'| + | y'|) \leq \frac52$, which implies that
there are positive constants $c_1$ and $c_2$ such that
\[
c_1 \leq  D_{x_i }  N( x' -y', 0)  \leq c_2,\qquad c_1 x_n \leq   D_{x_n}  N( x' -y', x_n)  \leq c_2 x_n.
\]
%$c_1 \leq  D_{x_i }  N( x' -y', 0)  \leq c_2$ and $c_1 x_n \leq  - D_{x_i } D_{x_n}  N( x' -y', x_n)  \leq c_2 x_n $.
%Let  $|x'| < \frac12$ and $y'\in A$ and $ X' = x' -y'$, and $ \frac34 < t < \frac54$, $0<s<t$. Note that
%$ \frac12 = -\frac12 +1 \leq x_i -y_i = X_i \leq -\frac12 +1 = \frac32$  and
%$  |X'| \leq   (|x'| + | y'|) \leq \frac12  +  2 = \frac52$.  Hence,  we have $ 0<  c_1 \leq  D_{x_i }  N( x' -y', 0)  \leq c_2  $ for $|x'| < \frac12$ and $y'\in A$.
Therefore,  it follows  from  $\eqref{0515-1}$ and  $\eqref{Jkl}$  that
\begin{align}\label{eq1214-1}
\abs{\int_{\Rn}    \Ga'(x'-y'-z',t)    D_{z_i }  N( z',0) dz' } \leq c, \qquad  |x'| \leq \frac12, \,\,\, y'   \in A, \\
\label{240116-1}
\abs{\int_{\Rn}    \Ga'(x'-y'-z',t)    D_{x_n}  N( z',x_n) dz' } \leq c\Big(  x_n  +  t^{-\frac{n-1 }2  }   e^{-\frac{1}{8t}}  \Big), \qquad  |x'| \leq \frac12, \,\,\, y'   \in A,  %\quad   0 < t < 1
\end{align}
where $ \frac34 < t < \frac54$,  $0<s<t$ and $k\ge 0$.
Furthermore, there exist  $c_0>0$ and $t_0\in (0,  \frac14)$ such that if $0 < t-s < t_0$, then
\begin{align}\label{eq1214-1-1}
c_0\leq   \int_{\Rn}    \Ga'(x'-y'-z',t-s)    D_{z_i }  N( z',0) dz'  \qquad | x'| \leq \frac12, \,\,\, y'   \in A,
%\label{240106-2}
%c_0x_n \leq   -\int_{\Rn}    \Ga'(x'-y'-z',t-s)    D_{z_i }  D_{x_n} N( z',x_n) dz'  \qquad | x'| \leq \frac12, \,\,\, y'   \in A.
\end{align}
\begin{align}
\label{240116-1-8}
c_0 ( x_n+  t^{-\frac{n-1 }2  }   e^{-\frac{1}{t}} ) \leq  \int_{\Rn}    \Ga'(x'-y'-z',t-s)     D_{x_n}  N( z',x_n) dz' , \qquad  |x'| \leq \frac12, \,\,\, y'   \in A.  %\quad   0 < t < 1
\end{align}
Indeed, the estimate \eqref{240116-1-8} is due to the computations that
\[
\int_{\Rn}    \Ga'(x'-y'-z',t-s)     D_{x_n}  N( z',x_n) dz'
\]
\[
\ge D_{x_n}  N( X',x_n)   +  t^{-\frac{n-1 }2  }   e^{-\frac{1}{t}} -\abs{J_1(x', x_n,t-s)}\ge c(x_n +  t^{-\frac{n-1 }2  }   e^{-\frac{1}{t}}  ).
\]
 In conclusion, due to \eqref{eq1214-1}-\eqref{240116-1-8}, we obtain
\begin{equation}\label{March02-10}
 \int_{\Rn}    \Ga'(x'-y'-z',t-s)    D_{z_i }  N( z',0) dz' \approx c, \qquad | x'| \leq \frac12, \,\,\, y'   \in A,
\end{equation}
\begin{equation}\label{March02-20}
\int_{\Rn}    \Ga'(x'-y'-z',t-s)     D_{x_n}  N( z',x_n) dz' \approx x_n+  t^{-\frac{n-1 }2  }   e^{-\frac{1}{t}}, \qquad  |x'| \leq \frac12, \,\,\, y'   \in A. 
\end{equation}
\qed
\end{rem}
% We note that there exists $t_1>0$   such that $\abs{t-1}\phi( \abs{t-1})\leq  \frac{1}{2}\phi( \abs{t-1})$ for any $t<t_1$.
We  define for notational convention
\[
\calD_+=\bket{(x,t)\in \mathbb R^n_+ \times  (0, \infty) : |t-1|^\frac12 \ge  x_n,\,\,\abs{t-1}< t_0, \,\,0 < x_n < \frac12},
\]
\[
\calD_-=\bket{(x,t)\in \mathbb R^n_+ \times (0, \infty) : |t-1|^\frac12 < x_n,\,\,\abs{t-1}< t_0,\,\,0 < x_n < \frac12},
\]
\[
\calD_0=\bket{(x,t)\in \mathbb R^n_+ \times (0, \infty) :  \frac{1}{2}< \frac{x_n}{|t-1|^\frac12}<1,\,\,\abs{t-1}< t_0,\,\,0 < x_n < \frac12}.
\]
Here $t_0>0$ is given in Remark \ref{rem1217} and $ t_1 < t_0$ will be determined later.
We also introduce two different regions defined by
\begin{align}\label{regions-10}
\begin{split}
\tilde{\calD}_+&=\calD_+\cap \bket{(x,t):  x_n\leq   \ep_0   | t -1|^\frac12   }, \\
\tilde{\calD}_-&=\calD_-\cap \bket{(x,t):   \ep_1 |t -1|^\frac12\leq x_n },
\end{split}
\end{align}
where $\epsilon_0$ and $\epsilon_1$ be positive constants with $0<\epsilon_0\ll 1\ll \epsilon_1<\infty$, which will be specified later.

\begin{theo}\label{theo1114-1}
Let $w$ be the solution of the Stokes equations \eqref{StokesRn+} in the half-space with boundary data $g$ given in \eqref{0502-6} and \eqref{boundarydata}.  Suppose that $\phi$ satisfies Assumption \ref{Assume-phi}. Then, there is $ 0 < t_1 < t_0$ such that if $\abs{t-1}< t_1 $, $a\neq \frac12$, $ |x'| < \frac12$ and $0 < x_n < \frac12$ such that we prove the following: Let $i=1,2,\cdots, n-1$.
%\begin{align}\label{eq1218-10-100}
%| w_i (x,t)| \leq c   \left\{\begin{array}{l} \vspace{2mm}
%    x_n |t -1|^{-\frac12} \phi (|t -1|) \quad x_n \leq  |t -1|^\frac12,\\
 %   \phi(x_n^2) + \phi ( |t -1|) \quad  |t-1|^\frac12 < x_n.
 %   \end{array}
 %   \right.
%\end{align}
\begin{align}\label{eq1218-10}
\abs{w_i(x,t) }
  \lesssim \left\{\begin{array}{ll} \vspace{2mm}
     1 \quad  &\mbox{if}\,\,\phi(t)=\abs{\ln t}^{-1}\\
    \phi (|1 -t| )%{\bf I}_{\calD_+}+ \phi(x_n^2){\bf I}_{\calD_-}
    \quad &\mbox{otherwise}.
    \end{array}
    \right.
\end{align}
In case that $\phi(t)\neq \abs{\ln t}^{-1}$, there exist $\ep_0 $ and $\ep_1$, depending on $t_1 $,  in \eqref{regions-10} such that
\begin{align}\label{eq0828-5-1}
  w_{i} (x,t)
    \ge c\phi ( 1 -t)
    \quad &\mbox{if}\,\,  \,\,   (x,t)\in \tilde{\calD}_- ,\quad t < 1.
\end{align}
%For $1-\min\bket{t_0, t_1}<t<1$,
\begin{align}\label{eq0828-5-1-1}
  w_{i} (x,t)
    \le -c\phi ( t-1)
    \quad \mbox{if}\,\,  \,\,   (x,t)\in\tilde{\calD}_- \cup \calD_0,\quad  1 < t <1+ t_0.
\end{align}
%\begin{align}\label{240127-1}
%w_i (x,t) < -c \phi(t-1) \quad \frac12 \sqrt{t-1} < x_n < \sqrt{t-1} \,\,\, 1 < t< 1+ t_0. ???
%\end{align}
\begin{align}\label{eq0829-1-8}
 \begin{split}
|D_{x_n} w_{i}(x,t) |   \leq c   \left\{\begin{array}{cl}\vspace{2mm}
   \bke{(t-1) \vee x_n^2}^{ -\frac12} \phi\bke{(t-1) \vee x_n^2}), \quad &\mbox{ if }\,\,1 < t < \frac54,\\
     \bke{(1-t) \vee x_n^2}^{ \frac12} |\phi'\bke{(1-t) \vee x_n^2})|,\quad &\mbox{  if }\,\,\frac34 < t<1.
   \end{array}
   \right.
\end{split}
 \end{align}
%\begin{align}\label {eq0829-1-8}
%|D_{x_n} w_{i}(x,t) |   \leq c \bke{(t-1) \vee x_n^2}^{ -\frac12} \phi\bke{(t-1) \vee x_n^2}) \quad  1 < t < \frac54.
%\end{align}
% \begin{align}\label{eq0829-1}
%\abs{D_{x_n} w_i(x,t)}  \leq c \bke{(1-t) \vee x_n^2}^{ \frac12} |\phi'\bke{(1-t) \vee x_n^2})| \quad  \frac34 < t<1.
% \end{align}
 \begin{align}
 \label{1218-20}
D_{x_n} w_i(x,t)
 & \approx
 - ( 1-t)^\frac12  \phi'(1-t)  {\bf I}_{\tilde {\calD_+} }-x_n \phi'(x_n^2)    {\bf I}_{\tilde{\calD_-}}, \quad 1-t_1<t<1.
 \end{align}
\begin{align}\label{0126-1}
D_{x_n} w_i (x,t) \approx -(t-1)^{-\frac12} \phi(t-1)
\quad \mbox{if}\,\,  \,\,   (x,t)\in\tilde{\calD}_+,\quad  1 < t <1+ t_0.
\end{align}
If  $1<t<1+t_1$ and $(x,t)\in\tilde{\calD}_-$
%and for  $ \ep_1 \sqrt{t-1}  <  x_n  $, 
then
\begin{align}\label{Jan02-10}
 \begin{split}
D_{x_n} w_{i}(x,t)  \approx   \left\{\begin{array}{cl}\vspace{2mm}
    - x_n \phi' (x_n^2), \quad &\mbox{ if }\,\,\phi = t^{-a},\\
      - x_n \phi' (x_n^2) + (t -1)^{-\frac32}x_n^2 e^{-\frac{x_n^2}{4(t-1)}}    \phi(t-1 ),\quad &\mbox{ otherwise}.
   \end{array}
   \right.
\end{split}
 \end{align}

\begin{align}\label{1219-3}
 \begin{split}
p(x,t) \approx   \left\{\begin{array}{cl}\vspace{2mm}
   -   \phi' (1-t), \quad &\mbox{ if }\,\,1-t_1<t < 1,\\
    x_n  (t-1)^{-\frac12 } \phi(t-1) \pm 1,\quad &\mbox{ if }\,\, 1 < t< 1+ t_1.
   \end{array}
   \right.
\end{split}
 \end{align}

%\begin{align}
% \label{1219-3}
%\begin{split}
%& p(x,t)
%  \approx
% -   \phi' (1-t), \mbox{if}\,\,1-t_1<t < 1,  \\
%&    c_1x_n  (t-1)^{-\frac12 } \phi(t-1)  -c_2  \leq p (x,t) \leq c_3 x_n  (t-1)^{-\frac12 } \phi(t-1)  + c_4,\,\, \mbox{if} \,\,  1 < t< 1+ t_1.
%  \end{split}
% \end{align}

\end{theo}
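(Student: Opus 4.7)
The plan is to start from the decomposition \eqref{1220-8}, $w_i = w_i^L + w_i^B + w_i^N$, together with the pressure representation \eqref{representationpressure}. Because $g_n(y',s) = g_n^{\mathcal S}(y')\phi(1-s)$ factorizes, the $y'$-integrals can be evaluated using Lemma \ref{lemma0709-1} and Remark \ref{rem1217}, which reduces each of $w_i^B$ and $w_i^L$ (up to multiplication by a smooth function of $(x',x_n)$ bounded above and below by positive constants on $|x'|<\tfrac12$) to a one-dimensional time integral involving the one-dimensional heat kernel $\Ga_1(x_n, t-s)$ or $D_{x_n}\Ga_1(x_n,t-s)$ convolved in $s$ with $\phi(1-s)$. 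The pointwise term $w_i^N$ equals $2\phi(1-t)\int D_{x_i}N(x'-y',x_n)g_n^{\mathcal S}(y')\,dy'$, which by Remark \ref{rem1217} is exactly of order $\phi(1-t)$ for $t<1$ and vanishes for $t>1$.

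With these reductions in hand, the upper bound \eqref{eq1218-10} is obtained by splitting the $s$-integral at $s=\max(2t-1,0)$ (or $s=\max(2-t,0)$ when $t>1$) and applying the dichotomy \eqref{phi-assume-v11}--\eqref{phi-assume-v14} to treat $\phi(\pm(t-s))$ as $\phi(|1-t|)$ on the outer piece and as $\phi(1-s)$ on the inner piece; the inner piece is then integrated using \eqref{phi-assume-v15}--\eqref{phi-assume-v17} combined with the Gaussian decay in $x_n$. The borderline case $\phi=|\ln s|^{-1}$ is where the remainder becomes comparable to the leading term, giving the clean $O(1)$ bound. The lower bounds \eqref{eq0828-5-1}, \eqref{eq0828-5-1-1} then come from identifying a dominant piece of the correct sign: for $t<1$ and $(x,t)\in\tilde{\calD}_-$, $w_i^N\approx\phi(1-t)$ beats the $w_i^B+w_i^L$ contributions once $\ep_1$ is taken large, with Lemma \ref{lemma240109} (in particular \eqref{240110-5}) being exactly what lets us control $w_i^{L}+w_i^B$ by $x_n\phi'(x_n^2)\ll\phi(1-t)$; for $1<t<1+t_0$, $w_i^N\equiv 0$, and a sign analysis of the explicit $L_{ni}$ and $B_{in}$ kernels (using $\int_0^{x_n}D_{z_n}\Ga\,dz\ge 0$ together with positivity of $\Ga$, $g_n^{\mathcal S}$, and $\phi$) identifies the sign as negative on $\tilde{\calD}_-\cup\calD_0$.

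The derivative estimates \eqref{eq0829-1-8}, \eqref{1218-20}, \eqref{0126-1}, \eqref{Jan02-10} follow the same outline after differentiating in $x_n$: the reduced integrands pick up an extra $D_{x_n}$. An integration by parts in $s$ converts $D_{x_n}^2 \Ga_1$ to $\pa_s\Ga_1$, thereby producing $\phi'(1-s)$, which is precisely why Assumption \ref{Assume-phi} imposes matching estimates on $\phi'$ in \eqref{phi-assume-v16}. The buffers $\ep_0,\ep_1$ in $\tilde{\calD}_\pm$ are then dictated by Lemma \ref{lemma240109}: on $\tilde{\calD}_+$ the relevant time scale is $|1-t|$, giving a leading term of size $(1-t)^{1/2}\phi'(1-t)$ for $t<1$ and $(t-1)^{-1/2}\phi(t-1)$ for $t>1$; on $\tilde{\calD}_-$ the scale is $x_n^2$, producing $x_n\phi'(x_n^2)$. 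The ``otherwise'' alternative in \eqref{Jan02-10} records the $s\to 1$ tail controlled by \eqref{phi-assume-v17}; it is absent for $\phi=t^{-a}$ because the power function is scale-invariant and \eqref{phi-assume-v15} degenerates into a clean asymptotic. Finally, \eqref{1219-3} is obtained by plugging $g$ into \eqref{representationpressure}: the first term, controlled exactly as $w_i^N$ via Remark \ref{rem1217}, together with the $N\ast' D_t g_n$ contribution, produces the $-\phi'(1-t)$ and the leading behavior for $t>1$, while the middle double-integral term reduces to the same model integrals as $w_i^B$ and is absorbed.

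The main obstacle is not any single estimate but the coordinated sign and size analysis needed for the matching lower bounds in the $\approx$ statements. One has to show that the identified leading piece is not canceled by subleading ones across three $\phi$-regimes (logarithmic \eqref{phi-assume-log}, pure power $\phi=t^{-a}$, and general \eqref{phi-assume-v10}--\eqref{phi-assume-v17}), two subregions $\tilde{\calD}_\pm$, and both $t<1$ (where $w_i^N\neq 0$) and $t>1$ (where $w_i^N=0$ so the sign is fixed by $w_i^L+w_i^B$). This forces the constants $\ep_0$ and $\ep_1$ in \eqref{regions-10} to be chosen only after all subleading bounds are in place, so that \eqref{240110-5} and \eqref{phi-assume-v15}--\eqref{phi-assume-v17} yield strict domination rather than merely comparability.
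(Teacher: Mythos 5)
Your outline matches the paper's proof plan in its essentials: the decomposition $w_i = w_i^L + w_i^B + w_i^N$ from \eqref{1220-8}, factorization of the $y'$-integrals via Lemma \ref{lemma0709-1} and Remark \ref{rem1217} to reduce $w^B$ to a one-dimensional time integral in $\phi(1-s)$ against a heat-kernel profile, the splitting at $s = 2t-1$ (or $2-t$) exploiting \eqref{phi-assume-v11}--\eqref{phi-assume-v17} (which is what Lemma \ref{key-lemma} packages as $\Psi_{\phi,k}$, $\tilde\Psi_{\phi,k}$), dominance of $w^N\approx\phi(1-t)$ on $\tilde{\calD}_-$ for $t<1$, the heat-equation integration by parts converting $D_{x_n}^2$ to $\pa_s$ (hence $\phi\to\phi'$), and the role of $\ep_0,\ep_1$ dictated by Lemma \ref{lemma240109}. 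So this is not a different route.

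However, two concrete steps of your plan would fail. First, the pressure for $t>1$: you attribute the leading behavior $x_n(t-1)^{-1/2}\phi(t-1)\pm1$ to the combination of the first term of \eqref{representationpressure} and the $N*'D_t g_n$ contribution, with the middle double-integral term "absorbed." This is backwards. The first term $p_1$ is proportional to $\phi(1-t)$ and the last term $p_3$ to $\phi'(1-t)$, both of which vanish identically for $t\ge 1$ because $\mathrm{supp}\,\phi\subset(0,1)$. For $t>1$ the pressure equals $p_2$ alone, and the $t>1$ asymptotics in \eqref{1219-3} comes entirely from $p_{21}=2D_tD_{x_n}N*'\mathcal{U}g_n$ (see the computation culminating in \eqref{0301-1-1}); the first and last terms give nothing. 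Second, for the lower bound \eqref{eq0828-5-1-1} with $t>1$, you propose a sign analysis of the $L_{ni}$ kernel "using $\int_0^{x_n}D_{z_n}\Ga\,dz\ge 0$." But $D_{z_n}\Ga(z,t) = -\frac{z_n}{2t}\Ga(z,t)<0$ for $z_n>0$, so $\int_0^{x_n}D_{z_n}\Ga\,dz = \Ga(x',x_n,t)-\Ga(x',0,t)\le 0$, and after the additional convolution with $D_{x_i}N$ the sign of $L_{ni}$ is not definite. The paper does not use a sign for $w^L$; it shows via \eqref{0930-1} that $|w_i^L|\le c_1\sqrt{t-1}\,\phi(t-1)$ is strictly subdominant to $w_i^B\approx-\phi(t-1)$ (see \eqref{1215-10-10}), and the negative sign comes from $w^B$ alone, where the 1D kernel $D_{x_n}\Ga_1<0$ is genuinely signed and the spatial factor is positive by Remark \ref{rem1217}. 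You should replace the proposed sign argument for $L_{ni}$ with a smallness estimate.
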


\begin{rem}\label{rem0929-1}
The results of Theorem \ref{theo1114-1} imply the following regularity of the solution
for $\abs{t-1}< t_1 $ and $ |x| < \frac12$ near boundary: Let $Q:=B^+_{\frac{1}{2}}\times (1-t_1, 1+t_1)$ and $i=1,2,\cdots, n-1$.
\begin{itemize}
\item[(1)] (The case $ \phi(t)  = |\ln (t)|)$\,\,  For any $p_1<\infty$ and $p_2<3$
\begin{align}
\begin{split}
&w_i \in L^{p_1}(Q), \quad 
   D_{x_n} w_i \in L^{p_2}(Q), \quad  p \in L^1_{weak}(Q)\quad \mbox{ but} \\
& w_i \notin L^\infty(Q),   \quad D_{x_n} w_i \notin L^{3}(Q),\quad p \notin L^1(Q).
\end{split}
\end{align}

\item[(2)] (The case $\phi(t) = |\ln (t)|^{-1}  $)\,\, For any $p_3>3$, $p_4>1$ and $\alpha\in (0, 1)$
\begin{align}
\begin{split}
&w_i \in  L^\infty(Q), \quad  D_{x_n} w_i \in L^{3}(Q), \quad  p \in L^1(Q),  \quad \mbox{ but}\\
&w_i \notin  \calC^{\alpha}(Q), \quad D_{x_n} w_i \notin L^{p_3}(Q),\quad   p \notin L^{p_4}(Q).
\end{split}
\end{align}

 \item[(3)] (The case $\phi(t)= t^{-a}, \,\, 0 < a< 1  $).\,\, For any $p_5<\frac{3}{2a}$,  $p_6<\frac{3}{1+2a}$ and $p_7<\frac{1}{1+a}$
\begin{align}
\begin{split}
&w_i \in L^{p_5}(Q),\quad  D_{x_n} w_i \in L^{p_6}(Q), \quad  p \in L^{p_7}(Q),\quad \mbox{ but}\\
&    w_i \notin L^{\frac3{2a}}(Q), \quad\,D_{x_n} w_i \notin L^{\frac{3}{1+2a}}(Q),\quad \, p \notin L^{\frac1{1 +a} }(Q).
\end{split}
\end{align}

\end{itemize}

\end{rem}

The case that $ \phi(t) = t^{-a}, \,\, 0 < a< 1  $ is most interesting and, combining Theorem \ref{theo1114-1} and Remark \ref{rem0929-1}, we make a table
to show asymptotic behavior of the solution near boundary.
\vspace{5mm}

 \begin{tikzpicture}[scale=3]

\draw (-1.60,  1.90) -- (4.20, 1.90); % the first row line
\draw (-1.60,  1.60) -- (4.20, 1.60);  % the second row line
 \draw (-1.2,  1.20) -- (2.20, 1.20);    % the third row line
\draw (-1.60,  0.80) -- (4.20, 0.80); % % the fourth row line
\draw (-1.20,  0.40) -- (4.20, 0.40); % the fifth row line
\draw (-1.20,  0.00) -- (4.20, 0.00); % the sixth row line
\draw (-1.60,  -0.40) -- (4.20, -0.40); % the seventh row line
\draw (-1.60,  -1.00) -- (4.20, -1.00); % the eithth row line

\draw(-1.60, 1.90) --(-1.60, -1.0); % the first colum line
\draw( -1.2, 1.90 ) -- (-1.2, -0.40 ); % the second colum line

\draw(-0.80, 1.90)--(-0.80, -1.0); % the third colum line
\draw(0.10, 1.90) --(0.10, -1.0); % the fourth colum line
\draw(2.20, 1.90) --(2.20, -1.0); % the fifth colum line
\draw(4.20, 1.90)--(4.20,  -1.0); % the sixth colum lien

\draw (-0.990, 1.750) node {\small{Region}};
\draw (-0.3, 1.730) node {\small{$w_i$}};
\draw (1.15, 1.730) node {\small{$D_{x_n} w_i$}};
\draw(3.2, 1.74) node {\small{$ p$}};

\draw(-1.40, 1.2) node {$t < 1$};
\draw(-1.40, 0.2) node {$t > 1$};

\draw(-0.99, 1.40) node {$  \small{\tilde D_+}$};
\draw(-0.990, 1.0) node {$  \small{\tilde D_-}$};

\draw(-0.99, 0.60) node {$  \small{\tilde D_+}$};
\draw(-0.990, 0.20) node {$  \small{\tilde D_-}$};
\draw(-0.990, -0.20) node {$  \small{\tilde D_0}$};

\draw (1.15, 1.4) node {\small{$ \approx - ( 1-t)^{\frac12}  \phi' (1-t) \approx (1-t)^{-\frac12 -a }$}};

  \draw(-0.38, 1.0) node { \small{ $\approx \phi (1-t) $}};
  \draw (1.15, 1.0) node {\small{$ \approx -x_n \phi'(x_n^2) \approx x_n^{-1 -2a}$} };
%    \draw (1.2, 0.65) node {\small{$\mbox{but} \,\,  \notin L^{\frac3{1 +2a}} $} };
  \draw (3.2, 1.2) node {\small{$ \approx -\phi' (1-t) \approx (1 -t)^{-1 -a}$} };

   \draw (1.15, 0.60) node {\small{$ \approx -(t-1)^{-\frac12} \phi(t-1) \approx -(t-1)^{-\frac12-a}$} };

   \draw (-0.35, 0.220) node {\small{$ \approx   \phi(t-1) $} };
      \draw (1.15, 0.20) node {\small{$ \approx -x_n \phi'(x_n^2)\approx x_n^{-1 -2a}$} };
            \draw (3.2, 0.20) node {\small{$ \approx -x_n(t-1)^{-\frac12 } \phi(t-1) \approx (t -1)^{-a} $} };

   \draw (-0.35, -0.220) node {\small{$ \approx   \phi(t-1) $} };
              \draw (3.2, -0.20) node {\small{$ \approx -x_n(t-1)^{-\frac12 } \phi(t-1) \approx (t -1)^{-a} $} };

   \draw (-1.20, -0.620) node {\small{ Integrable} };
      \draw (-1.2, -0.820) node {\small{ property} };
      
   \draw (-0.35, -0.620) node {\small{$\in L^p, \,\, p < \frac3{2a} $} };
      \draw (-0.35, -0.820) node {\small{$\mbox{but} \,\, \notin L^{\frac3{2a}}  $} };
      
  \draw (1.15,  -0.720) node {\small{$\in L^p, \,\, p < \frac3{1 + 2a}  \quad \mbox{but} \,\, \notin L^{\frac3{1 +2a}}$} };
   \draw (3.2,  -0.720) node {\small{$\in L^p, \,\, p < \frac1{1 + a}  \quad \mbox{but} \,\, \notin L^{\frac1{1 +a}}$} };
   
   \draw (1.2, -1.2) node {$ \big( \phi (t) = t^{-a}, \,\, 0 < a < 1 \big)$};
\end{tikzpicture}
\vspace{0.5mm}

\begin{rem}
When it comes to global estimate of the solution $w$ in the half-space, 
it is worth mentioning that $w$ belongs to the same spaces as shown in Remark \ref{rem0929-1}. For example, in the case $\phi= |\ln t|^{-1}$,  since the boundary data $g_{n}(t)$ in  \eqref{0502-6} is Dini-continuity for all $ 0 < t < \infty$, it follows due to \cite{CC1} that $w$ is boudned in the half space, i.e. 
$w \in L^\infty (\R_+ \times (0, \infty))$ (see also \cite{CC2} and \cite{CCK}).
Moreover, since $\phi\in L^{\infty}\cap \dot B^{\frac{1}3}_{3}$, using the results in \cite{CJ1} and \cite{CJ2}, one can see that
\begin{align}\label{0402-10}
\| \nabla w\|_{L^{3}(\R_+ \times (0,\infty)) } &  \leq c \bke{\| \phi \|_{L^3 (0, \infty)} +   \| \phi  \|_{\dot B^{\frac{1}3}_{3} (0,\infty)}}< \infty.
\end{align}
The other cases such as $\phi(t)=\abs{\ln t}$ or $t^{-a}$ can be treated similarly, and thus we skip their details.
Our results can be understood that temporal singular behaviors caused by compactly supported boundary data are immediately transfered to all over the place and, in addition, unlike the interior  local smoothing in spatial variables is not available near boundary. We remark that spatial singular behavior of compactly supported boundary data is not delivered to other places and local smoothing effect is available in any region away from the data (see \cite{KM24}).
\end{rem}

\begin{rem}
We remark that authors showed in \cite{CK20} that there is a very weak solution of \eqref{Stokes-10} or \eqref{nse-30} so that $\nabla w $ is not squre integrable, i. e.,
%\begin{equation}\label{very-weak}
$\norm{\nabla w}_{L^{2}(Q_1^+)}=\infty$, although
$\norm{w}_{L^2(Q_1^+)}<\infty$.
%\end{equation}
It turns out that it is a special case  almost close to $a=\frac{1}{4}$ and thus it is observable that $  w$ is not bounded and $\pi$ is not  integrable in $Q_1^+$.
\end{rem}

Next, we will establish existence of singular solutions of the Navier-Stokes equations as in the Stokes system. When it comes to the boundary data for temporal variable, we consider only a specific case in the Assumption \ref{Assume-phi}, which satisfies the following:
\begin{equation}\label{prototype-phi}
\phi(t)=t^{-a},\qquad t\in (0, \frac{1}{2}],\quad a\in (0,1),
\end{equation}
since we think that it is most interesting. The other cases such as $\phi(t)=\abs{\ln s}$ or $\phi(s)=\abs{\ln s}^{-1}$ could be treated similarly, and thus we omit the details of those cases (see Remrak \ref{NSE-log}).

Let $\varphi_1 \in C_c^\infty({\mathbb R}^{n})$ be a  cut-off function satisfying $ \varphi_1 \geq 0$,  ${\rm supp} \, \varphi_1 \subset B_\frac1{\sqrt{2}}$ and $\varphi_1 \equiv 1 $ in $B_{\frac38}$. Also, let $\varphi_2 \in C_c^\infty(-\infty, \infty)$ be a  cut-off function satisfying $ \varphi_2 \geq 0$,  ${\rm supp} \, \varphi_2 \subset (-3, 3)$ and $\varphi_2 \equiv 1 $ in $(-2, 2)$. Let $\varphi (x,t) =\varphi_1 (x) \varphi_2 (t)$.
Let $U = w\varphi$ and $\Pi = \pi \varphi$ such that $U|_{Q^+_\frac14} =w$ and $\Pi|_{Q^+_\frac14} =\pi$.

We consider the following perturbed Navier-Stokes equations in
$\R_+\times (0,\infty)$:
\begin{equation}\label{CCK-Feb7-10}
v_t-\Delta v+\nabla q+{\rm div}\,\left(v\otimes v+v\otimes
U+U\otimes v\right)=-{\rm div}\,(U\otimes U), \quad {\rm
div} \, v =0
\end{equation}
with homogeneous initial and boundary data, i.e.
\begin{equation}\label{pnse-bdata-20}
v(x,0)=0\,\,\mbox{in}\,\,\mathbb R^n_+,\qquad v(x,t)=0 \,\,\mbox{on} \,\,\{x_n=0\}\times [0, \infty).
\end{equation}

\begin{prop}\label{theo1001}
Suppose that $\phi$ satisfies Assumption \ref{Assume-phi} and \eqref{prototype-phi}.
Let $ 0 < a < \frac3{2(n+2)}$ and $ \frac{n+2}2 < q < \frac3{4a}$. 
Then, we have $ \| U\|_{L^{2q} } < c\al$, where $\al > 0$ is defined in \eqref{0502-6}.
%{\color{red}Suppose $ \| w\|_{L^{2q} } < c\al $ for suffinciently small $\al > 0$, where $\al > 0$ is defined in \eqref{0502-6}.}  
If $\alpha$ is taken to be sufficiently small, then there is a solution $ v$  of \eqref{CCK-Feb7-10}-\eqref{pnse-bdata-20} such that
\begin{align}\label{pNSE-100}
\| \na v\|_{L^q (\R_+ \times (0, 1))}  + \|  v\|_{L^{2q} (\R_+ \times (0, 1))} \leq c (\| w\|_{L^q(Q^+_2)},  \| w\|_{L^{2q}(Q^+_2)} ).
\end{align}
In case that $ 0 < a < \frac3{4(n+2)}$, we obtain  $ \|  w\|_{L^{2q}}  < c\al$ for $ n+2 < q < \frac3{4a}$. 
If $\alpha$ is taken to be sufficiently small, then
%{\color{red}if $ \|  w\|_{L^{2q}}  < c\al$ with $ n+2 < q < \frac3{4a}$ for suffinciently small $\al > 0$, where $\al > 0$ is defined in \eqref{0502-6}}, 
$v$ satisfies
\begin{align}\label{pNSE-200}
 \|  v\|_{L^{\infty} (\R_+ \times (0, 1))}\leq c ( \| w\|_{L^{q}(Q^+_2) }, \|  w\|_{L^{2q}(Q^+_2) } ) ).
\end{align}
\end{prop}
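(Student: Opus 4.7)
The plan is to combine the integrability of $w$ furnished by Remark \ref{rem0929-1}(3) with a standard Banach fixed-point argument applied to the perturbed Navier--Stokes system \eqref{CCK-Feb7-10}--\eqref{pnse-bdata-20}.

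First, the bound $\|U\|_{L^{2q}}\le c\alpha$ is an immediate consequence of Remark \ref{rem0929-1}(3) and the linear dependence of $w$ on the parameter $\alpha$ in the boundary data \eqref{0502-6}. Since $U=w\varphi$ with $\varphi$ smooth and compactly supported, the space--time $L^{2q}$ norm of $U$ is controlled by $\|w\|_{L^{2q}(Q^+_2)}$ together with the global integrability noted after Remark \ref{rem0929-1}. Remark \ref{rem0929-1}(3) asserts $w\in L^p$ for every $p<3/(2a)$, so $2q<3/(2a)$ gives the required estimate. The additional restriction $q>(n+2)/2$ (respectively $q>n+2$) is compatible with $q<3/(4a)$ precisely when $a<3/(2(n+2))$ (respectively $a<3/(4(n+2))$), matching the hypothesis.

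Second, I would run a contraction in the space
\[
X_q:=\Bigl\{v\ :\ \mathrm{div}\,v=0,\ v|_{\{x_n=0\}}=0,\ v|_{t=0}=0,\ \|v\|_{X_q}:=\|\nabla v\|_{L^q}+\|v\|_{L^{2q}}<\infty\Bigr\}
\]
on $\mathbb R^n_+\times(0,1)$. Define $T:X_q\to X_q$ by taking $T(v)$ to be the solution of the linear non-stationary Stokes system in $\mathbb R^n_+$ with zero initial and boundary data and divergence-form force $-\mathrm{div}\,G[v]$, where
\[
G[v]:=v\otimes v+v\otimes U+U\otimes v+U\otimes U.
\]
Maximal regularity of the Stokes operator on the half space with no-slip boundary in $L^q$, together with the parabolic Sobolev embedding valid in the subcritical regime $q>(n+2)/2$, yields
\[
\|T(v)\|_{X_q}\le c\|G[v]\|_{L^q}\le c\bigl(\|v\|_{L^{2q}}^2+2\|v\|_{L^{2q}}\|U\|_{L^{2q}}+\|U\|_{L^{2q}}^2\bigr)\le c\bigl(\|v\|_{X_q}^2+2\alpha\|v\|_{X_q}+\alpha^2\bigr),
\]
and an analogous estimate for $T(v_1)-T(v_2)$. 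Therefore for $\alpha$ sufficiently small, $T$ is a contraction on the ball $\{v\in X_q:\|v\|_{X_q}\le C_0\alpha\}$; its unique fixed point solves \eqref{CCK-Feb7-10}--\eqref{pnse-bdata-20} and satisfies \eqref{pNSE-100}.

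Third, to obtain \eqref{pNSE-200} under the sharper hypothesis $q>n+2$, I would upgrade the argument to $X_q\cap L^{\infty}$. In this supercritical regime, the parabolic Morrey embedding applied to the linear Stokes problem with divergence-form force in $L^q$ gives $\|T(v)\|_{L^{\infty}}\le c\|G[v]\|_{L^q}$, so the $L^{\infty}$ norm of the fixed point is controlled by exactly the same contraction. The main obstacle I anticipate is the maximal-regularity-plus-embedding estimate for the linearized Stokes system with divergence-form force and no-slip boundary in the half space: once this is in hand, either through the kernel representation \eqref{rep-bvp-stokes-w}--\eqref{Poisson-tensor-K} combined with parabolic Riesz potential estimates or through abstract $L^q$ theory, the contraction mapping closes routinely and the product bounds on $G[v]$ are immediate.
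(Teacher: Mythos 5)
Your proposal is correct and mirrors the paper's argument: you run a fixed-point scheme on the perturbed system \eqref{CCK-Feb7-10}, which is the same as the paper's Picard iteration $v^{m+1}$ followed by a Cauchy estimate for $V^{m+1}=v^{m+1}-v^m$; the two are just different presentations of the same contraction. The one place worth flagging is your phrase ``maximal regularity of the Stokes operator \ldots\ together with parabolic Sobolev embedding'': because the forcing is in divergence form, the standard $W^{2,1}_q$ maximal regularity does not apply, and a naive embedding starting only from $\nabla W\in L^q$ does not close. The paper instead invokes three specific linear estimates adapted to divergence-form forcing with no-slip boundary: Proposition \ref{theo0503} (from \cite{CK20}) for $\|\nabla W\|_{L^q}\le c\|\mathcal F\|_{L^q}$, Proposition \ref{thm-stokes} (from \cite{CJ}) for the $L^{q_1}(L^{p_1})\to L^q(L^p)$ bound on the very weak solution with the scaling $n/p_1+2/q_1=n/p+2/q+1$ (this produces the $T^{\frac12(1-\frac{n+2}{2q})}$ factor that lets one trade down to $L^{2q}$ on a finite time interval), and Proposition \ref{theoexternel-boundary} (from \cite{CCK}) for $\|W\|_{L^\infty}\le cT^{\frac12(1-\frac{n+2}{p})}\|\mathcal F\|_{L^p}$ when $p>n+2$. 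You correctly anticipate exactly this gap and point to the kernel representation \eqref{rep-bvp-stokes-w}--\eqref{Poisson-tensor-K} as the way to obtain these estimates; filling it in with the three cited propositions is precisely what the paper does, after which your contraction closes as you describe.
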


The proof of Proposition \ref{theo1001} will be given in section \ref{NSequations}.

We then set $u:=v+U$ and $p =\pi + q$, which becomes a weak solution of the
Navier-Stokes equations in $\tilde{Q}_+=B_{\frac{1}{2}}^+\times (0,2)$, namely
\begin{equation}\label{NSE-March5}
u_t-\Delta u+\nabla p=-{\rm div}\,\left(u\otimes u\right),\qquad
{\rm div} \, u=0 \quad Q^+_1,
\end{equation}
with boundary data $u(x,t)=0$ on $\{x_n=0\}$ and homogeneous
initial data $u(x,0)=0$.

With the aid of Proposition \ref{theo1001}, we can construct a solution of the Navier-Stokes equations such that 
its behaviors near boundary  are more or less the same as those of the Stokes equations mentioned in Theorem \ref{theo1114-1}. Since its verification is straightforward by combining Theorem \ref{theo1114-1} and Proposition \ref{theo1001}, we just state it without providing its details.

\begin{theo}\label{mainthm-NSE}
Suppose that $\phi$ satisfies Assumption \ref{Assume-phi} and and \eqref{prototype-phi} with an additional restriction that $ 0 < a <   \frac{3}{2(n+2)}$ for $\phi(t)=t^{-a}$. Then, there exists 
a solution of the Navier-Stokes equations, $u$, in  \eqref{NSE-March5} such that it satisfies the same regularity properties mentioned in Remark \ref{rem0929-1}.
Furthermore, if $ 0 < a < \frac3{4(n+2)}$, then $u$ satisfies pointwise estimates \eqref{eq1218-10}, \eqref{eq0828-5-1} and  \eqref{eq0828-5-1-1}.
\end{theo}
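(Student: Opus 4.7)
The theorem is stated as a direct consequence of Theorem \ref{theo1114-1} and Proposition \ref{theo1001}, so my proposal is an assembly argument rather than a new estimate. The key observation is that by the construction preceding \eqref{CCK-Feb7-10}, $U=w\varphi$ agrees with $w$ on $Q^+_{1/4}$ and in particular on the localised region $B^+_{1/2}\times(1-t_1,1+t_1)$ where the singular behaviour of Theorem \ref{theo1114-1} is observed. Consequently, setting $u=v+U$ as in \eqref{NSE-March5} reduces the problem to controlling the perturbation $v$ against the known Stokes profile.

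For the integrability statements of Remark \ref{rem0929-1}, my plan is as follows. The upper bounds on integrability of $u$ and $\nabla u$ in every $L^p$ class listed in Remark \ref{rem0929-1} follow immediately from the triangle inequality: Theorem \ref{theo1114-1} supplies the bound on $w$ (hence on $U$), while Proposition \ref{theo1001} supplies the bound on $v$ (note that $2q$ ranges exactly through the interval $(n+2,3/(2a))$ as $q$ traverses $((n+2)/2,3/(4a))$, matching the sharp exponent coming from $\phi(t)=t^{-a}$ through $0<a<3/(2(n+2))$). For the pressure $p=\pi+q$, the contribution of $\pi$ is controlled by Theorem \ref{theo1114-1} while $q$ (the pressure associated with $v$) inherits the integrability from the $L^q$ bound on $\nabla v$ via the standard Stokes pressure representation. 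The non-integrability at the sharp exponent is obtained by contradiction: if $u$ belonged to $L^{3/(2a)}$ locally near the singular set, then since $\alpha$ is small, the perturbation $v$ (whose $L^{2q}$-norm tends to $0$ with $\alpha$) would have to cancel the singular profile of $w$ described in \eqref{eq1219-3}, which is impossible because the pointwise sharp estimates in Theorem \ref{theo1114-1} force a definite temporal singularity at $t=1$ for $w$ that no element of $L^{2q}_{t,x}$ with $2q<3/(2a)$ can absorb in the limit.

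For the pointwise estimates \eqref{eq1218-10}, \eqref{eq0828-5-1}, \eqref{eq0828-5-1-1} under the stronger restriction $a<3/(4(n+2))$, Proposition \ref{theo1001} gives \eqref{pNSE-200}, i.e.\ $\|v\|_{L^\infty(\mathbb R^n_+\times(0,1))}\leq C\alpha$ after choosing $\alpha$ sufficiently small. Since each of the pointwise bounds in Theorem \ref{theo1114-1} involves $\phi(|1-t|)\to\infty$ as $t\to 1$, and since the lower bounds \eqref{eq0828-5-1}, \eqref{eq0828-5-1-1} are bounded below by $c\phi(|1-t|)$ in the regions $\widetilde{\calD}_-$, $\calD_0$, I would shrink $t_1$ (or equivalently restrict to a slightly smaller time window) so that $c\phi(|1-t|)\geq 2C\alpha$ there. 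In that window the estimate
\[
|u_i(x,t)-w_i(x,t)|=|v_i(x,t)|\leq C\alpha\leq \tfrac12 c\phi(|1-t|)
\]
transfers both upper and lower pointwise bounds of Theorem \ref{theo1114-1} from $w$ to $u$, up to adjusting the multiplicative constant.

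The only genuine difficulty I anticipate is the non-integrability part in the first case, where only $L^{2q}$ control on $v$ (and not a pointwise bound) is available. The clean way I would phrase that step is to use that the singular temporal profile of $w$ in \eqref{eq0828-5-1}--\eqref{eq0828-5-1-1} concentrates on a set of positive measure in $B^+_{1/2}\times(1-t_1,1+t_1)$ on which $|w_i|\gtrsim \phi(|1-t|)$, and that the perturbation $v$, having $L^{2q}$-norm going to zero with $\alpha$, cannot reduce this contribution below the sharp integrability threshold; otherwise, by a standard $L^p$ splitting of the integral $\int |u|^{3/(2a)}$ into the region where $|v|\leq |w|/2$ and its complement, one contradicts either the lower bound for $w$ or the smallness of $\|v\|_{L^{2q}}$.
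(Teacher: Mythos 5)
Your proposal takes the same route the paper prescribes (the paper itself only states that Theorem \ref{mainthm-NSE} follows ``by combining Theorem \ref{theo1114-1} and Proposition \ref{theo1001}'' without giving details), and your pointwise argument is correct: with $0<a<\frac{3}{4(n+2)}$, Proposition \ref{theo1001} gives $\|v\|_{L^\infty}\leq C\alpha$, so shrinking $t_1$ so that $c\,\phi(|1-t|)\geq 2C\alpha$ on the time window transfers \eqref{eq1218-10}--\eqref{eq0828-5-1-1} from $w$ to $u=w+v$ with adjusted constants. (One cosmetic point: $U=w$ only on $B^+_{3/8}\times(-2,2)$, not on $B^+_{1/2}$, so the final statement should be read on a slightly shrunk cylinder $B^+_{3/8}\times(1-t_1,1+t_1)$.)

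The part that needs tightening is the non-membership of $u_i$ at the critical exponent. Your proposed split of $\int|u|^{3/(2a)}$ into $\{|v|\leq|w|/2\}$ and its complement does not close: on $\{|v|>|w|/2\}$ you would need to bound $\int|w|^{3/(2a)}$ by $\|v\|_{L^{2q}}^{2q}$, but Proposition \ref{theo1001} only provides $2q<\frac{3}{2a}$, so the Chebyshev bound goes in the wrong direction and the complement's contribution is uncontrolled. The clean fix uses the pointwise lower bound of Theorem \ref{theo1114-1} rather than the $L^{3/(2a)}$ non-membership as a black box. Since $w_i\geq c\phi(|1-t|)=c|1-t|^{-a}$ on $\tilde{\calD}_-$ (resp.\ $w_i\leq -c\phi(t-1)$ on $\tilde{\calD}_-\cup\calD_0$), one has in fact $w_i\notin L^{1/a}(Q)$, because the level set measure satisfies $|\{|w_i|>\lambda\}\cap Q|\gtrsim\lambda^{-1/a}$. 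Now choose $q$ with $\max\{\frac{n+2}{2},\frac{1}{2a}\}<q<\frac{3}{4a}$, which is a nonempty interval precisely because $a<\frac{3}{2(n+2)}$; then $2q>\frac1a$, so $v\in L^{2q}(Q)\subseteq L^{1/a}(Q)$ on the bounded cylinder. The triangle inequality then gives $u_i=w_i+v_i\notin L^{1/a}(Q)\supseteq L^{3/(2a)}(Q)$, which is the desired non-membership. (For $\nabla u$ and the pressure the situation is even simpler: $\nabla v\in L^q$ for some $q>\frac{3}{1+2a}$ since $\frac{3}{4a}>\frac{3}{1+2a}$ for $a<\frac12$, and the perturbed pressure lies in some $L^r$, $r>1>\frac{1}{1+a}$, so the triangle inequality suffices directly.)

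So: same strategy as the paper, correct on the pointwise side, but the $L^p$-splitting you sketch for $u_i$ at the critical exponent should be replaced by the Chebyshev/level-set comparison above (or equivalently, by using the sharper $L^{1/a}$ non-membership of $w_i$ coming from the pointwise lower bound, against which $v$ is subcritical).
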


\begin{rem}
The pointwise estimate of $u$ in Theorem \ref{mainthm-NSE} is the combination of \eqref{pNSE-200} for $v$ in Proposition \ref{theo1001} and \eqref{eq1218-10}-\eqref{eq0828-5-1-1} for $w$ in Theorem \ref{theo1114-1}. However, 
the pointwise estimate of $\nabla u$ is not clear because of the lack of that of $\nabla v$, and instead, $L^q$ estimate is only available up to $q<\frac{3}{4a}$ as shown in \eqref{pNSE-100}. Thus, we leave it as an open question on construction of solutions of the Navier-Stokes equations whose normal derivatives satisfies pointwise estimates near boundary given in Theorem \ref{theo1114-1}. 
\end{rem}

\begin{rem}\label{NSE-log}
In the case that $\phi(t)=\abs{\ln s}$ or $\phi(s)=\abs{\ln s}^{-1}$, the estimate \eqref{pNSE-100} is available for any expoent $q<\infty$. Thus, it is also available to construct
a singular solution of the Navier-Stokes equations with the same regularity properties as the Stokes system mentioned in Remark \ref{rem0929-1}.
\end{rem}

This paper is organized as follows: In section \ref{prelim}, proofs of some lemmas are provided.
Section \ref{pf-SS} is devoted to proving Theorem \ref{theo1114-1}.
We present the proof of Proposition \ref{theo1001} in Section \ref{NSequations}.

%%%%%%%%%%%%%%%%%%%%%%%%%%%%%%%%%%%%%%%%%%%
%%%%%%%%%%%%%%%%%%%%%%%%%%%%%%%%%%%%%%%%%%%
%%%%%%%%%%%%%%%%%%%%%%%%%%%%%%%%%%%%%%%%%%%

\section{Preliminaries}
\label{prelim}
\setcounter{equation}{0}

In this section, we mainly provide proofs of lemmas, which are useful for our analysis.
Before we do that, we first define the notion of weak solutions of the Stokes equations in the half space.
\begin{defin}\label{stokes-def}
Let $\phi$ satisfy Assumption \ref{Assume-phi}.
We say that $w\in L^1_{\rm{loc}}(0, \infty; W^{1,1}_{\rm{loc}}({\mathbb R}^n_+))$ is a weak solution of the Stokes equations \eqref{StokesRn+} in the half-space with boundary data $g$ given in \eqref{0502-6} and \eqref{boundarydata}, if the following equality holds:
\begin{align*}%\label{Stokes-bvp-220}
\int^T_0\int_{\hR}w\cdot \Phi_t dxdt+\int^T_0\int_{\hR }w\cdot \Delta \Phi
dxdt+\int^T_0 \int_{\pa \hR} g_n \cdot D_{x_n}\Phi  dx' dt=0
\end{align*}
for each $\Phi\in C^2_c(\overline{\hR}\times [0,T])$ with
\begin{align}\label{0528-1}
\mbox{\rm div } \Phi=0,\quad \Phi\big|_{\pa \hR \times (0, T)}=0, \quad
\Phi(\cdot,T) =0.
\end{align}
In addition, for each $\Psi\in C^1_c(\overline{\hR})$ with
 $\Psi \big|_{\pa \hR }=0$
\begin{equation}\label{Stokes-bvp-2200}
\int_{\hR} u(x,t) \cdot \na \Psi(x) dx =0  \quad  \mbox{ for all}
\quad 0 < t< T.
\end{equation}
\end{defin}

For notational convenition, we denote for  fixed $\al \geq 0$ and $k=0,1$
\begin{align}\label{notation-10}
\Psi_{\phi, k}(t):&=
%\int_0^1 ( t-s)^{-\al} \phi(1-s) ds,&\quad \calD\Psi_{\alpha, \phi}(t):=
\int_0^{\min(1,t)}( t-s)^{-\al}  \phi^{(k)}(1-s) ds,%\qquad k=0,1,???
%\qquad {\color{red} ??? \int_0^{\min(1,t)} ( t-s)^{-\al}  \phi^{(k)}(1-s) ds}
\\
\label{notation-20}
{\tilde{\Psi}}_{\phi, k}(t):
%=\int_0^t (t-s)^{-\alpha} e^{-\frac{x_n^2}{t-s}}  \phi ( 1-s)ds,&\quad \calD{\tilde\Psi}_{\alpha, \phi}(t):
&=\int_0^{\min(1,t)}  (t-s)^{-\alpha} e^{- \frac{x_n^2}{4(t-s)}}   \phi^{(k)}(1-s) ds. %\qquad k=0,1.
\end{align}
The following is a key lemma to obtain main results.
\begin{lemm}\label{key-lemma}
Let  $ 0  <  \al$, $\al \neq 1$, $\alpha+a\neq 1$
 and  $\frac34 < t < \frac54$.
%Suppose that $\phi$ satisfies \ref{Assume-phi}
Suppose that $\phi$ satisfies Assumption \ref{Assume-phi}.
Then, $\Psi_{\phi, k}(t)$ and ${\tilde\Psi}_{\phi, k}(t)$ in \eqref{notation-10}-\eqref{notation-20} sastisfy following:
\begin{itemize}
\item[(i)] Let $1<t<\frac54$.  Then,
\begin{align}
\label{phi-5}
\Psi_{\phi,0}(t)
%\approx  \max(1, (t-1)^{ 1-\al} \phi (t-1)).
\approx\left\{\begin{array}{cl}\vspace{2mm}
 (t-1)^{ 1-\al} \phi(t-1)
 \quad &\mbox{ if }\,\,\al+a>1,\\
 1 \quad &\mbox{ otherwise }.
 \end{array}
 \right.
\end{align}
\begin{equation}\label{March02-100-10}
\Psi_{\phi,1}(t)\approx (t-1)^{ 1-\al} \phi'(t-1).
\end{equation}
%\item[(ii)] Let $1<t<\frac54$. % and $\alpha\ge 1$.
%Then,
\begin{align}
\label{tildephi-15} &  {\tilde\Psi}_{\phi, 0}(t)
%\approx
%\max(1,  \bke{\abs{1-t}\vee x_n^2}^{1-\alpha } \phi\bke{\abs{1-t}\vee x_n^2})
\approx\left\{\begin{array}{cl}\vspace{2mm}
\bke{(t-1) \vee x_n^2}^{1-\alpha } \phi\bke{(t-1) \vee x_n^2})
 \quad &\mbox{ if }\,\,\al+a>1,\\
 1 \quad &\mbox{ otherwise }.
 \end{array}
 \right.
\end{align}

\begin{align}
\label{tildephi-10-1} {\tilde\Psi}_{\phi, 1}(t) &\approx
((t-1)\vee x_n^2)^{1-\alpha } \phi'( (t-1)\vee x_n^2).
\end{align}

\item[(ii)] Let $\frac34<t<1$.  Suppose that $0<\alpha<1$. Then,
\begin{align}
\label{phi-50}
\Psi_{\phi,0}(t)
%\approx  \max(1,  ( 1-t)^{1 -\al} \phi ( 1-t) )
\approx\left\{\begin{array}{cl}\vspace{2mm}
 (1 -t)^{ 1-\al} \phi(1-t)
 \quad &\mbox{ if }\,\,\al+a>1, \\
 1 \quad &\mbox{ otherwise }.
 \end{array}
 \right.
\end{align}

\begin{equation}\label{March02-100-20}
\Psi_{\phi,1}(t)\approx (1-t)^{ 1-\al} \phi'(1-t).
\end{equation}

\begin{align}
\label{tildephi-25} &  {\tilde\Psi}_{\phi, 0}(t)
%\approx \max( 1,
% \bke{\abs{1-t}\vee x_n^2}^{1-\alpha } \phi\bke{\abs{1-t}\vee x_n^2}).\\
 \approx\left\{\begin{array}{cl}\vspace{2mm}
 \max(1,  x_n^{-2\alpha +2} \phi(x_n^2))  + \phi( 1-t)  (1-t)^{2-\al} x_n^{ -2}
 e^{-\de \frac{x_n^2}{1-t}} \quad &\mbox{ if }\,\, 1-t< x_n^2, \quad \al  > 0,\\
 \vspace{2mm}
 \phi( 1-t)  x_n^{2 -2\alpha} \quad &\mbox{ if }\,\, x_n^2 < 1-t,   \quad \al > 1,\\
 \max(1, (1 -t)^{-\alpha +1} \phi(1-t))  \quad &\mbox{ if }\,\, x_n^2 < 1-t,    \quad  \al < 1.
 \end{array}
 \right.
\end{align}

\begin{align}
\label{tildephi-25-2} &  {\tilde\Psi}_{\phi, 1}(t)
%\approx \max( 1,
% \bke{\abs{1-t}\vee x_n^2}^{1-\alpha } \phi\bke{\abs{1-t}\vee x_n^2}).\\
 \approx\left\{\begin{array}{cl}\vspace{2mm}
 x_n^{-2\alpha +2} \phi' (x_n^2) +\phi'( 1-t)  (1-t)^{2-\al} x_n^{ -2}
 e^{-\de \frac{x_n^2}{1-t}}   \quad &\mbox{ if }\,\, 1-t< x_n^2, \quad \al  > 0,\\
 \vspace{2mm}
 \phi'( 1-t)  x_n^{2 -2\alpha} \quad &\mbox{ if }\,\,x_n^2 < 1-t,   \quad \al > 1,\\
 (1 -t)^{-\alpha +1} \phi'(1-t)  \quad &\mbox{ if }\,\, x_n^2 < 1-t,    \quad  \al < 1.
 \end{array}
 \right.
\end{align}

\end{itemize}

\end{lemm}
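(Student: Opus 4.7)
The plan is to reduce both integrals to canonical form by a single substitution, and then decompose the domain into regions where either $\phi$ is essentially frozen or the weights $(t-s)^{-\alpha}$ and $e^{-x_n^2/(4(t-s))}$ scale algebraically. In part (i), with $\varepsilon = t - 1 > 0$, the substitution $u = 1-s$ yields
\begin{align*}
\Psi_{\phi,k}(t) &= \int_0^1 (\varepsilon + u)^{-\alpha}\phi^{(k)}(u)\,du, \\
\tilde\Psi_{\phi,k}(t) &= \int_0^1 (\varepsilon + u)^{-\alpha} e^{-\frac{x_n^2}{4(\varepsilon+u)}}\phi^{(k)}(u)\,du.
\end{align*}
In part (ii), with $\eta = 1-t > 0$, the same substitution gives $\Psi_{\phi,k}(t) = \int_\eta^1 (u-\eta)^{-\alpha}\phi^{(k)}(u)\,du$ and similarly for $\tilde\Psi$. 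In each case the singularity of the integrand is moved to an endpoint, which is exactly the setting where the assumed scaling properties of $\phi$ apply.

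For $\Psi_{\phi,k}$ in part (i), I split $(0,1)$ at $u=\varepsilon$. On $(0,\varepsilon)$ we have $\varepsilon+u \approx \varepsilon$, so the inner integrand becomes $\varepsilon^{-\alpha}\phi^{(k)}(u)$, and \eqref{phi-assume-v17} (for $k=0$) produces a contribution of order $\varepsilon^{1-\alpha}\phi(\varepsilon)$, while a direct computation with \eqref{phi-assume-v16} yields $\varepsilon^{1-\alpha}\phi'(\varepsilon)$ for $k=1$. On $(\varepsilon,1)$, $\varepsilon+u \approx u$, and \eqref{phi-assume-v15} (resp.\ \eqref{phi-assume-v16}) applied to $\int_\varepsilon^1 u^{-\alpha}\phi^{(k)}(u)\,du$ gives $\max(1,\varepsilon^{1-\alpha}\phi(\varepsilon))$ or $\varepsilon^{1-\alpha}\phi'(\varepsilon)$ respectively. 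The dichotomy $\alpha+a \gtrless 1$ in \eqref{phi-5} records only which side of the max dominates, since for $\phi = u^{-a}$ one has $\varepsilon^{1-\alpha-a} \gg 1$ precisely when $\alpha+a>1$. In part (ii), I split $(\eta,1)$ at $u=2\eta$: on $(\eta,2\eta)$ freeze $\phi^{(k)}(u) \approx \phi^{(k)}(\eta)$ via \eqref{phi-assume-v10} and integrate the integrable weight $(u-\eta)^{-\alpha}$ (this is where $\alpha<1$ enters) to produce $\eta^{1-\alpha}\phi^{(k)}(\eta)$, while on $(2\eta,1)$ one argues exactly as in part (i).

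The analysis of $\tilde{\Psi}_{\phi,k}$ requires an additional decomposition keyed to $x_n$. In part (i), set $M = \varepsilon \vee x_n^2$: on $\{\varepsilon+u > M\}$ the Gaussian factor is $\approx 1$ and the $\Psi$-analysis applies with the singular scale $\varepsilon$ replaced by $M$, giving the leading term $M^{1-\alpha}\phi^{(k)}(M)$; on $\{\varepsilon+u < M\}$ the contribution is exponentially small (when $x_n^2 > \varepsilon$) or the range is empty (when $x_n^2 < \varepsilon$). In part (ii) with $\eta < x_n^2$, the region $u - \eta > x_n^2$ yields the bulk $\max(1, x_n^{2-2\alpha}\phi(x_n^2))$ as above. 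The delicate new contribution comes from $u$ close to $\eta$: in $\phi(\eta)\int_\eta^{2\eta}(u-\eta)^{-\alpha}e^{-x_n^2/(4(u-\eta))}\,du$, the substitution $y = x_n^2/(4(u-\eta))$ converts the integral into an incomplete Gamma integral of the form $c_\alpha (x_n^2)^{1-\alpha}\int_{x_n^2/(4\eta)}^\infty y^{\alpha-2}e^{-y}\,dy$; since $x_n^2/\eta$ is large, Laplace asymptotics at the lower endpoint give $\approx \eta^{2-\alpha}x_n^{-2}e^{-x_n^2/(4\eta)}$, which after multiplying by $\phi(\eta)$ matches the second term in \eqref{tildephi-25}. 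When $\eta > x_n^2$ the Gaussian factor is $\approx 1$ on the dominant range and the estimate collapses to the $\Psi$ one.

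I expect the main obstacle to be the $\tilde{\Psi}$ analysis in part (ii) when $\eta < x_n^2$, where both the bulk contribution at scale $x_n^2$ and the exponentially small but polynomially enhanced boundary contribution from $u \to \eta$ must be tracked simultaneously, and the Laplace asymptotics must be uniform across the three profiles permitted by Assumption \ref{Assume-phi}. A secondary subtlety is that for $k=1$ the integrals are formally improper at $u=0^+$ whenever $\phi(0^+)=\infty$; the stated estimates should be understood as asymptotic principal parts justified by \eqref{phi-assume-v16}, and careful sign tracking (the sign of $\phi'$ being fixed by Assumption \ref{Assume-phi}) is needed to recover the expressions of the lemma without spurious cancellation.
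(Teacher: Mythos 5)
Your proposal is correct and follows essentially the same route as the paper: the split points you choose (at $u=\varepsilon=t-1$ for part (i), at $u=2\eta=2(1-t)$ for part (ii)) coincide with the paper's splits at $s=2-t$ and $s=2t-1$, and you invoke the same hypotheses \eqref{phi-assume-v15}--\eqref{phi-assume-v17} together with the incomplete-Gamma/Laplace computation for the Gaussian factor. Performing the change of variables $u=1-s$ at the outset rather than after the split is only a presentational difference; you also correctly flag (which the paper leaves implicit by silently working with the truncated $\tilde{\Psi}_{\phi,1}(t;2-t)$ in case (i)) that for $k=1$ and $t>1$ the integral is improper at $u=0^+$ when $\phi(0^+)=\infty$, so the stated asymptotics must be read as principal parts.
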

%{\color{red}The proof of Lemma \ref{key-lemma} will be given in Appendix \ref{appen.cor3.prop1}.}

\begin{proof}
%In this section, we prove Lemma \ref{key-lemma}.  
We first treat the cases $\Psi_{\phi,0}$ and $\tilde \Psi_{\phi,0}$.
We first remind that
\begin{align}\label{eq1113-1}\begin{split}
&\frac12 ( 1-s) \leq  t-s \leq  (1-s), \qquad  \frac34 < t < 1 \qquad \mbox{for} \quad    0< s < 2t -1, \\
&\frac12( 1-s) \leq 1-t \leq  (1-s), \qquad  \frac34 < t < 1 \qquad \mbox{for} \quad    2t-1< s <  t, \\
&  ( 1-s) \leq  t-s \leq  2(1-s), \qquad  1 < t<\frac54 \qquad \mbox{for} \quad   0 < s < 2-t, \\
& \frac12( t-s) \leq t-1 \leq  (t-s), \qquad  1 <t < \frac54 \qquad \mbox{for} \quad    2-t< s <  1.
\end{split}
\end{align}

$\bullet$ {(The case $ 1 <  t  <\frac54$)}\,\,
We split the integral over $[0,1]$ into $[0, 2-t]$ and $[2-t , 1]$. Then, from $\eqref{eq1113-1}_{3,4}$ %\eqref{phi-assume-v13}, \eqref{phi-assume-v14}
and the change of variables, we have
\begin{align}\label{eq1017-1}
\begin{split}
 \Psi_{\phi,0}(t)
 %:=\int_0^1    (t -s)^{-\al} \phi( 1-s) ds
 &  \approx      \int_0^{2-t}   (1 -s)^{-\al} \phi( 1-s) ds + \int_{2-t}^1 (t -1)^{-\al} \phi( 1-s) ds,  \\
&=    \int^{1}_{t-1}     s^{-\al} \phi ( s) ds + (t -1)^{-\al}\int_0^{t-1}  \phi( s) ds.
%&=   \int^{1}_{t-1}     s^{-\al} \phi ( s) ds+(t -1)^{-\al}   \int_0^{t-1}  \phi ( s)   ds\\
%&\approx  \big( 1 + (t -1)^{1 -\al} \phi ( t-1),%+( t-1)^{1 -\al} \phi (t-1),
\end{split}
\end{align}
Applying \eqref{phi-assume-v15} and   \eqref{phi-assume-v17} in \eqref{eq1017-1},   we obtain
\eqref{phi-5}.

Using $\eqref{eq1113-1}_4$, we have  $e^{-\frac{x_n^2}{2(t-1)}} \leq e^{-\frac{x_n^2}{2(t-s)}} \leq e^{-\frac{2x_n^2}{(t-1)}} $ for $2 -t < s < 1$, namely
$e^{-\frac{x_n^2}{4(t-1)}} \leq e^{-\frac{x_n^2}{4(t-s)}}\leq e^{-\frac{x_n^2}{(t-1)}}$. Thus,
we note that
 $e^{-\frac{x_n^2}{4(t-s)}} \approx  e^{-\de \frac{x_n^2}{t-1}}$ for $2 -t < s < 1$ (see Notation  \ref{nota-10}).
Similarly, we split the following integral:
\begin{align}\label{eq1016-1}
\begin{split}
{\tilde\Psi}_{\phi,0}(t)
%:=&\int_0^1  (t-s)^{-\alpha} e^{-\frac{x_n^2}{t-s}}  \phi( 1-s)ds \\
& \approx %\int_0^{2-t}(1-s)^{-\alpha} e^{-\frac{x_n^2}{1-s}}  \phi( 1-s) ds+ \int_{2-t}^1 (t-1)^{-\alpha} e^{-\frac{x_n^2}{t-1}}  \phi( 1-s) ds  \\
%&  =
\int_{t-1}^1 s^{-\alpha} e^{-\frac{x_n^2}{4s}}  \phi( s) ds+   (t-1)^{-\alpha} e^{-\de_1\frac{x_n^2}{t-1}}  \int_0^{t-1}\phi( s) ds.
\end{split}
\end{align}
Due to  \eqref{phi-assume-v17},  we have
\begin{align}
\label{eq1016-2}
\begin{split}
  (t-1)^{-\alpha} e^{-\de_1 \frac{x_n^2}{t-1}} \int_0^{t-1} \phi  (s) ds
& \approx  (t -1)^{ -\alpha  +1} \phi (t-1)    e^{- \de_1 \frac{x_n^2}{t-1}}.
\end{split}
\end{align}
For the first term of the right-hand side of  \eqref{eq1016-1}, we  separate the cases that $x_n^2 \le t-1$  and $x_n^2 > t-1$.
 %From \eqref{eq1017-1}, \eqref{eq1017-4} and \eqref{eq1017-3}, we obtain  \eqref{eq2}.
In the case that $ x_n^2 \leq t-1$, since $e^{-\frac{x_n^2}{4s}} \approx 1$ for $ x_n^2 \leq  t -1  \leq s  <1$, we can see via \eqref{phi-assume-v15} that
\begin{align}
  \label{eq1016-3}
\int_{t-1}^1 s^{-\alpha} e^{-\frac{x_n^2}{4s}}  \phi( s) ds& \approx
 \int^{1}_{t-1}     s^{-\al} \phi ( s) ds
  \approx     \max (1, ( t-1)^{-\al +1} \phi(t-1)).
 \end{align}
Thus, if $ x_n^2 \leq t-1$, it follows from \eqref{eq1016-1},  \eqref{eq1016-2} and \eqref{eq1016-3} that
\begin{align}\label{tildephi-zero-20}
\begin{split}
{\tilde\Psi}_{\phi,0}(t)
  \approx    \max (1, ( t-1)^{-\al +1} \phi(t-1)).
 \end{split}
 \end{align}
Secondly, we treat the case that $ t-1 < x_n^2$. As in \eqref{eq1016-3}, we have
\begin{align}\label{eq1016-5}
\begin{split}
 %\int_0^{2-t}   (t-s)^{-A} e^{-\frac{x_n^2}{t-s}}  \phi( 1-s)ds
  \int_{t-1}^1 s^{ -\alpha} e^{-\frac{x_n^2}{4s}}  \phi( s)  ds
 & =   \int_{t-1}^{x_n^2} s^{-\alpha}   e^{-\frac{x_n^2}{4s}}    \phi(s) ds +  \int_{x_n^2}^1 s^{-\alpha}   e^{-\frac{x_n^2}{4s}}  \phi(s) ds.
  \end{split}
\end{align}
Since $ e^{-\frac{x_n^2}{4s}} \approx 1$ for $ x_n^2 < s<1 $,   it follows from    \eqref{phi-assume-v15} that
\begin{align}\label{eq1016-6}
\begin{split}
\int_{x_n^2}^1 s^{-\alpha}   e^{-\frac{x_n^2}{4s}}  \phi(s) ds \approx \int^1_{x_n^2} s^{-\alpha}  \phi(s) ds
\approx  \max(1,  x_n^{-2\alpha +2} \phi(x_n^2)).
\end{split}
\end{align}
This implies that
\begin{align}\label{eq1016-6-5}
 \max(1,  x_n^{-2\alpha +2} \phi(x_n^2))\lesssim \int_{t-1}^1 s^{ -\alpha}   e^{-\frac{x_n^2}{4s}} \phi( s)  ds.
\end{align}

On the other hand, noting that  $e^{-b} \leq c_m b^{-m}$ for any $ b > 0$ and $m \in {\mathbb N}$, we have
\begin{align}
\label{eq1016-7}
\begin{split}
\int_{t-1}^{x_n^2} s^{-\alpha}  e^{-\frac{x_n^2}{4s}}  \phi(s) ds
&  \leq  c\int_{t-1}^{x_n^2} s^{-\alpha}  (\frac{x_n^2}{4s})^{-m} \phi(s) ds
  \leq  c x_n^{-2m} \int_{t-1}^{x_n^2} s^{-\alpha + m } \phi(s) ds\\
 &  \leq  c x_n^{-2m}  x_n^{-2\alpha + 2m +2} \phi(x_n^2)
   \leq  c   x_n^{-2\alpha   +2} \phi(x_n^2).
 \end{split}
 \end{align}
Estimates \eqref{eq1016-5} -\eqref{eq1016-7} implies that   for  $ 1 < t  $ and $ t-1 < x_n^2<\frac12$
\begin{align}\label{eq1016-8}
 \int_{t-1}^1 s^{ -\alpha}     e^{-\frac{x_n^2}{4s}}  \phi(s) ds
 & \approx \max(1,  x_n^{-2\alpha +2} \phi(x_n^2)).
\end{align}
Adding up \eqref{eq1016-1},  \eqref{eq1016-2}, \eqref{tildephi-zero-20}  and \eqref{eq1016-8}, we get  \eqref{tildephi-15}.
\\
\\
$\bullet$ (The case $ \frac34 < t < 1$)\,\,
Noting that  $ \frac12 < 2t -1 < t$ and using $\eqref{eq1113-1}_1$,   \eqref{phi-assume-v11}, \eqref{phi-assume-v12}, \eqref{phi-assume-v15} %and \eqref{phi-assume-v17},
and change of variables, it follows that
\begin{align}\label{eq1017-9}
\begin{split}
 \Psi_{\phi,0}(t)
%\int_0^t    (t -s)^{-\al} \phi( 1-s) ds
& \approx    \int_0^{2t -1}   (1 -s)^{-\al} \phi( 1-s) ds + \int_{2t -1}^t    (t -s)^{-\al} \phi( 1-t) ds\\
&  =
\int^1_{2-2t}  s^{-\al}  \phi( s) ds  +   \phi (  1-t) \int_0^{1-t} s^{-\al}   ds\\
&        \approx \max( 1,   ( 1-t)^{1 -\al} \phi(1-t)).
\end{split}
\end{align}
Hence, we obtain  \eqref{phi-50}.
It remains to show \eqref{tildephi-25}.
We note first via  $\eqref{eq1113-1}_{1,2}$, \eqref{phi-assume-v11} and \eqref{phi-assume-v12} that
\begin{align}
\label{eq1017-10}
\begin{split}
{\tilde\Psi}_{\phi,0}(t)
%&=\int_0^t (t-s)^{-A} e^{-\frac{x_n^2}{t-s}}  \phi ( 1-s)ds \\
&\approx \int_0^{2t -1}  (1-s)^{-\alpha} e^{-\de_1 \frac{x_n^2}{1-s}}  \phi ( 1-s)ds  + \int_{2t-1}^t (t-s)^{-\alpha} e^{-\frac{x_n^2}{4(t-s)}}  \phi ( 1-t)ds\\
&:=I_1+I_2.
\end{split}
  \end{align}
In case that $ x_n^2  <   1-t$, since $ e^{-\de_1\frac{x_n^2}s} \approx 1$ for $ 1-t  <  s <1 $, it follows due to \eqref{phi-assume-v15} that
\begin{align}\label{eq1016-11-1}
\begin{split}
I_1  =  \int_{2-2t}^1 s^{-\alpha}   e^{-\de_1\frac{x_n^2}{s}} \phi(s) ds
%\int_0^{2t -1}  (1-s)^{-\alpha} e^{-\frac{x_n^2}{1-s}}  \phi ( 1-s)ds
&\approx   \int_{2-2t}^1 s^{-\alpha}  \phi(s) ds
    \approx  \max(1,
(1 -t)^{-\alpha +1} \phi(1-t)).
   \end{split}
\end{align}
For $I_2$, using the change of variables, we have
\begin{align}\label{eq1016-9-1-1-1}
\begin{split}
%\int_{2t-1}^t (t-s)^{-\alpha} e^{-\frac{x_n^2}{t-s}}  \phi ( 1-t)ds
I_2  &=  \phi ( 1-t)  \int_0^{1-t}s^{-\alpha} e^{-\frac{x_n^2}{4s}} ds
  =  \phi  (1 -t) x_n^{2 -2\alpha}\int^\infty_{\frac{x_n^2}{1-t}}  s^{-2 + \alpha}  e^{-\frac{s}4}  ds \\
& \approx  \left\{\begin{array}{cl} \vspace{2mm}
\phi( 1-t)  x_n^{2 -2\alpha}    \quad &\mbox{ if }\,\,\al > 1,\\
 \phi( 1-t)  ( 1-t)^{1 -\al}   \quad  &\mbox{ if }\,\,\al < 1.
 \end{array}
 \right.
  \end{split}
  \end{align}
Summing up \eqref{eq1017-10}, \eqref{eq1016-11-1} and    \eqref{eq1016-9-1-1-1}, we obtain
\begin{align*}
{\tilde\Psi}_{\phi,0}(t)  & \approx   \left\{\begin{array}{cl} \vspace{2mm}
 \phi( 1-t)  x_n^{2 -2\alpha}  +  \max(1, (1 -t)^{-\alpha +1} \phi(1-t))    \quad &\mbox{ if }\,\,\al > 1,\\
 \max(1, (1 -t)^{-\alpha +1} \phi(1-t))    \quad  &\mbox{ if }\,\,\al < 1
 \end{array}
 \right.\\
 & \approx   \left\{\begin{array}{cl} \vspace{2mm}
 \phi( 1-t)  x_n^{2 -2\alpha}    \quad &\mbox{ if }\,\,\al > 1,\\
 \max(1, (1 -t)^{-\alpha +1} \phi(1-t))    \quad  &\mbox{ if }\,\,\al < 1.
 \end{array}
 \right.
\end{align*}
Thus, we have \eqref{tildephi-25} for the case that $x_n^2 < 1-t$.

On the other hand, in case that $ 1-t < x_n^2$, we split $I_1$ as follows:
\begin{align}\label{eq1016-12-1}
I_1
%\int_0^{2t -1}  (1-s)^{-\alpha} e^{-\frac{x_n^2}{1-s}}  \phi ( 1-s)ds
  &  =   \int_{2-2t}^{x_n^2} s^{-\alpha} e^{-\de_1 \frac{x_n^2}{s}} \phi(s) ds +  \int_{x_n^2}^1 s^{-\alpha} e^{-\de_1 \frac{x_n^2}{s}} \phi(s) ds:=I_{11}+I_{12}.
\end{align}
It is straightforwad from \eqref{phi-assume-v15} that
\begin{align}\label{eq1018-1-1}
\begin{split}
I_{12}
%\int_{x_n^2}^1 s^{-\alpha} e^{-\frac{x_n^2}{s}} \phi(s) ds
\approx  \int^1_{x_n^2} s^{-\alpha}  \phi(s) ds
 \approx
\max(1,  x_n^{-2\alpha +2} \phi(x_n^2)).
   \end{split}
   \end{align}
 With the same estimate of \eqref{eq1016-7}, we get
   \begin{align}
\label{eq1018-2-1}
\begin{split}
I_{11} = \int_{2-2t}^{x_n^2} s^{-\alpha} e^{- \de_1\frac{x_n^2}{s}} \phi(s) ds
 &  \leq  c   x_n^{-2\alpha  +2} \phi(x_n^2).
 \end{split}
 \end{align}
Adding up  \eqref{eq1016-12-1}, \eqref{eq1018-1-1} and \eqref{eq1018-2-1}, we obtain   for   $ 1-t < x_n^2$
\begin{align}\label{eq0117-70}
I_1 = \int_0^{2t -1}  (1-s)^{-\alpha} e^{-\de_1 \frac{x_n^2}{1-s}}  \phi ( 1-s)ds
\approx \max(1,  x_n^{-2\alpha +2} \phi(x_n^2)).
\end{align}
%Let $\de =\frac12$ if $'' \geq  ``$ and $ \de = \frac18$ if $'' \leq `` $.{\color{red}??? }
Direct computations via the change of variables give
\begin{align}\label{eq1016-9}
\begin{split}
%\int_{2t-1}^t (t-s)^{-\alpha} e^{-\frac{x_n^2}{t-s}}  \phi ( 1-t)ds
I_2  &  =  \phi  (1 -t) x_n^{2 -2\alpha}\int^\infty_{\frac{x_n^2}{1-t}}  s^{-2 + \alpha}  e^{-\frac{s}4}  ds \approx \phi( 1-t)   (1-t)^{2-\al} x_n^{-2}
 e^{-\de  \frac{x_n^2}{1-t}}.
  \end{split}
  \end{align}
Hence, from \eqref{eq1017-10},  \eqref{eq1016-9},  \eqref{eq0117-70} and Lemma \ref{lemma240109}, for $ 1 -t \leq  x_n^2 $,
\begin{align*}
{\tilde\Psi}_{\phi,0}(t)  & \approx  \max(1,  x_n^{-2\alpha +2} \phi(x_n^2)) +   \phi( 1-t)   (1-t)^{2-\al} x_n^{-2}
 e^{- \de \frac{x_n^2}{1-t}}.
%  & \approx  \max(1,  x_n^{-2\alpha +2} \phi(x_n^2)).
 \end{align*}
We obtain \eqref{tildephi-25}.
\\
\\
Next, we prove the estimate \eqref{tildephi-10-1} of  ${\tilde{\Psi}}_{\phi, 1}(t)$.
Again we separately consdier the cases that $t<1$ and $t>1$.
\\
$\bullet$ {(The case $ 1 <  t  <\frac54$)}\,\,
For convenience, we define 
\begin{align}
%\label{notation-10}
%\Psi_{\phi, k}(t):&=
%\int_0^{\min(1,t)}( t-s)^{-\al}  \phi^{(k)}(1-s) ds,
%\\
\label{notation-30}
{\tilde{\Psi}}_{\phi, k}(t;\tau):=
&\int_0^{\tau}  (t-s)^{-\alpha} e^{- \frac{x_n^2}{4(t-s)}}   \phi^{(k)}(1-s) ds. \qquad k=0,1.
\end{align}
 We then split the integral, ${\tilde\Psi}_{\phi,1}(t;2-t)$, as follows:
\begin{align}  \label{240105-3}
\begin{split}
{\tilde\Psi}_{\phi,1}(t;2-t)    %= \int_0^{2-t}(t-s)^{-\alpha} e^{-\frac{x_n^2}{t-s}}  \phi'( 1-s) ds \\
%:=&\int_0^1  (t-s)^{-\alpha} e^{-\frac{x_n^2}{t-s}}  \phi( 1-s)ds \\
& =  \int_0^{\frac34}(t-s)^{-\alpha} e^{-\frac{x_n^2}{4(t-s)}}  \phi'( 1-s) ds    +  \int_\frac34^{2-t}(t-s)^{-\alpha} e^{-\frac{x_n^2}{4(t-s)}}  \phi'( 1-s) ds.
\end{split}
\end{align}
Since $\phi$ is smooth in $[\frac12,1]$, from direct caculation, we have for $0 < x_n < \frac12$
\begin{align}\label{240105-1}
 |\int_0^{\frac34}(t-s)^{-\alpha} e^{-\frac{x_n^2}{4(t-s)}}  \phi'( 1-s) ds  | \leq A_0< \infty. %\quad 1 < t < \frac54, \quad 0 < x_n < \frac12.
\end{align}
For the second term in \eqref{240105-3}, using
\eqref{phi-assume-v13} and \eqref{phi-assume-v14}, we have
\begin{align}  \label{240105-2}
\int_\frac14^{2-t}(t-s)^{-\alpha} e^{-\frac{x_n^2}{4(t-s)}}  \phi'( 1-s)     %= \int_0^{2-t}(t-s)^{-\alpha} e^{-\frac{x_n^2}{t-s}}  \phi'( 1-s) ds \\
%:=&\int_0^1  (t-s)^{-\alpha} e^{-\frac{x_n^2}{t-s}}  \phi( 1-s)ds \\
&\approx \int_\frac14^{2-t}(1-s)^{-\alpha} e^{- \de_1\frac{x_n^2}{1-s}}  \phi'( 1-s)     =  \int_{t-1}^\frac14 s^{-\alpha} e^{-\de_1 \frac{x_n^2}s}  \phi'( s).
\end{align}
In the case that $ x_n^2 \leq t-1$, since $e^{- \de_1 \frac{x_n^2}s} \approx 1$ for $ x_n^2 \leq  t -1  \leq s  <1$, we can see via \eqref{phi-assume-v16} that
\begin{align} \label{eq1016-1-1}
\begin{split}
 \int_{t-1}^\frac14 s^{-\alpha} e^{- \de_1 \frac{x_n^2}s}  \phi'( s) ds \approx  \int_{t-1}^\frac14  s^{-\alpha}    \phi'( s) ds
  \approx      ( t-1)^{-\al +1} \phi'(t-1).
\end{split}
\end{align}
Adding up  \eqref{240105-3}, \eqref{240105-1}, \eqref{240105-2} and \eqref{eq1016-1-1},  it follows that
\begin{align}\label{tildephi-zero-20-1}
\begin{split}
{\tilde\Psi}_{\phi,1}(t;2-t)    \approx      ( t-1)^{-\al +1} \phi'(t-1),
 \end{split}
 \end{align}
and thus we obtain \eqref{tildephi-10-1}  for $x_n^2 < t-1$.

Secondly, if  $ t-1 < x_n^2$,  we split the integral \eqref{eq1016-1-1} as follows:
\begin{align}\label{eq1016-5-1}
\begin{split}
 %\int_0^{2-t}   (t-s)^{-A} e^{-\frac{x_n^2}{t-s}}  \phi( 1-s)ds
  \int_{t-1}^\frac14  s^{ -\alpha}  e^{-\de_1 \frac{x_n^2}s} \phi'( s)  ds
 & =   \int_{t-1}^{x_n^2} s^{-\alpha}e^{- \de_1 \frac{x_n^2}s}  \phi'(s) ds +  \int_{x_n^2}^\frac14 s^{-\alpha}e^{-\de_1 \frac{x_n^2}s} \phi'(s) ds.
  \end{split}
\end{align}
Since $ e^{-\frac{x_n^2}s} \approx 1$ for $ x_n^2 < s<1 $,   it follows from    \eqref{phi-assume-v16} that
\begin{align}\label{eq1016-6}
\begin{split}
\int^\frac14_{x_n^2} s^{-\alpha}  e^{-\de_1\frac{x_n^2}s}\phi'(s) ds \approx  \int^\frac14_{x_n^2} s^{-\alpha}  \phi'(s) ds
\approx   x_n^{-2\alpha +2} \phi'(x_n^2).
   \end{split}
   \end{align}
%Let ${\rm sgn} (\phi')  = 1$ if $ \phi' > 0$, and ${\rm sgn} (\phi') = -1 $ if $ \phi' < 0 $
Recalling that  ${\rm sgn} (\phi')  \phi' (s)> 0$ for $ 0 < s <\frac12$ and  $e^{-b} \leq c_m b^{-m}$ for any $ b > 0$ and $m \in {\mathbb N}$, we have
\begin{align}
\label{eq1016-7-1-1}
\begin{split}
  {\rm sgn} (\phi')  \int_{t-1}^{x_n^2} s^{-\alpha} e^{- \de_1\frac{x_n^2}s} \phi'(s) ds
&  \leq  c \, {\rm sgn} (\phi')  \int_{t-1}^{x_n^2} s^{-\alpha}  (\frac{x_n^2}{\de_1s})^{-m} \phi'(s) ds\\
&  \leq  c \, {\rm sgn} (\phi')  x_n^{-2m} \int_{t-1}^{x_n^2} s^{-\alpha + m } \phi'(s) ds\\
 &  \leq  c x_n^{-2m}  x_n^{-2\alpha + 2m +2}  {\rm sgn} (\phi')  \phi'(x_n^2)\\
&   \leq  c   x_n^{-2\alpha   +2} {\rm sgn} (\phi') \phi'(x_n^2).
 \end{split}
 \end{align}
Therefore,
we deduce from estimates \eqref{eq1016-5-1} -\eqref{eq1016-7-1-1} that
\begin{align}\label{eq1016-8-1}
 \int_{t-1}^\frac12 s^{ -\alpha } e^{-\de_1 \frac{x_n^2}s}  \phi'( s)  ds
 & \approx   x_n^{-2\alpha +2} \phi'(x_n^2).
\end{align}
Adding up estimates \eqref{240105-3}, \eqref{240105-1} , \eqref{240105-2} and \eqref{eq1016-8-1}, it follows in case $ t -1 < x_n^2$ that
\begin{align}\label{eq0103-10}
{\tilde\Psi}_{\phi,1}(t;2-t) \approx    x_n^{-2\alpha   +2} \phi'(x_n^2),
%\quad t-1 < x_n^2,
\end{align}
which implies \eqref{tildephi-10-1} for $ t -1 < x_n^2$.
\\
\\
$\bullet$ (The case $ \frac34 < t < 1$)\,\,
Noting that  $ \frac12 < 2t -1 < t$. Hence, we have
\begin{align}\label{240105-10}
{\tilde\Psi}_{\phi,1}(t)
&=  \int_0^\frac12   (t-s)^{-\alpha} e^{-\frac{x_n^2}{4(t-s)}}  \phi' ( 1-s)ds  + \int_\frac12 ^t (t-s)^{-\alpha} e^{-\frac{x_n^2}{4(t-s)}}  \phi' ( 1-s)ds.
  \end{align}
Since $\phi$ is smooth in $[\frac12,1]$, it is direct for $0 < x_n < \frac12$ that
\begin{align}\label{240105-11}
|  \int_0^\frac12   (t-s)^{-\alpha} e^{-\frac{x_n^2}{4(t-s)}}  \phi' ( 1-s)ds | \leq B_0 < \infty.
\end{align}
Using $\eqref{eq1113-1}_{1,2}$, it follows that %and  \eqref{phi-assume-v13} and \eqref{phi-assume-v14}, it follows that
\begin{align}
\begin{split}
\label{eq1017-10-1}
 &\int_\frac12 ^t (t-s)^{-\alpha} e^{-\frac{x_n^2}{4(t-s)}}  \phi' ( 1-s)ds\\
%&=\int_0^t (t-s)^{-A} e^{-\frac{x_n^2}{t-s}}  \phi ( 1-s)ds \\
\approx &\int_\frac12^{2t -1}  (1-s)^{-\alpha} e^{-\de_1 \frac{x_n^2}{1-s}}  \phi' ( 1-s)ds  + \int_{2t-1}^t (t-s)^{-\alpha} e^{-\de_1\frac{x_n^2}{t-s}}  \phi' ( 1-t)ds\\
:=&I_1+I_2.
\end{split}
  \end{align}
In case that $ x_n^2 \le  1-t$,  direct computations via the change of variables give
\begin{align}\label{eq1016-9-1}
\begin{split}
%\int_{2t-1}^t (t-s)^{-\alpha} e^{-\frac{x_n^2}{t-s}}  \phi ( 1-t)ds
I_2  &  =  \phi'  (1 -t) x_n^{2 -2\alpha}\int^\infty_{\frac{x_n^2}{1-t}}  s^{-2 + \alpha}  e^{- \de_1 s}  ds  \approx  \left\{\begin{array}{cl} \vspace{2mm}
\phi'( 1-t)  x_n^{2 -2\alpha}    \quad &\mbox{if}\,\,\,\al > 1,\\
 \phi'( 1-t)  ( 1-t)^{1 -\al}   \quad  &\mbox{if}\,\,\,\al < 1.
 \end{array}
 \right.
  \end{split}
  \end{align}
 Since $ e^{-\de_1 \frac{x_n^2}s} \approx 1$ for $ 1-t \leq s$, it follows from \eqref{phi-assume-v15} that
\begin{align}\label{eq1016-11}
\begin{split}
I_1
%\int_0^{2t -1}  (1-s)^{-\alpha} e^{-\frac{x_n^2}{1-s}}  \phi ( 1-s)ds
&\approx   \int_{2-2t}^1 s^{-\alpha}  \phi'(s) ds
    \approx
(1 -t)^{-\alpha +1} \phi'(1-t).
   \end{split}
\end{align}
From \eqref{240105-10} \eqref{240105-11},  \eqref{eq1017-10-1}, \eqref{eq1016-9-1} and    \eqref{eq1016-11}, we have
\begin{align*}
{\tilde\Psi}_{\phi,1}(t)
 & \approx   \left\{\begin{array}{cl} \vspace{2mm}
 \phi( 1-t)  x_n^{2 -2\alpha}    \quad &\mbox{if}\,\,\,\al > 1,\\
 (1 -t)^{-\alpha +1} \phi(1-t)    \quad  &\mbox{if}\,\,\,\al < 1.
 \end{array}
 \right.
\end{align*}
We obtain \eqref{tildephi-25-2} for $x_n^2 < 1-t$.

On the other hand, in case that $ 1-t < x_n^2$, we split $I_1$ as follows:
\begin{align}\label{eq1016-12}
I_1
%\int_0^{2t -1}  (1-s)^{-\alpha} e^{-\frac{x_n^2}{1-s}}  \phi ( 1-s)ds
  &  =   \int_{2-2t}^{x_n^2} s^{-\alpha} e^{-\frac{x_n^2}{4s}} \phi'(s) ds +  \int_{x_n^2}^\frac12 s^{-\alpha} e^{-\frac{x_n^2}{4s}} \phi'(s) ds:=I_{11}+I_{12}.
\end{align}
It is straightforwad from \eqref{eq1016-6} that
\begin{align}\label{eq1018-1}
\begin{split}
I_{12}
%\int_{x_n^2}^1 s^{-\alpha} e^{-\frac{x_n^2}{s}} \phi(s) ds
\approx  \int^\frac12_{x_n^2} s^{-\alpha}  \phi'(s) ds
 \approx
x_n^{-2\alpha +2} \phi'(x_n^2).
   \end{split}
   \end{align}
 With the same estimate of \eqref{eq1016-7-1-1}, we have
   \begin{align}
\label{eq1018-2}
\begin{split}
{\rm sgn} (\phi')  \int_{2-2t}^{x_n^2} s^{-\alpha} e^{-\frac{x_n^2}{4s}} \phi'(s) ds
 &  \leq  c  \, {\rm sgn} (\phi')  x_n^{-2\alpha  +2} \phi'(x_n^2).
 \end{split}
 \end{align}
Adding up \eqref{eq1016-12} and \eqref{eq1018-2}, we obtain   for   $ 1-t < x_n^2$
\begin{align}\label{eq0117-70-1}
I_1 \approx \int_\frac12^{2t -1}  (1-s)^{-\alpha} e^{-\de_1 \frac{x_n^2}{1-s}}  \phi' ( 1-s)ds
\approx   x_n^{-2\alpha +2} \phi'(x_n^2).
\end{align}
Direct computations via the change of variables give
\begin{align}\label{eq1016-9-1-1}
\begin{split}
%\int_{2t-1}^t (t-s)^{-\alpha} e^{-\frac{x_n^2}{t-s}}  \phi ( 1-t)ds
I_2  &  \approx  \phi'  (1 -t) x_n^{2 -2\alpha}\int^\infty_{\frac{x_n^2}{1-t}}  s^{-2 + \alpha}  e^{-\de_1 s}  ds \approx \phi'( 1-t) (1-t)^{2-\al} x_n^{-2}
 e^{-\de \frac{x_n^2}{1-t}}.
  \end{split}
  \end{align}
Hence, from  \eqref{240105-10}, \eqref{240105-11}, \eqref{eq1017-10-1},  \eqref{eq0117-70-1}, \eqref{eq1016-9-1-1} and Lemma \ref{lemma240109}, we obtain \eqref{tildephi-25-2} for $ 1 -t \leq  x_n^2 $.
Following similar way of proving \eqref{phi-5} and \eqref{phi-50}, one can also obtain that 
\begin{equation}\label{March02-100}
\Psi_{\phi,1}(t)%=\int_0^{\min(1,t)} ( t-s)^{-\al}  \phi'(1-s) ds
\approx \abs{t-1}^{ 1-\al} \phi'(\abs{t-1}),\qquad \frac34<t<\frac54.
\end{equation}
which implies \eqref{March02-100-10} and \eqref{March02-100-20}.
Since its proof is tedious repetition, the details are omitted.
%We complete the proof of Lemma \ref{key-lemma}.
\end{proof}

Next, we provide the proofs of Lemma \ref{lemma240109} and Lemma \ref{lemma0709-1}.
\\
\begin{pflem22}
Let $k \in {\mathbb N} \cup \{0\}$ be an integer with $ 2^k \leq \frac{x_n^2}{t} < 2^{k+1}$.
Due to the Assumption \ref{Assume-phi}, it follows that
\begin{equation}\label{pf-Lemma22-10}
c^{-k-1}\phi(t)\le\phi(2^{k+1}t)\le \phi(x_n^2).
\end{equation}
We choose a positve integer $m$ such that $c\le 2^m$, where $c$ is the constant in \eqref{pf-Lemma22-10}.
Noting that $e^{-x}\le c_m x^{-m}$, we obtain
\[
e^{-\de_1 \frac{x_n^2}{t}}\phi(t) \le e^{-\de_1 2^k}c^{k+1}\phi(x_n^2) \le c_m (\delta_1 2^k)^{-m}c^{k+1}\phi(x_n^2) \le c_m c\delta_1^{-m}(\frac{c}{2^{m}})^k\phi(x_n^2)
\le c_m c\delta_1^{-m}\phi(x_n^2).
\]
Since $cc_m$ is an absolute constant, the first inequality in \eqref{0110-1} is proved.
For the second inquality, it follows from the following observation:
\[
c^{-k-1}\,{\rm sgn}(\phi'(t))\,\phi'(t)\le\,{\rm sgn}(\phi'(2^{k+1}t))\phi'(2^{k+1}t)\le {\rm sgn}(\phi'(x_n^2))\phi(x_n^2).
\]
Similar computations as above deduce the second inequality, and thus we skip its details.
In the case that $\phi = t^{-a} $, it is starightforward that $ t^{-1} \phi(t) \approx {\rm sgn} (\phi')  \phi'(t)$ for $ 0 < t< \frac12$. Again, the similar arguments as \eqref{0110-1} implies \eqref{240110-5}, and the details are omitted.
\end{pflem22}
%\vspace{2mm}
\\
\begin{pflem24}
The second equality of \eqref{0515-1} was shown in  Lemma 3.2 in \cite{CK23}.
Thus it suffices to prove the first equality of \eqref{0515-1}.  Let $x'\in\Rn$ with $\abs{x'}\ge 1$. We divide $\Rn$ by three disjoint sets
$D_1, D_2$ and $D_3$, which are defined by
\[
D_1=\bket{z'\in\Rn: |x'-z'| \leq \frac1{10} |x'|},
\]
\[
D_2=\bket{z'\in\Rn: |z'| \leq \frac1{10} |x'|}, \qquad D_3=\Rn\setminus
(D_1\cup D_2).
\]
We then split the following integral into three terms as follows:
\begin{equation}\label{0730-2}
\int_{\Rn}  \Ga'(x'-z',t)   D_{x_n}  N( z', x_n) dz' =
\int_{D_1}\cdots + \int_{D_2} \cdots+ \int_{D_3}\cdots := K_1 +  I
+K_3.
\end{equation}
We first estimtate $I$.
\begin{align}\label{estJ2}
\begin{split}
  I & = \int_{|z'| \leq \frac1{10} |x'|} D_{x_n}  N( z',x_n)    \Ga'(x'-z',t)  \big)    dz' \\
&\  \approx    ct^{-\frac{n -1 }2  } e^{-\delta\frac{|x'|^2}{t}}     \int_{|z'| \leq \frac1{10} |x'|}  \frac{x_n  }{  (|z'|^2 + x_n^2)^{\frac{n}2} }  dz'\\
 &  \approx    c t^{-\frac{n-1 }2  } e^{-\delta\frac{|x'|^2}{t}}     \int_{|z'| \leq \frac{|x'|}{10x_n}}  \frac{1}{  (|z'|^2 + 1)^{\frac{n}2} }  dz'\\
& \approx   c      t^{-\frac{n-1 }2  }   e^{-\delta\frac{|x'|^2}{t}}.
\end{split}
\end{align}

On the other hand, the term $K_3$ is controlled as follows:
\begin{align}\label{est-J3}
\begin{split}
   |K_3| &   \leq   c\frac{x_n }{(|x'|^2 + x_n^2)^{\frac{n}2}  }  t^{-\frac{n-1}2  }  \int_{\{|z'| \geq \frac1{10} |x'|\}}  e^{-\frac{|z'|^2}{t}} dz'\\
  &   \leq  c \frac{x_n }{(|x'|^2 + x_n^2)^{\frac{n}2}  }   \int_{\{|z'| \geq \frac{ |x'|}{10 \sqrt{t}}\}}  e^{-|z'|^2} dz'\\
 & \leq c \frac{x_n }{(|x'|^2 + x_n^2)^{\frac{n}2}  }  e^{-\frac{|x'|^2}t}\le cx_n t^{\frac12},
 \end{split}
\end{align}
where we used that $e^{-\frac{|x'|^2}{2t}}\lesssim \frac{t^{\frac{m}2}}{\abs{x'}^m}$ for $ m \geq 0$ (in fact, we take $m=\frac{1}{2}$).

Meanwhile, we decompose $K_{1}$ in the following way.
 \begin{align*}
\notag  K_{1} & =     \int_{ D_1}    \Ga'(x' -z',t)  D_{x_n }  N(z',x_n) dz'
\\
\notag &
 =    \int_{\{|z'| \leq    \frac1{10}\frac{ |x'|}{\sqrt{t}}\}}     \Ga'(z',1) \Big( D_{x_n}  N(x' -\sqrt{t}z',x_n) - D_{x_n}  N(x',x_n) \Big)dz'\\
\notag &\quad + D_{x_n}   N(x',x_n) \int_{\Rn}  \Ga'(z',1)  dz' - D_{x_n}  N(x',x_n) \int_{\{|z'| \geq  \frac1{10}\frac{ |x'|}{\sqrt{t}}\}}  \Ga'(z',1)  dz'\\
& = K_{11} +D_{x_n}   N(x',x_n) + K_{12},
\end{align*}
where we used $\int_{\Rn}  \Ga'(z',1)  dz'=1$.
Since $\frac{|x'|^2}{t}\geq 1$, we observe that
\begin{equation}\label{0730-1}
\begin{split}
|K_{11} (x',t)| \leq & c\frac{x_n }{(|x'|^2 + x_n^2)^{\frac{n+1}2}  } t^\frac12   \int_{\{|z'| \leq  \frac1{10}\frac{ |x'|}{\sqrt{t}}\}}    e^{-|z'|^2}|z'|   dz'\\
\le & c\frac{x_n }{(|x'|^2 + x_n^2)^{\frac{n+1}2}  }t^{\frac{1}{2}  }\le cx_nt^{\frac{1}{2}}
\end{split}
\end{equation}
and
\begin{equation}\label{est-J122}
\begin{split}
|K_{12} (x',t)| \leq & c\frac{x_n }{(|x'|^2 + x_n^2)^{\frac{n}2}  }   \int_{\{|z'| \geq  \frac1{10}\frac{ |x'|}{\sqrt{t}}\}}    e^{-|z'|^2}   dz'\\
\le &c \frac{x_n }{(|x'|^2 + x_n^2)^{\frac{n}2}  } e^{-\frac{|x'|^2}{2t}}\le cx_nt^\frac12.
\end{split}
\end{equation}
Adding up \eqref{estJ2} -\eqref{est-J122} and putting $J_1:=K_3+K_{11}+K_{12}$, we deduce   \eqref{0515-1}-$\eqref{Jkl}$.
\end{pflem24}

%%%%%%%%%%%%%%%%%%%%%%%%%%%%%%%%%%

\section{Proof of Theorem \ref{theo1114-1} }
\label{pf-SS}
\setcounter{equation}{0}

\subsection{Estimate of $w$}
We remind the decomposition of  $w$, which given as
$w = w^L + w^N + w^B$ in \eqref{1220-8}.  Let $ x' \in B'_1\subset\mathbb{R}^{n-1}$.
Noting that  $|x' -y'| \geq \frac12 $ for   $y' \in A$ and recalling boundary conditions in \eqref{0502-6}-\eqref{boundarydata} with Assumption \ref{Assume-phi}, it follows for $\frac34 < t <\frac54$ that
\begin{align}\label{0930-1}
\begin{split}
w^L_i(x,t) &  = \int_0^t \int_{A} L_{ni}(x'-y', x_n, t-s) g_n(y',s) dy' ds\\
&  \leq  c \int_0^t    (t -s)^{-\frac12} \phi ( 1-s) ds\\
&\approx  \max(1, \abs{t-1}^{\frac12} \phi (|t-1|) ), %\qquad \frac34 < t <\frac54,
\end{split}
\end{align}
where we used \eqref{est-L-tensor}.

On the other hand, we note that $w^N_i$ vanishes for $t>1$. Furthermore, it follows from  \eqref{1220-8} that
\begin{align}\label{eq1017-25-1}
w^N_i (x,t) = 2\int_{A} D_{x_i} N (x'- y', x_n) g_n(y', t) dy' = 2 T_i  g^{\mathcal S}_n (x') \phi(1-t),
\end{align}
where
\[
T_i  g^{\mathcal S}_n (x')  = \int_{A}D_{x_i} N(x' -y', x_n)  g^{\mathcal S}_n (y')dy'.
\]
Since $T_i  g^{\mathcal S}_n (x')$ is positive and finite, we have $T_i  g^{\mathcal S}_n (x')  \approx 1$ for $|x'|  <  \frac12$ and $ x_n \in(0,  \frac12)$, which implies via \eqref{eq1017-25-1} that
\begin{align}\label{eq1017-25-1-1}
w^N_i (x,t) \approx \phi(1-t){\bf I}_{\bket{t<1}}.
\end{align}

It remains to estimate $w_i^{\mathcal B}$.
Since $w_i^{\mathcal B}=0$ for $i=n$,
we note that $|x' -y'| \geq 1$ for $|x'| <\frac12 $ and $y' \in A$. Using  \eqref{eq1214-1} and \eqref{eq1214-1-1},  for $i\neq n$, we have for $\frac34< t< \frac54$
\begin{align}\label{eq1017-22}
\begin{split}
 w_i^{\mathcal B}(x,t)
   & =   \int_0^t   D_{x_n} \Ga_1(x_n, t-s)  g_n^{\mathcal T} (s) \int_{A}  \int_{\Rn}    \Ga'(x'-y'-z',t)    D_{z_i }  N( z',0) dz'  g^{\mathcal S}_n(y') dy'ds\\
    & \approx   \int_0^t   D_{x_n} \Ga_1(x_n, t-s)  g_n^{\mathcal T} (s)  ds.
      %\quad  \frac34< t< \frac54.
  \end{split}
\end{align}
We separately treat \eqref{eq1017-22} for the cases that $t<1$ and for $t>1$.
\\
\\
$\bullet$ {\bf (The case  $ 1 -t_0 < t <1$)}\,\,
Using  \eqref{eq1017-22}  and \eqref{tildephi-25},  we observe that
\begin{align}\label{eq1017-22-2}
\begin{split}
  w_i^{\mathcal B}(x,t)
 %  & =   \int_0^t   D_{x_n} \Ga_1(x_n, t-s)  g_n^{\mathcal T} (s) \int_{A}  \int_{\Rn}    \Ga'(x'-y'-z',t)    D_{z_i }  N( z',0) dz'  g^{\mathcal S}_n(y') dy'ds\\
   & \approx  -\int_0^t  \frac{x_n}{(t -s)^\frac32} e^{-\frac{x_n^2}{4(t-s)}}    \phi(1 -s)  ds\\
   & \approx  -\left\{\begin{array}{ll} \vspace{2mm}
    \phi (1 -t)  \quad &\mbox{if}\,\,x_n  < ( 1-t)^\frac12,\\
     \phi(x_n^2)+   \phi( 1-t) (1-t)^\frac12 x_n^{-1}   e^{-\de \frac{x_n^2}{1-t}}
     \quad  &\mbox{if}\,\,(1 -t)^\frac12 < x_n.
    \end{array}
    \right.
  \end{split}
\end{align}
Therefore, we have % for $(x,t)\in \calD_+\cup \calD_-$ {\color{red}???} and $t<1$
\begin{align}\label{eq1017-22-3}
\begin{split}
  \abs{w_i^{\mathcal B}(x,t) }
  \lesssim \left\{\begin{array}{ll} \vspace{2mm}
     1 \quad  &\mbox{if}\,\,\phi(t)=\abs{\ln t}^{-1}\\
    \phi (1 -t)%{\bf I}_{\calD_+}+ \phi(x_n^2){\bf I}_{\calD_-}
    \quad &\mbox{otherwise}.
    \end{array}
    \right.
  \end{split}
\end{align}
Adding up \eqref{0930-1}, \eqref{eq1017-25-1-1} and \eqref{eq1017-22-3}, we obtain
%for $(x,t)\in \calD_+\cup \calD_1$  and $ t < 1$ that
\begin{align}\label{eq1017-22-4}
\begin{split}
  \abs{w_i(x,t) }
  \lesssim \left\{\begin{array}{ll} \vspace{2mm}
     1 \quad  &\mbox{if}\,\,\phi(t)=\abs{\ln t}^{-1}\\
    \phi (1 -t)%{\bf I}_{\calD_+}+ \phi(x_n^2){\bf I}_{\calD_-}
    \quad &\mbox{otherwise}.
    \end{array}
    \right.
  \end{split}
\end{align}
We complete the proof of \eqref{eq1218-10} for $ t < 1$.

%Let $\epsilon_0$ and $\epsilon_1$ be positive constants with $0<\epsilon_0\ll 1\ll \epsilon_1<\infty$, which will be specified later.
%We introduce two different regions defined by
%\begin{align}\label{regions-10}
%\tilde{\calD}_+&=\calD_+\cap \bket{(x,t):  \frac{\ep_0}{2} ( 1-t)^\frac12<x_n <\ep_0 ( 1-t)^\frac12 }, \\
%\tilde{\calD}_-&=\calD_-\cap \bket{(x,t):  \frac{\ep_1}{2} ( 1-t)^\frac12<x_n <\ep_1 ( 1-t)^\frac12 }.
%\end{align}
%We note also that there exists $t_1>0$ with $t_1<t_0$ such that
%$\abs{t-1}\phi( \abs{t-1})\le \frac{1}{2}\phi( \abs{t-1})$ for any $t<t_1$.

Let $\phi\neq \abs{\ln t}^{-1}$ and we choose a sufficiently large $\ep_1 \gg 1$. We then show that there exists a sufficiently small $\epsilon$ depending on $\epsilon_1$  such that $w^B(x,t) >- \ep \phi(1-t)$  in $\tilde{\calD}_-=\bket{(x,t)\in \calD_-:  \ep_1 \sqrt{1-t} < x_n<\frac12 }$.
Indeed, recalling the second estimate in \eqref{eq1017-22-2}, it follows that
\begin{equation}\label{Feb28-10}
w_i^{\mathcal B}(x,t) \gtrapprox -\phi(\epsilon_1^2 (1-t))-   \phi( 1-t)\epsilon_1^{-1}   e^{-\de \epsilon_1^2} \ge -\epsilon \phi( 1-t).
\end{equation}
In a similar way, we can see via \eqref{0930-1} that
\begin{equation}\label{Feb28-20}
|w^L(x,t)| < \ep \phi(1-t), \qquad \mbox{ for }\,\,(x,t)\in \tilde{\calD}_-.
\end{equation}
Summing up estimates \eqref{eq1017-25-1-1}, \eqref{Feb28-10} and \eqref{Feb28-20}, we conclude in  $\tilde{\calD}_-$ that
\begin{align}\label{1215-10-20}
w_i(x,t)\ge w^N_i (x,t) + w^B_i (x,t) -\abs{w^L_i (x,t) }\ge    ( c- 2 \ep ) \phi(1-t) \ge \frac{c}{2} \phi(1-t).
\end{align}
We complete the proof of \eqref{eq0828-5-1} for $ t<1$.
\\
\\
$\bullet$ {\bf (The case $ 1 < t < 1 + t_0 $)}\,\,
With the aid of \eqref{tildephi-15} and \eqref{eq1017-22}, we have
\begin{align}\label{eq1017-29}
\begin{split}
  w_i^{\mathcal B}(x,t)
%   & =   \int_0^1   D_{x_n} \Ga_1(x_n, t-s)  g_n^{\mathcal T} (s) \int_{A}  \int_{\Rn}    \Ga'(x'-y'-z',t)    D_{z_i }  N( z',0) dz'  g^{\mathcal S}_n(y') dy'ds\\
   & \approx  -\int_0^1 \frac{x_n}{(t -s)^\frac32} e^{-\frac{x_n^2}{4(t-s)}}    \phi(1 -s)  ds\\
   & \approx  -\left\{\begin{array}{ll} \vspace{2mm}
    x_n (t-1)^{-\frac12} \phi(t-1)  \quad  &\mbox{if}\,\,x_n^2 \leq t-1,\\
    \max(1, \phi(x_n^2) ) \quad  &\mbox{if}\,\, t-1 \leq x_n^2.
    \end{array}
    \right.
  \end{split}
\end{align}
Owing to \eqref{0930-1},  \eqref{eq1017-25-1-1} and \eqref{eq1017-29},   it follows that
\begin{align}\label{eq1017-22-20}
\begin{split}
  \abs{w_i(x,t) }
  \lesssim \left\{\begin{array}{ll} \vspace{2mm}
     1 \quad  &\mbox{if}\,\,\phi(t)=\abs{\ln t}^{-1}\\
    \phi (1 -t)%{\bf I}_{\calD_+}+ \phi(x_n^2){\bf I}_{\calD_-}
    \quad &\mbox{otherwise},
    \end{array}
    \right.
  \end{split}
\end{align}
which implies \eqref{eq1218-10} for $t \in ( 1 , \frac54)$ and thus together with \eqref{eq1017-22-4}, the proof of \eqref{eq1218-10} is completed.
Moreover, if $\phi\neq \abs{\ln t}^{-1}$, by choosing sufficiently small   $t_0$ such that for $ \ep_1\sqrt{t-1}< x_n $ and $ \sqrt{t-1} < t_0$, we have
\begin{align}\label{1215-10-10}
w^B_i (x,t) + w^L_i (x,t) \le   \big(- c+ c_1 \sqrt{t-1}\big)  \phi(t-1) \le -\frac{c}{2}\phi(t-1).%\qquad \mbox{ in }\,\,\tilde{\calD}_+.
\end{align}
On the other hand, from \eqref{0930-1} and  \eqref{eq1017-29}, for $ 1 < t < 1 + t_0$ and  $\frac12 \sqrt{t -1} < x_n < \sqrt{t -1}$, we have
\begin{align*}
w_i (x,t) \leq (-\frac{c}{2} + \ep) \phi(t-1) \leq -\frac{c}{4} \phi(t-1).
\end{align*}
This completes the proof of \eqref{eq0828-5-1-1}.
\qed

%%%%%%%%%%%%%%%%%%%%%%%%%%%%%%%%%%%%%%%%%%%%%%

%\section{Proof of Theorem \ref{theo1114-1} (Estimate of $\na w$)}
%\setcounter{equation}{0}

\subsection{Estimate of $\na w$}
We note for $ 1 \leq i \leq n-1$ that
\begin{align}\label{0706-1}
  D_{x_n} w^L_i (x,t)
& = - \sum_{k =1}^{n-1} D_{x_k}  w^L_{ik} (x,t)      + \frac12   D_{x_i}    f(x,t),
\end{align}
where %$f(x,t) = \int_0^t \int_{\Rn} D_{x_n} \Ga(x' -z', x_n, t-s) g_n(z',s) dz'ds$ and
\begin{align*}
f(x,t) &= \int_0^t \int_{\Rn} D_{x_n} \Ga(x' -z', x_n, t-s) g_n(z',s) dz'ds,\\
w^L_{ik} (x,t)  & = \int_0^t \int_{\Rn} L_{ik} (x' -y', x_n, t-s) g_n(y',s) dy' ds,\quad k=1, 2, \cdots, n-1.
\end{align*}
Similar estimates as in  \eqref{0930-1} imply for $\frac34 < t < \frac54$ that
\begin{align}\label{eq1129-1}
D_{x_n} w^L_i (x,t) &  \leq  c
\max(1,  |t -1|^{\frac12} \phi (|t-1|) ).
\end{align}
It is also straightforward as in \eqref{eq1017-25-1-1} that
 \begin{align}\label{eq1129-2}
 |D_x w^N_i (x,t)| \leq c \phi(\abs{t-1}){\bf I}_{\bket{t<1}}.
 \end{align}
We can easily see that tangential derivatives of $w^{\mathcal B}_i$ satisfy essentially the same estimates as those of $w^{\mathcal B}_i$, namely $| D_{x'} w_i^{\mathcal B}(x,t)|$ satisfies  \eqref{eq1017-22-3}.
%\begin{align}
%| D_{x'} w_i^{\mathcal B}(x,t)| & \leq c | w_1^{\mathcal B}(x,t)|.
%\end{align}
Therefore, it remains to estimate $D_{x_n } w_i^{\mathcal B}$. As before, we treat the cases that $t<1$ and $t>1$ seperately.
\\
\\
$\bullet$ {\bf (The case $ 1- t_0< t< 1$)}\,\,
We recall that
\begin{align*}
B_{in}(x', x_n, t) = D_{x_n} A_i (x', x_n, t),
\end{align*}
where
\begin{align}\label{240103-1}
A_i(x,t) = \int_{\Rn} \Ga (x'-z', x_n, t) D_{z_i} N(z',0) dz',\qquad i=1,2,\cdots, n-1.
\end{align}
Using  that $(D_t -\De) A_i =0$ for $ (x,t) \in \R_+ \times (0, \infty)$ and  $g_n^{\mathcal T} (y',0) =0$, we have
\begin{align}\label{1216-1}
\begin{split}
D_{x_n} w_i^{\mathcal B}(x,t) & =  \int_0^t \int_{\Rn}    D_{x_n} B_{in}(x' -y', x_n, t-s)  g_n^{\mathcal T} (y',s) dy' ds\\
& =  \int_0^t \int_{\Rn}  (   D_{t}  - \De'_x) A_{i}(x' -y', x_n, t-s)  g_n^{\mathcal T} (y',s) dy' ds\\
& =  \int_0^t \int_{\Rn}  A_{i}(x' -y', x_n, t-s)  (   D_s  - \De'_y) g_n^{\mathcal T} (y',s) dy' ds.
\end{split}
\end{align}
Due to the estimates \eqref{eq1214-1},  and \eqref{tildephi-25},
the second term is estimated as follows:
\begin{align}
\begin{split}
&\abs{\int_0^t \int_{\Rn}  A_{i}(x' -y', x_n, t-s)  \De'_yg_n^{\mathcal T} (y',s) dy' ds}\\
\lesssim&\int_0^t \frac1{(t-s) ^\frac12} e^{-\frac{x_n^2}{4(t-s)}}  \phi (1 -s) ds  \\
\approx&\left\{\begin{array}{cl}
 \max(1,  x_n \phi(x_n^2)) + \phi( 1-t)  (1-t)^\frac32 x_n^{-2}  e^{-\de_1 \frac{x_n^2}{1-t}}  \quad &\mbox{if}\,\, 1-t< x_n^2,    \vspace{2mm} \\
 \max(1, (1 -t)^{\frac12} \phi(1-t))  \quad &\mbox{if}\,\, x_n^2 < 1-t.
 \end{array}
 \right.
 \end{split}
\end{align}
On the other hand, due to \eqref{eq1214-1},  \eqref{eq1214-1-1}  and \eqref{tildephi-25-2}, we compute for the first term as follows:
\begin{align}
\begin{split}
& \int_0^t \int_{\Rn}  A_{i}(x' -y', x_n, t-s)  D_s  g_n^{\mathcal T} (y',s) dy' ds\\
\approx& -\int_0^t \frac1{(t-s) ^\frac12} e^{-\frac{x_n^2}{4(t-s)}}   \phi'(1 -s) ds \\
\approx & - \left\{\begin{array}{cl}\vspace{2mm}
 x_n  \phi' (x_n^2)  + \phi'( 1-t) (1-t)^\frac32 x_n^{-2}  e^{-\de_1 \frac{x_n^2}{1-t}}\quad &\mbox{if}\,\, 1-t< x_n^2,\\
 (1 -t)^{\frac12} \phi'(1-t)  \quad &\mbox{if}\,\,x_n^2 < 1-t.
 \end{array}
 \right.
 \end{split}
\end{align}
Thus,  using Lemma \ref{lemma240109},  it follows that
\begin{align}\label{eq1129-3}
|D_{x_n} w^B_i(x,t)|
&\leq -c   ( 1-t)^\frac12 \phi'(1-t) {\bf I}_{\calD_+} + x_n \phi'(x_n^2)  {\bf I}_{\calD_-}.
%\approx  ( 1-t)^{-\frac12-a} {\bf I}_{\calD_+}+ x_n^{-1-2a}  {\bf I}_{\calD_-}
 \end{align}
Hence, we obtain the second part in \eqref{eq0829-1-8}.
In addition, we also note that
\begin{align}
\begin{split}
D_{x_n} w_i(x,t)
 & \approx  \left\{\begin{array}{cl}\vspace{2mm}
  - ( 1-t)^\frac12 \phi'(1-t)   \quad &\mbox{ if }\,\, 0<1-t<\min\bket{t_0, t_1} \mbox{ and } (x,t)\in \tilde{ {\calD}_+}\,\\
 -x_n \phi'(x_n^2)  \quad&\mbox{ if }\,\, 0<1-t<\min\bket{t_0, t_1} \mbox{ and } (x,t)\in \tilde { {\calD}_-}.
 \end{array}
 \right.
  \end{split}
 \end{align}
This completes the proof of \eqref{1218-20}.
\\
$\bullet$ {\bf (The case $ 1 < t <  1 + t_0$)}
\\
We first recall that
\begin{align}\label{eq1018-55}
  \begin{split}
D_{x_n} w_{i}^{\mathcal B}(x,t) & =     \int_0^1  \int_{\Rn} D^2_{x_n} A_i (x' -y', x_n, t-s)  g_n (y',s)  dy' ds,
  \end{split}
\end{align}
where $A_i$ is defined in \eqref{240103-1}. Since $|D^2_{x_n} A_i (x' -y', x_n, t-s)| \leq c  (t -s)^{-\frac32} e^{-\frac{x_n^2}{4(t-s)}}$,  it follows from \eqref{tildephi-15} that
\begin{align*}
|D_{x_n} w_{i}^{\mathcal B}(x,t) | \leq c \int_0^1  (t -s)^{-\frac32} e^{-\frac{x_n^2}{4(t-s)}} \phi(1 -s) ds \leq c \bke{(t-1) \vee x_n^2}^{ -\frac12} \phi\bke{(t-1) \vee x_n^2}).
\end{align*}
 We complete the proof of the first part in \eqref{eq0829-1-8}.

In case that $ x_n \leq \ep_0 \sqrt{ t -1}$, it is direct that   for $0<s<1$,
\[
D^2_{x_n} \Ga_1(x_n, t-s) \approx -(t -s)^{-\frac32} e^{-\de_1 \frac{x_n^2}{t-s}} \approx -(t -s)^{-\frac32}. %\quad \mbox{ if }\,\,x_n^2< t -s.
\]
Hence, it follows from \eqref{eq1214-1}, \eqref{eq1214-1-1} and \eqref{phi-5}  that
\begin{align} \label{eq1018-30-1}
\begin{split}
 D_{x_n} w_{i}^{\mathcal B}(x,t)
% &  \approx  -\int_0^1   (t -s)^{-\frac32}   \phi (1-s) ds
 \approx    - (t -1)^{-\frac12} \phi (t -1).
   \end{split}
\end{align}
Combining \eqref{eq1129-1} and \eqref{eq1018-30-1},  for $ x_n \leq \ep_0 \sqrt{ t -1}$, we have
\begin{align*} %\label{eq1018-30-1-1}
\begin{split}
 D_{x_n} w_{i} (x,t)
% &  \approx  -\int_0^1   (t -s)^{-\frac32}   \phi (1-s) ds
 \approx    - (t -1)^{-\frac12} \phi (t -1),
   \end{split}
\end{align*}
which completes the proof of \eqref{0126-1}  for $ x_n \leq \ep_0 \sqrt{ t -1}$.

Next, we consider the case that $ \ep_1 \sqrt{t-1} < x_n  $.
Dividing time interval into two parts such as $0<s<2-t$ and $2-t<s<1$, we split the integral
 \begin{align}\label{eq1018-55}
  \begin{split}
D_{x_n} w_{i}^{\mathcal B}(x,t) =&     \int_0^{ 2-t}  \int_{\Rn} D^2_{x_n} A_i (x' -y', x_n, t-s)  g_n (y',s)  dy' ds \\
 &+    \int_{ 2-t}^1 \int_{\Rn} D^2_{x_n} A_i (x' -y', x_n, t-s)  g_n (y',s)  dy'  ds\\
 :=&J_1+J_2.
  \end{split}
\end{align}
Noting that $  D^2_{x_n} \Ga_1(x_n, t-s)  \approx  (t -1)^{-\frac52}x_n^2 e^{-\frac{x_n^2}{4(t-1)}} $ for    $  2 -t < s$,    we obtain
  \begin{align}\label{240103-2}
  \begin{split}
 %\int_{ 2-t}^1 \int_{\Rn} D^2_{x_n} A_i (x' -y', x_n, t-s)  g_n (y',s)  dy'  ds
 J_2
&\approx   (t -1)^{-\frac52}x_n^2 e^{-\frac{x_n^2}{4(t-1)}} \int_{ 2-t}^1  \phi(1 -s) ds \approx      (t -1)^{-\frac32}x_n^2 e^{-\frac{x_n^2}{4(t-1)}}    \phi(t-1 ).
  \end{split}
\end{align}
Using  that $(D_t -\De) A_i =0$ for $ (x,t) \in \R_+ \times (0, \infty)$.
Since $\phi(1)=0$ and integrating $J_1$ by parts, we get
 \begin{align}\label{eq1018-55-1}
  \begin{split}
J_1
% \int_0^{ 2-t}  \int_{\Rn} D^2_{x_n} A_i (x' -y', x_n, t-s)  g_n (y',s)  dy' ds
=  &\int_0^{ 2-t}  \int_{\Rn}  (D_t -\De'_x) A_i (x' -y', x_n, t-s)  g_n (y',s)  dy' ds \\
 %= & \int_0^{ 2-t}  \int_{\Rn} A_i (x' -y', x_n, t-s)   (D_s -\De'_y)  g_n (y',s)  dy' ds  \\
 % & - \int_{\Rn} A_i (x' -y', 2t -2) \phi(t-1) g^{\mathcal S}_n(y') dy'\\
   =&   \int_0^{ 2-t}  \int_{\Rn} A_i (x' -y', x_n, t-s)   D_s   g_n (y',s)  dy' ds  \\
   &   -  \int_0^{ 2-t}  \int_{\Rn} A_i (x' -y', x_n, t-s)   \De'_y  g_n (y',s)  dy' ds  \\
  & - \int_{\Rn} A_i (x' -y', 2t -2) \phi(t-1) g^{\mathcal S}_n(y') dy'\\
  :=&J_{11}+J_{12}+J_{13}.
  \end{split}
\end{align}
It is straightforward from \eqref{eq1214-1} and \eqref{eq1214-1-1} that $J_{13}$ satisfies
\begin{align}
J_{13}=- \int_{\Rn} A_i (x' -y', 2t -2) \phi(t-1) g_n(y') dy' & \approx -\frac1{(t -1)^\frac12} e^{-\frac{x_n^2}{ 4(t -1)}} \phi(t -1).
\end{align}
From the proof of \eqref{tildephi-15} , the second term $J_{12}$ is controlled as follows:
\begin{align}
\begin{split}
\abs{J_{12}}
&\le\abs{\int_0^{ 2-t}  \int_{\Rn} A_i (x' -y', x_n, t-s)  \De'_y  g_n (y',s)  dy' ds }\\
 &  \leq c \int_0^{2 -t} \frac{1}{(t -s)^\frac12} e^{-\frac{x_n^2}{4(t -s)}} \phi(1 -s) ds \\
 &  \leq c \int_0^{2 -t} \frac{1}{(1 -s)^\frac12} e^{-\de_1\frac{x_n^2}{1 -s}} \phi(1 -s) ds\\
 &  = c \int_{t-1}^1 \frac{1}{s^\frac12} e^{- \de_1\frac{x_n^2}{s}} \phi(s) ds\\
 & \leq c\max( 1,   x_n  \phi (  x_n^2) ).
 \end{split}
\end{align}
Due to \eqref{tildephi-10-1}, we have
\begin{align}\label{240103-3}
J_{11}
%=\int_0^{ 2-t}  \int_{\Rn} A_i (x' -y', x_n, t-s)   D_s  g_n (y',s)  dy' ds
& \approx  -\int_0^{ 2-t}  \frac1{(t -s)^\frac12} e^{-\frac{x_n^2}{4( t -s)}}  \phi'(1-s) ds %=  \int^1_{ t-1}  \frac1{s^\frac12} e^{-\frac{x_n^2}{ s}}  \phi'(s) ds
\approx -x_n \phi' ( x_n^2).
\end{align}
We note for $\ep_1 \sqrt{t-1}  < x_n$ with  sufficiently large $\ep_1 >0$ that
%\begin{align*}
 %(t -1)^{-\frac32}x_n^2 e^{-\frac{x_n^2}{4(t-1)}}    \phi(t-1 ) -  (t -1)^{-\frac12}  e^{-\frac{x_n^2}{4(t-1)}}    \phi(t-1 )  & \approx -  (t -1)^{-\frac12}  e^{-\frac{x_n^2}{\de (t-1)}}    \phi(t-1 ),\\
 % \max( 1,   x_n  \phi (  x_n^2) ) - x_n \phi' (x_n^2)  & \approx - x_n \phi' (x_n^2).
% \end{align*}
\begin{align*}
 (t -1)^{-\frac32}x_n^2 e^{-\frac{x_n^2}{4(t-1)}}    \phi(t-1 ) -  (t -1)^{-\frac12}  e^{-\frac{x_n^2}{4(t-1)}}    \phi(t-1 )  & \approx (t -1)^{-\frac32}x_n^2 e^{-\frac{x_n^2}{4(t-1)}}    \phi(t-1 ) ,\\
  \max( 1,   x_n  \phi (  x_n^2) ) - x_n \phi' (x_n^2)  & \approx - x_n \phi' (x_n^2).
 \end{align*}
%{\tt... the first one seems incorrect... below need corrections...}
From  \eqref{eq1018-55} to \eqref{240103-3} , % and Lemma \ref{lemma240109},
for $ t -1 < t_0$ for small $ t_0$,  we have
% \begin{align}\label{eq240103-4}
% \begin{split}
%D_{x_n} w_{i}^{\mathcal B}(x,t)    & \approx   - x_n \phi' (x_n^2) -  (t -1)^{-\frac12}  e^{-\frac{x_n^2}{\de (t-1)}}    \phi(t-1 ).
% \end{split}
%\end{align}
 \begin{align}\label{eq240103-4}
 \begin{split}
D_{x_n} w_{i}^{\mathcal B}(x,t)    & \approx   - x_n \phi' (x_n^2) + (t -1)^{-\frac32}x_n^2 e^{-\frac{x_n^2}{4(t-1)}}    \phi(t-1 ).
 \end{split}
\end{align}
With the aid  of \eqref{eq1129-1},  \eqref{eq240103-4} and Lemma \ref{lemma240109}, we  complete the proofs of  \eqref{Jan02-10}  for $  \ep_1 \sqrt{ t -1} \leq x_n $.
\qed

%\section{Proof of Theorem \ref{theo1114-1} (Estimate of pressure)}
%\setcounter{equation}{0}

\subsection{Estimate of pressure}

For convenience, we denote
 \begin{align*}
 {\mathcal U} g_n(x',0,t) = \int_0^t \int_{A} \Ga(x' -y', 0, t-s) g_n(y',s)dy'ds.
 \end{align*}
Recalling that $ g_n (x', 0) =0$ and \eqref{representationpressure}, we decompse $p = p_1 + p_2 + p_3$, where
\begin{align}\label{eq1123-1}
\begin{split}
p_1(x,t) & = -   D_{x_n} D_{x_n} N *'  g_n(x, t),\\
p_2(x,t) & =       2  (D_t -\De')  D_{x_n} N *' {\mathcal U} g_n(x,t) \\
&=  2  D_t D_{x_n} N *' {\mathcal U} g_n(x,t)
-  2\De'  D_{x_n} N *' {\mathcal U} g_n(x,t) \\
&:= p_{21} + p_{22},\\
p_3(x,t) & =   N *'  D_t g_n (x, t).
\end{split}
\end{align}
Since $|x' - y'| \geq 1$ for $|x'| < \frac12$ and $y' \in A$, we can estimate $p_1$ and $p_3$ as follows:
\begin{align}\label{p1018-1}
\begin{split}
|p_1(x,t)| &\le  c \phi (1 -t) {\bf I}_{\bket{0 < t < 1}},\\%=c(1-t)^{-1-a}{\bf I}_{\bket{0 < t < 1}},\\
p_3(x,t) &\approx - \phi'(1-t) {\bf I}_{\bket{0 < t < 1}}.
%=(1-t)^{-1-a}{\bf I}_{\bket{0 < t < 1}}.
\end{split}
\end{align}
It remains to estimate $p_2$.  Firstly, for $p_{22}$ we note that
\begin{align}\label{p1018-2}
\begin{split}
p_{22}(x,t) & =-   2   \De'  D_{x_n} N *' {\mathcal U} g_n(x,t)=-   2     D_{x_n} N *' {\mathcal U} \De' g_n(x,t)\\
& = -2\int_0^t \int_{A} ( t -s)^{-\frac12} \De'  g_n(y',s) \int_{\Rn}  \Ga'(x' -y'-z', t-s) D_{x_n} N(z', x_n)dz' dy'ds.
\end{split}
\end{align}
From  \eqref{p1018-2} and $\eqref{240116-1} $, we have
\begin{align}\label{p1018-3}
\begin{split}
\abs{p_{22}(x,t)}
& \leq         c   \int_0^t  (t -s)^{-\frac12}  \big( x_n    + (t -s)^{-\frac{n-1}2} e^{-\frac1{(t-s)} }      \big)    g^{\mathcal T} _n(s)        ds \\
&\leq c \big( 1 + x_n \max(1, |t -1|^\frac12 \phi(|t -1|))  \big)\\
&\leq c \max(1, x_n|t -1|^\frac12 \phi(|t -1|).
\end{split}
\end{align}
$\bullet$ {\bf (The case  $\frac34 < t < 1$)}
\\
Since $g_n (y', 0) =0  $, by intergration by parts and \eqref{240116-1-8},  we have
\begin{align}\label{p1018-4}
\begin{split}
p_{21}(x,t)
&=\int_0^t \int_{A} ( t -s)^{-\frac12} \phi'(1-s)  \int_{\Rn}  \Ga'(x' -y'-z', t-s) D_{x_n} N(z', x_n)dz'    g_n(y')       dy'ds\\
 &\approx    \int_0^t   ( t -s)^{-\frac12} \phi' (1-s) \big( x_n + (t -s)^{-\frac{n-1}2} e^{- \de \frac1{(t-s)} }   \big) ds\\
  &\approx   \max(1,  x_n ( 1-t)^\frac12 \phi'(1-t)).
 \end{split}
 \end{align}
%Hence, we have
%\begin{align}\label{0301-1}
%c_1x_n  (1 -t)^{\frac12 } \phi'(1-t)  -c_2  \leq p_{21} (x,t) \leq c_3 x_n  (1 -t)^{\frac12 } \phi'(1-t)  + c_4.
%\end{align}
Therefore, adding up \eqref{p1018-1}, \eqref{p1018-3}  and \eqref{p1018-4},  we otain
%\begin{align}\label{pressure-100}
%|p(x,t)| \leq c | \phi'(1-t)|.%\approx (1-t)^{-1-a}.
% \end{align}
for $  x_n < \frac12$ and $ 1 -t < t_1$,  %by \eqref{p1018-1}, \eqref{p1018-3} \eqref{p1018-4} and \eqref{0301-1},   we otain
\begin{align}\label{pressure-100}
p(x,t) \approx -c    \phi'(1-t).%\approx (1-t)^{-1-a}.
 \end{align}
We prove the first part of \eqref{1219-3} for $ t < 1$.
\\
\\
$\bullet$ {\bf (The case $ 1 < t$)}\,\,
Since $p_{1}(x,t) = p_3(x,t) =0$ for $t>1$,
 Lemma \ref{lemma0709-1} implies that
\begin{align}
\begin{split}
 p_{22}  & = -2\int_0^1 \int_{\Rn} ( t -s)^{-\frac12} \De'  g_n(y',s) \big(D_{x_n} N(x'-y', x_n) + (I+J_1)(x'-y', x_n, t)        \big) dy'ds\\
 & \leq c  \int_0^1  \big( x_n ( t -s)^{-\frac12}     + (t -s)^{-\frac{n}2} e^{-\frac1{8(t-s)} }      \big)   \phi( 1-s)  ds \\
 &  \leq c  \big( 1  + x_n\max(1, (t -1)^\frac12 \phi(t -1)) \big)
 \end{split}
\end{align}
and
\begin{align}
\begin{split}
  p_{21}& = \int_0^1 \int_{\Rn} ( t -s)^{ -\frac32}   \phi(1-s) \int_{\Rn}  \Ga'(x' -y'-z', t-s) D_{x_n} N(z', x_n)dz'    g_n(y')       dy'ds\\
& \approx     \int_0^1  \big( x_n ( t -s)^{ -\frac32}  +  (t -s)^{-\frac{n +1}2} e^{-\frac1{(t-s)} }     \big)\phi(1-s) ds\\
&  \approx  x_n( t-1)^{ -\frac12}\phi(t-1) +1.
\end{split}
\end{align}
Hence, we have
\begin{align}\label{0301-1-1}
c_1x_n  (t-1)^{-\frac12 } \phi(t-1)  -c_2  \leq p_{21} (x,t) \leq c_3 x_n  (t-1)^{-\frac12 } \phi(t-1)  + c_4,
\end{align}
which implies the second part of \eqref{1219-3} for $t>1$, and thus we complete the proof of \eqref{1219-3}.
%\begin{align}
%p(x,t) \approx
% x_n( t-1)^{ -\frac12}\phi(t-1).
%\end{align}
%This completes the proof of \eqref{1219-3}.
 \qed

\section{Proof of Proposition \ref{theo1001}}
\label{NSequations}
\setcounter{equation}{0}

To prove Propositiion \ref{theo1001}, we consider the following equations
\begin{align}\label{1001-1}
\left\{\begin{array}{l}\vspace{2mm}
W_t -\De W + \na Q = f  \quad \R_+ \times (0, T),\\
\vspace{2mm}
{\rm div} \, W =0 \quad \R_+ \times (0,T),\\
W|_{t =0} =0, \quad W|_{x_n =0 } =0,
\end{array}
\right.
\end{align}
where $ 0 < T \leq \infty$.
We recall some known results for the system \eqref{1001-1}.

\begin{prop}\label{theo0503}(\cite[Proposition 2.3]{CK20})
Let $1< p, q< \infty$. Suppose that $f = {\rm div} \, {\mathcal F}$ with $F \in L^q (0,\infty; L^p (\R_+))$, $ {\mathcal F}|_{x_n =0} =0 $.
Then, the solution $W$ of \eqref{1001-1}  satisfies
\begin{align}\label{0504-1}
\|  \na W\|_{L^q (0, \infty;   L^p (\R_+))} \leq c  \| {\mathcal F}\|_{L^{q} (0,\infty;  L^p (\R_+))}.
\end{align}
\end{prop}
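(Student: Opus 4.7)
The plan is to derive \eqref{0504-1} from the explicit Green tensor representation of the Stokes system in the half-space combined with parabolic singular integral theory. Let $\mathcal{G}_{ij}(x,y,t-s)$ denote Solonnikov's Green tensor for \eqref{1001-1}, so that the unique solution with zero initial and no-slip boundary data is represented as
\begin{equation*}
W_i(x,t)=\int_0^t\!\!\int_{\mathbb R^n_+}\mathcal{G}_{ij}(x,y,t-s)\,f_j(y,s)\,dy\,ds.
\end{equation*}
The first step is to transfer the divergence from $f=\mathrm{div}\,\mathcal F$ onto the kernel. Writing $f_j=\partial_{y_k}\mathcal F_{jk}$ and integrating by parts in $y$, the boundary term on $\{y_n=0\}$ vanishes thanks to the hypothesis $\mathcal F|_{x_n=0}=0$, and the boundary terms on the remaining tangential faces vanish by decay at infinity. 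This yields
\begin{equation*}
W_i(x,t)=-\int_0^t\!\!\int_{\mathbb R^n_+}\partial_{y_k}\mathcal{G}_{ij}(x,y,t-s)\,\mathcal F_{jk}(y,s)\,dy\,ds.
\end{equation*}

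Differentiating in $x$ then gives $\nabla W$ as a parabolic singular integral operator $\mathcal T\mathcal F$ with kernel $K(x,y,t-s):=\nabla_x\nabla_y\mathcal{G}(x,y,t-s)$. The second step is to verify that $K$ satisfies the standard parabolic Calder\'on--Zygmund bounds: by Solonnikov's pointwise estimates for $\mathcal G$ and its derivatives (analogous to the bound \eqref{est-L-tensor} used earlier in the paper), one has the size estimate $|K(x,y,\tau)|\lesssim (|x-y|^2+|x-y^*|^2+\tau)^{-(n+2)/2}$, where $y^*$ is the reflection of $y$ across $\{y_n=0\}$, together with matching H\"older/gradient estimates in $(x,y,\tau)$ away from the diagonal. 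These bounds, together with the parabolic scaling $(x,t)\mapsto(\lambda x,\lambda^2 t)$, show that $\mathcal T$ is a parabolic Calder\'on--Zygmund operator on $\mathbb R^n_+\times(0,\infty)$.

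The third step is to establish the $L^p_xL^q_t$ bound. For $p=q=2$, one obtains boundedness by an energy argument: testing \eqref{1001-1} against $W$, using $\int W\cdot\nabla Q=0$, and applying $(W,\mathrm{div}\,\mathcal F)=-(\nabla W,\mathcal F)$ together with Cauchy--Schwarz and absorption gives the estimate. For general $1<p,q<\infty$, one upgrades by the Benedek--Calder\'on--Panzone theorem on vector-valued singular integrals: since $K$ satisfies the H\"ormander-type condition in the spatial variable uniformly in the temporal variable, the scalar $L^p_x$-boundedness for each fixed $t$ lifts to the mixed $L^q_tL^p_x$ bound, yielding \eqref{0504-1}.

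The main obstacle is the verification of the singular-integral conditions for $K=\nabla_x\nabla_y\mathcal G$ across the boundary region $\{x_n\approx 0\}\cup\{y_n\approx 0\}$, where $\mathcal G$ deviates from the whole-space heat kernel through the boundary-layer Poisson-type correction involving $L_{ij}$ and $B_{in}$ as introduced in \eqref{L-tensor}--\eqref{B-tensor}. The key point is that although these correction terms lose one power of decay compared to the interior kernel when only one derivative is taken, applying $\nabla_x\nabla_y$ together with the cancellation built into the pressure-subtraction in $\mathcal G$ restores the correct Calder\'on--Zygmund scaling; verifying this cancellation rigorously (e.g.\ via the representation \eqref{Poisson-tensor-K} and the reflection structure of $\mathcal G$) is the technical core of the argument.
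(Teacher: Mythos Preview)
The paper does not prove this proposition at all: it is stated with a citation to \cite[Proposition 2.3]{CK20} and then used as a black box in the proof of Proposition~\ref{theo1001}. So there is no ``paper's own proof'' against which to compare your proposal.

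As for the proposal itself, the strategy you outline---Green tensor representation, integration by parts to shift the divergence from $f=\mathrm{div}\,\mathcal F$ onto the kernel (using $\mathcal F|_{x_n=0}=0$ to kill the boundary term), and then parabolic Calder\'on--Zygmund theory for the resulting kernel $\nabla_x\nabla_y\mathcal G$---is indeed the standard route to such estimates and is essentially what is carried out in the cited reference. Your sketch is plausible at the level of an outline, but note that you have not actually executed the technical core you yourself flag: the verification that $\nabla_x\nabla_y\mathcal G$ obeys the correct Calder\'on--Zygmund bounds near the boundary, where the correction terms $L_{ij}$ and $B_{in}$ enter, is delicate and is precisely the content of the cited result. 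In particular, the size bound you write, $|K(x,y,\tau)|\lesssim(|x-y|^2+|x-y^*|^2+\tau)^{-(n+2)/2}$, is not quite the right form---the Green tensor in the half-space is not a simple function of $x-y$ and $x-y^*$, and the boundary-layer pieces have anisotropic decay in $x_n$, $y_n$ separately (cf.\ \eqref{est-L-tensor}). The actual argument requires splitting $\mathcal G$ into its whole-space (heat-type) part and its boundary correction, and handling the latter by more refined pointwise estimates or by potential-theoretic reductions rather than by a single global Calder\'on--Zygmund kernel bound. What you have written is a reasonable roadmap, but it is not a proof.
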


\begin{prop}\label{thm-stokes}(\cite[Theorem 1.2]{CJ})\,\,
Let  $1<p, q<\infty$. Assume that  $  f =  {\rm div} {\mathcal F}$ with  ${\mathcal F}\in
L^{q_1}(0,\infty; L^{p_1}(\hR)) $, where $p_1, q_1$ satisfy
$n/p_1+2/q_1=n/p+2/q+1$ with $q_1\leq q$ and $p_1 \leq p$. Then, there is the unique
very weak solution $W\in L^q(0,\infty;L^p(\R_+))$ of the Stokes equations
\eqref{1001-1}  such that $W$
satisfies
\[
\| W\|_{L^q(0,\infty;L^p(\R_+))}\leq c \|   {\mathcal F}\|_{L^{q_1}(0,\infty; L^{p_1}(\R_+))}.
\]
\end{prop}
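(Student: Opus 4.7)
My approach is a duality argument that reduces the estimate for the very weak solution $W$ to an $L^{p'}$-maximal regularity estimate for the (backward) adjoint Stokes system combined with a parabolic Sobolev embedding. The very weak formulation is obtained by pairing $W_t-\Delta W+\nabla Q=\mathrm{div}\,\mathcal{F}$ against a smooth solenoidal test field $\Phi$ that vanishes on $\{x_n=0\}$ and at $t=T$: integrating by parts, and using $\mathrm{div}\,W=0$ and $\mathrm{div}\,\Phi=0$ to cancel the two pressure terms, one is left with
\begin{equation*}
\int_0^T\!\!\int_{\R_+}W\cdot(-\Phi_t-\Delta\Phi)\,dxdt=-\int_0^T\!\!\int_{\R_+}\mathcal{F}:\nabla\Phi\,dxdt.
\end{equation*}
A very weak solution $W\in L^q(0,\infty;L^p(\R_+))$ is defined to be a field satisfying this identity for every admissible $\Phi$, together with the divergence-free constraint in the sense of distributions.

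To construct $W$, fix $\varphi\in C^\infty_c(\R_+\times(0,\infty))$ with $\mathrm{div}\,\varphi=0$ and solve the backward adjoint Stokes problem
\begin{equation*}
-\Phi_t-\Delta\Phi+\nabla\Pi=\varphi,\quad \mathrm{div}\,\Phi=0,\quad \Phi|_{x_n=0}=0,\quad \Phi(\cdot,\infty)=0.
\end{equation*}
After reversing time, this is a forward Stokes system of the type covered by Proposition \ref{theo0503} (and the underlying $L^p$-maximal regularity theory for Stokes in the half space with no-slip boundary condition), yielding
\begin{equation*}
\|\Phi_t\|_{L^{q'}_tL^{p'}_x}+\|\nabla^2\Phi\|_{L^{q'}_tL^{p'}_x}+\|\nabla\Pi\|_{L^{q'}_tL^{p'}_x}\leq c\|\varphi\|_{L^{q'}_tL^{p'}_x}.
\end{equation*}
Define $L(\varphi):=-\int\mathcal{F}:\nabla\Phi\,dxdt$. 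By H\"older's inequality, $|L(\varphi)|\leq\|\mathcal{F}\|_{L^{q_1}_tL^{p_1}_x}\|\nabla\Phi\|_{L^{q_1'}_tL^{p_1'}_x}$, and the crucial ingredient is the parabolic Sobolev embedding
\begin{equation*}
\|\nabla\Phi\|_{L^{q_1'}_tL^{p_1'}_x}\leq c\bigl(\|\Phi_t\|_{L^{q'}_tL^{p'}_x}+\|\nabla^2\Phi\|_{L^{q'}_tL^{p'}_x}\bigr),
\end{equation*}
which is valid precisely when $n/p_1'+2/q_1'=n/p'+2/q'-1$; conjugating gives the hypothesis $n/p_1+2/q_1=n/p+2/q+1$. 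The zero boundary trace of $\Phi$ allows an odd reflection across $\{x_n=0\}$ that reduces this embedding to the classical one on $\R^n\times\mathbb{R}$. Combining these bounds gives $|L(\varphi)|\leq c\|\mathcal{F}\|_{L^{q_1}_tL^{p_1}_x}\|\varphi\|_{L^{q'}_tL^{p'}_x}$, so $L$ extends by density to a bounded linear functional on the closure of divergence-free $C^\infty_c$ fields in $L^{q'}_tL^{p'}_x$; by Riesz representation and the Helmholtz decomposition this functional is realized by a unique solenoidal $W\in L^q_tL^p_x$ satisfying the claimed estimate, and this $W$ is a very weak solution by construction.

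Uniqueness follows from the same duality: if $W_1,W_2$ are two very weak solutions with the same $\mathcal{F}$, their difference satisfies $\int W\cdot(-\Phi_t-\Delta\Phi)=0$ for every $\Phi$ arising from the adjoint construction above; since $\varphi\mapsto\Phi$ is surjective onto a dense class of test fields in $L^{q'}_tL^{p'}_x$, one gets $W=0$. The main obstacle I anticipate is the careful justification of the parabolic Sobolev embedding with the boundary condition $\Phi|_{x_n=0}=0$ (which is needed in the full range $1<p,q<\infty$ and under the restrictions $p_1\leq p$, $q_1\leq q$, the latter being required so that the embedding is not of supercritical type), and the verification that all integrations by parts are legitimate despite the low a priori regularity of $\mathcal{F}$ — in particular one needs $\mathcal{F}$ to admit a trace-zero sense on $\{x_n=0\}$ so that $\int\mathcal{F}:\nabla\Phi=-\int\mathrm{div}\,\mathcal{F}\cdot\Phi$ with no boundary contribution.
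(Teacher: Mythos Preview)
The paper does not prove this proposition; it is quoted as \cite[Theorem 1.2]{CJ} and used as a black box in Section \ref{NSequations}, so there is no ``paper's own proof'' to compare against. Your duality argument is the standard route to such very weak solution estimates and is in the right spirit: solve the adjoint problem, apply $L^{p'}$ maximal regularity, and use the parabolic Sobolev embedding $W^{2,1}_{p',q'}\hookrightarrow L^{q_1'}_tW^{1,p_1'}_x$ whose exponent relation dualizes to the stated scaling condition.

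Two points deserve care. First, you invoke Proposition \ref{theo0503} for the adjoint bound, but that proposition concerns forcing in divergence form and only yields $\|\nabla W\|\leq c\|\mathcal{F}\|$; for the backward problem $-\Phi_t-\Delta\Phi+\nabla\Pi=\varphi$ you need genuine maximal regularity $\|\Phi_t\|+\|\nabla^2\Phi\|\leq c\|\varphi\|$ for the Stokes semigroup in the half space (Solonnikov, Giga--Sohr, etc.), which is a different input. Second, the Riesz representation step produces a functional on solenoidal fields, and identifying it with a solenoidal $W$ requires the Helmholtz projection to be bounded on $L^p(\R_+)$ for the full range $1<p<\infty$; this is true but should be cited. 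With those references in place your outline is a valid proof, though it is not the paper's own contribution.
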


\begin{prop}\label{theoexternel-boundary}\cite[Theorem 2.2]{CCK}
Let $ n +2 <  p < \infty $. Let $ f= {\rm div} \, {\mathcal F}$ with  $ {\mathcal F}
\in L^p (\R_+ \times (0, T)) $.   Then, there exists unique  very weak solution, $w \in L^\infty (\R_+ \times (0, T))$, of the Stokes equations
\eqref{1001-1} such that
\begin{equation*}
\| W\|_{L^\infty (\R_+ \times (0, T))} \leq cT^{\frac12(1-\frac{n+2}{p} )} \|  {\mathcal F}  \|_{ L^p (\R_+ \times (0, T))}.
\end{equation*}
\end{prop}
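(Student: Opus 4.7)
The plan is to prove the $L^\infty$ estimate via an explicit representation of $W$ in terms of the half-space Stokes Green's tensor $\mathcal{G}(x,y,t)$, followed by kernel estimates and Hölder's inequality. Since $f = \mathrm{div}\,\mathcal{F}$ with $\mathcal{F}|_{x_n=0}=0$, the solution of \eqref{1001-1} can be written as
\[
W(x,t) = \int_0^t\!\int_{\R_+} \mathcal{G}(x,y,t-s)\,\mathrm{div}_y \mathcal{F}(y,s)\,dy\,ds = -\int_0^t\!\int_{\R_+} \nabla_y \mathcal{G}(x,y,t-s)\cdot \mathcal{F}(y,s)\,dy\,ds,
\]
after integration by parts, with the boundary terms vanishing thanks to either the homogeneous boundary condition on $\mathcal{G}$ or on $\mathcal{F}$. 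A direct application of Hölder's inequality then yields
\[
|W(x,t)| \le \|\nabla_y \mathcal{G}(x,\cdot,\cdot)\|_{L^{p'}(\R_+\times(0,t))}\,\|\mathcal{F}\|_{L^p(\R_+\times(0,t))},
\]
where $p'=p/(p-1)$.

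The next step is to bound the $L^{p'}$ norm of $\nabla_y \mathcal{G}$ on the parabolic cylinder. Using the Solonnikov-type decomposition recalled in \eqref{Poisson-tensor-K}–\eqref{est-L-tensor}, every component of $\nabla_y \mathcal{G}$ is pointwise dominated by the standard parabolic bound
\[
|\nabla_y \mathcal{G}(x,y,s)| \;\le\; \frac{c}{(|x-y|^2+s)^{(n+1)/2}},
\]
up to the nonlocal correction terms built from the Newtonian kernel $N$ composed with Gaussians, which one checks satisfy the same or faster decay after integration in $y$. The scaling substitution $y = x + \sqrt{s}\,z$ gives
\[
\int_0^t\!\int_{\R_+} (|x-y|^2+s)^{-(n+1)p'/2}\,dy\,ds \;\le\; c\int_0^t s^{\,n/2-(n+1)p'/2}\,ds,
\]
which is finite near $s=0$ exactly when $(n+1)p' < n+2$, i.e.\ $p > n+2$. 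Explicit evaluation yields the exponent
\[
\Big(\tfrac{n+2}{2p'} - \tfrac{n+1}{2}\Big) = \tfrac{1}{2}\Big(1-\tfrac{n+2}{p}\Big),
\]
after taking the $(1/p')$-th root, giving the stated $T$-dependence once one takes the supremum of $t\in(0,T)$.

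The nonlocal components of $\nabla_y\mathcal{G}$ involving $D_yN$ composed with a Gaussian require a separate estimate: one uses Lemma \ref{lemma0709-1} to split them into a leading $D_yN$-type term, which is time-independent and thus gains integrability in $s$, and a fast-decaying remainder with Gaussian decay. Both contributions are dominated by the main pointwise bound above, so the same $L^{p'}$ computation applies. Uniqueness of the very weak solution in $L^\infty(\R_+\times(0,T))$ follows by a standard duality argument against the adjoint Stokes system, invoking Proposition \ref{thm-stokes} to produce sufficiently regular test fields.

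The main obstacle is the careful treatment of the nonlocal pressure-related part of the half-space Green's tensor, since it is not translation-invariant and its kernel estimates must be combined with the boundary behavior of $\mathcal{F}$ to justify the integration by parts; the borderline case $p = n+2$ cannot be reached because the time integral of the kernel fails to converge logarithmically at $s=0$, which is precisely the restriction stated in the proposition.
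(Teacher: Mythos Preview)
The paper does not prove this proposition; it is simply quoted from \cite[Theorem~2.2]{CCK} and used as a black box in Section~\ref{NSequations}. So there is no ``paper's own proof'' to compare against.

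Your outline follows the standard route for maximum-modulus estimates: represent $W$ via the half-space Stokes Green tensor, integrate by parts to put the derivative on the kernel, and apply H\"older's inequality in space-time. The scaling computation leading to the exponent $\tfrac12(1-\tfrac{n+2}{p})$ is correct, and the condition $p>n+2$ is indeed what makes the time integral near $s=0$ converge.

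One point deserves more care. The pointwise bound
\[
|\nabla_y \mathcal G(x,y,s)|\le c\,(|x-y|^2+s)^{-(n+1)/2}
\]
does \emph{not} hold for all pieces of the half-space Stokes Green tensor: the genuinely nonlocal components built from $L_{ij}$ and the Gaussian--Newtonian composites fail this pointwise estimate and instead satisfy anisotropic bounds involving $x_n,y_n$ separately (see e.g.\ Solonnikov's work on half-space Green tensors). You acknowledge this, but invoking Lemma~\ref{lemma0709-1} is not quite right here: that lemma is tailored to the \emph{Poisson kernel} for boundary data acting on $\Rn$, not to the Green tensor for interior forcing on $\R_+$. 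What one actually needs are the Green-tensor gradient estimates from \cite{CCK} (or the Solonnikov estimates they rely on), which give control of the $L^{p'}$ norm of $\nabla_y\mathcal G(x,\cdot,\cdot)$ directly, including the nonlocal pieces. With those estimates in hand, your H\"older argument goes through exactly as you wrote it; without them, the step ``both contributions are dominated by the main pointwise bound above'' is an assertion rather than an argument.
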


\begin{prop}\label{prop0503}\cite[Proposition 3.4]{CK3}
Suppose  that
$f\in L^{q_1} (0, 1; L^{p_1} (\R_+))$ with $1<p_1, q_1<\infty$.
If $p$ and $q$ satisfy
$p>\frac{n}{n-1}p_1 $ and $\frac2{q_1}+\frac{n}{p_1}-1 \leq  \frac2q  + \frac{n}{p}$,
then
\begin{equation}\label{pi-bound-2}
\| Q\|_{L^{q}(0, 1; L^p (\R_+))} < c \| f\|_{L^{q_1} (0, 1; L^{p_1} (\R_+))}.
\end{equation}
\end{prop}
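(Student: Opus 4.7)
The plan is to derive an explicit representation of the pressure $Q$ via the Neumann Green function of the Laplacian on $\R^n_+$ and then estimate each resulting piece by a combination of Calder\'on--Zygmund and Hardy--Littlewood--Sobolev (HLS) type inequalities in the tangential and normal variables separately. Taking the divergence of the Stokes equation in \eqref{1001-1} and using $\mathrm{div}\, W =0$ yields the elliptic problem $\Delta_x Q(\cdot,t) = \mathrm{div}_x f(\cdot,t)$ in $\R^n_+$; evaluating the normal component of the Stokes equation at $x_n=0$ (using $W|_{x_n=0}=0$ and $\pa_{x_n} W_n|_{x_n=0}=0$ from incompressibility) yields the Neumann condition $\pa_{x_n} Q|_{x_n=0} = f_n|_{x_n=0} - \mathrm{div}'(\pa_{x_n} W'|_{x_n=0})$. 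The second, lower-order term would be absorbed using the standard maximal regularity for $\na^2 W$ (available after writing $f$ in divergence form and invoking Proposition \ref{theo0503}), while the dominant boundary piece $\pa_{x_n} Q|_{x_n=0} = f_n|_{x_n=0}$ is the source of the characteristic pressure-decay rate.

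Solving this elliptic Neumann problem by the half-space Green function decomposes $Q = Q_{\mathrm{vol}} + Q_{\mathrm{bdry}}$, where $Q_{\mathrm{vol}}$ is morally $(-\Delta)^{-1}\mathrm{div}\, f$ on $\R^n_+$ and $Q_{\mathrm{bdry}}$ is a Poisson-type extension of a tangential Riesz potential of the trace $f_n|_{x_n=0}$. For the volume piece, HLS applied pointwise in $t$ yields $\|Q_{\mathrm{vol}}(\cdot,t)\|_{L^p(\R^n_+)} \lesssim \|f(\cdot,t)\|_{L^{p_1}(\R^n_+)}$ under the Sobolev gain $\tfrac{1}{p_1}-\tfrac{1}{p}\le\tfrac{1}{n}$. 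For the boundary piece, an integration by parts in the Green-function representation (which avoids any literal restriction of $f$ to $\pa \R^n_+$) produces a tangential convolution with a Riesz kernel of order one on $\R^{n-1}$, gaining $\tfrac{1}{n-1}$ derivative of tangential integrability; composed with the normal-direction Poisson extension, the result lies in $L^p(\R^n_+)$ precisely when $p > \tfrac{n}{n-1}p_1$, which is exactly the stated hypothesis. The mixed-norm estimate then follows by applying Minkowski's inequality in time and interpolating against the temporal smoothing of the heat semigroup, which accounts for the slack in the condition $\tfrac{2}{q_1}+\tfrac{n}{p_1}-1 \le \tfrac{2}{q}+\tfrac{n}{p}$.

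The main obstacle will be handling the boundary-trace contribution rigorously when $f \in L^{q_1}_t L^{p_1}_x$ has no literal trace on $\pa \R^n_+$; this requires extracting the boundary term implicitly through the integration by parts against the Neumann Green function and justifying the resulting Fubini manipulations via approximation by smooth forcings with uniform control in the target mixed-norm space. A secondary technical point is verifying that the lower-order correction arising from $\mathrm{div}'(\pa_{x_n} W'|_{x_n=0})$ is indeed negligible against the target $L^q_t L^p_x$ norm, which requires combining a sharp tangential trace inequality with the parabolic maximal regularity already invoked. Finally, one must fix the natural normalization of $Q$ coming from the Green-function formula so that the decay as $|x|\to\infty$ is strong enough to make $\|Q\|_{L^p(\R^n_+)}$ finite under the stated conditions.
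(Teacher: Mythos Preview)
The paper does not prove this proposition; it is quoted from \cite[Proposition~3.4]{CK3} and used only as a black box. (In fact, in the proof of Proposition~\ref{theo1001} in Section~\ref{NSequations} only Propositions~\ref{theo0503}, \ref{thm-stokes} and \ref{theoexternel-boundary} are actually invoked.) There is therefore no argument in the present paper against which to compare your sketch.

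Regarding the sketch itself, there is a structural gap. You reduce $Q$ to an elliptic Neumann problem at each fixed time and then call the velocity-trace contribution $\mathrm{div}'(\partial_{x_n}W'|_{x_n=0})$ a ``lower-order'' correction to be absorbed. But if that term were truly negligible, the map $f(\cdot,t)\mapsto Q(\cdot,t)$ would be purely instantaneous, and no improvement of the time exponent $q$ over $q_1$ would ever be possible. Yet the hypothesis $\tfrac{2}{q_1}+\tfrac{n}{p_1}-1\le\tfrac{2}{q}+\tfrac{n}{p}$ does admit $q>q_1$ (e.g.\ $n=3$, $p_1=q_1=2$, $p=4$, $q=8/3$). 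The temporal gain in \eqref{pi-bound-2} therefore has to come precisely from the history-dependent boundary term you dismissed: writing $Q=\pi_f+Q_W$ with $\pi_f$ the instantaneous Helmholtz pressure and $Q_W$ the harmonic extension with Neumann data $\partial_{x_n}^2 W_n|_{x_n=0}$, the parabolic smoothing sits entirely in $Q_W$. Your final sentence about ``the temporal smoothing of the heat semigroup'' gestures at this, but it is not connected to the elliptic formula you set up; a complete argument must track the $W$-dependent piece through the space--time representation of the half-space Stokes pressure rather than work time-slice by time-slice.
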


Our aim is to establish the existence of solution  $v$ for
\eqref{CCK-Feb7-10} satisfying $v \in L^{2q} ((\R_+ \times (0, \infty))$
and $\na v \in L^q(\R_+ \times (0, \infty))$ for  fixed $ q >\frac{n+2}{2}$.   In order to do that, we
consider the iterative scheme for \eqref{CCK-Feb7-10}, which is
given as follows: For a positive integer $m\ge 1$
\begin{align*}%\label{CCK-Feb7-20}
&v^{m+1}_t-\Delta v^{m+1}+\nabla q^{m+1}=-{\rm
div}\,\left(v^{m}\otimes v^{m}+v^{m}\otimes U+U\otimes
v^{m}+U\otimes U \right),\\
& \qquad \qquad \qquad \qquad \qquad {\rm div} \, v^{m+1} =0
\end{align*}
with homogeneous initial and boundary data, i.e. $v^{m+1}(x,0)=0$ and
$v^{m+1}(x,t)=0$ on $\{x_n=0\}$.

Let $ q > \frac{n+2}2$.
We set $v^1=0$. We then have, due to Proposition \ref{theo0503} and Proposition \ref{thm-stokes},
\begin{align}\label{est-v2-10}
\begin{split}
  \|\na  v^2\|_{L^q   (\R_+ \times (0, T ))}  &\leq c     \||U|^2\|_{L^q   (\R_+ \times (0, T )) }
  \leq c   \|U\|^2_{L^{2q}   (\R_+ \times (0, T ))}
 \leq c  \|w\|^2_{L^{2q} },\\
\| v^2\|_{L^{2q}( \R_+ \times (0, T))}&\leq c \||U|^2\|_{L^{q_1}(0,T; L^q ( \R_+ )) } \leq c  T^{\frac{1}{2}(1-\frac{n+2}{2q})}\||U|^2\|_{  L^q ( \R_+ \times (0, T)) }  \\
&\leq c    T^{\frac{1}{2}(1-\frac{n+2}{2q})}\|w\|^2_{L^{2q} },
\end{split}
\end{align}
where $\frac2{q_1} +\frac{n}q =\frac{n+2}{2q} +1$.
We have  $ A :=      \|w\|_{L^{2q} } + \|\na w\|_{L^{q} }    \leq c\al$, where $\al>0$ is defined in \eqref{0502-6}. We take $ T =1$.  Taking $\al >0$ small such that
$A<\frac{1}{8c}$, where $c$ is the constant in
\eqref{est-v2-10} such that
\begin{align*}
\|\na  v^2\|_{L^{q}( \R_+ \times (0, 1)) }  +   \| v^2\|_{L^{2q}( \R_+ \times (0, 1)) }  \le cA^2<A.
\end{align*}
Then, iterative arguments show that
\begin{align}\label{0531-1}
\begin{split}
 \| v^{m+1}\|_{L^{2q}((\R_+ \times (0, 1)) } &  \leq c
\||v^m|^2+|v^mU|+|U|^2\|_{L^{q}((\R_+ \times (0, 1)) }\\
&\leq  2c\left(\|v^m\|^2_{L^{2q}((\R_+ \times (0, 1)) }+\|w\|^2_{L^{2q}  }\right)   \leq 4c A^2 <A,\\
 \| \na v^{m+1}\|_{L^q (\R_+ \times (0, 1))}
  &  \leq c \big(
\||v^m|^2+|v^mU|+|U|^2\|_{L^q((\R_+ \times (0, 1)) }   \big) \\
 &\quad \leq  2c\left(\|v^m\|^2_{L^{2q}((\R_+ \times (0, 1)) }+\|w\|^2_{L^{2q}((\R_+ \times (0, 1))}  \right)\\
 & \quad   \leq 4c A^2 <A.
 \end{split}
\end{align}
We denote $V^{m+1}:=v^{m+1}-v^{m}$ and $Q^{m+1}:=q^{m+1}-q^{m}$ for $m\ge
1$. We then see that $(V^{m+1}, Q^{m+1})$ solves
\[
V^{m+1}_t-\Delta V^{m+1}+\nabla Q^{m+1}=-{\rm
div}\,\left(V^{m}\otimes v^{m}+v^{m-1}\otimes V^{m}+V^{m}\otimes
U+U\otimes V^{m}\right),
\]
\[
{\rm div} \, V^{m+1} =0
\]
with homogeneous initial and boundary data, i.e. $V^{m+1}(x,0)=0$
and $V^{m+1}(x,t)=0$ on $\{x_n=0\}$. It follows from \eqref{0531-1} that
\begin{align*}
\| \na V^{m+1}\|_{L^{q}((\R_+ \times (0, 1)) }+\| V^{m+1}\|_{L^{2q}((\R_+ \times (0, 1)) }\\
\leq c \norm{|V^mv^m|+|V^mv^{m-1}|+|V^mU|}_{L^{q}((\R_+ \times (0, 1))  }\\
\leq 3c A \norm{V^m}_{L^{2q}((\R_+ \times (0, 1)) }
<\frac12 \norm{V^m}_{L^{2q}((\R_+ \times (0, 1)) }.
\end{align*}
Therefore, $(v^m, \na v^m)$ converges to $(v, \na v)$ in $L^{2q}   (\R_+ \times (0, 1)) \times L^q   (\R_+ \times (0, 1))$ such that $v$
solves in the sense of distributions
\[
v_t-\Delta v+\nabla q=-{\rm div}\,\left(v\otimes v+v\otimes
U+v\otimes U+U\otimes U\right),
\]
\[
{\rm div} \, v =0
\]
with homogeneous initial and boundary data, i.e. $v(x,0)=0$ and
$v(x,t)=0$ on $\{x_n=0\}$. In addition, via \eqref{0531-1}, we have
\begin{align*}
\|\na  v\|_{L^{q}((\R_+ \times (0, 1)) }, \quad
 \|v \|_{L^{2q}((\R_+ \times (0, 1)) } \leq   A.
\end{align*}
Furthermore, if $ q > n+2$ and $\|w\|^2_{L^{2q}  } \leq  c \al$, it follows from  Proposition \ref{theoexternel-boundary} that
\begin{align}\label{0531}
\begin{split}
 \| v\|_{L^{\infty}(\R_+ \times (0, 2)) } &  \leq c
\||v|^2+|vU|+|U|^2\|_{L^{q}(\R_+ \times (0, 1)) }  \\
&\leq  2c\left(\|v\|^2_{L^{2q}((\R_+ \times (0, 1)) }+\|w\|^2_{L^{2q}  }\right)   \leq 4c A^2 <A.
\end{split}
\end{align}
 This complete the proof of Proposition \ref{theo1001}.
\qed

\section*{Acknowledgement}
T. Chang is supported by RS-2023-00244630. K. Kang is supported by RS-2024-00336346.

%\section*{Conflict of interest}
%On behalf of all authors, the corresponding author states that there is no conflict of interest.

%\begin{equation*}
%\left.
%\begin{array}{l}
%{ \mbox{Tongkeun Chang}}: \,{\mbox{chang7357@yonsei.ac.kr}}\\
%{\mbox{Kyungkeun Kang}}: \,{\mbox{kkang@yonsei.ac.kr }}
%\end{array}
%\right.
%\end{equation*}

\end{document}